\numberwithin{equation}{section}
\newtheoremstyle{thm}
{9pt}
{9pt}
{\itshape}
{}
{\bfseries}
{.}
{ }
{}
\theoremstyle{thm}
\newtheorem{theorem}{Theorem}[section]
\newtheorem{lemma}[theorem]{Lemma}
\newtheorem{corollary}[theorem]{Corollary}
\newtheorem{prop}[theorem]{Proposition}
\newtheoremstyle{def}
{9pt}
{9pt}
{}
{}
{\bfseries}
{.}
{ }
{}
\theoremstyle{def}
\newtheorem{remark}[theorem]{Remark}
\newtheorem{example}[theorem]{Example}
\def\cd{\stackrel{\mathcal{D}}{\longrightarrow}}
\newcommand{\C}{\mathbb{C}} 
\newcommand{\R}{\mathbb{R}} 
\newcommand{\N}{\mathbb{N}} 
\newcommand{\Sp}[1][d-1]{\mathcal{S}^{#1}} 
\newcommand{\Pb}{\mathbb{P}} 
\newcommand{\E}{\mathbb{E}} 
\newcommand{\standardsumme}[1][j]{\sum_{#1 = 1}^{n}}
\newcommand{\spheremax}{\underset{b \in \Sp}{\max}}
\newcommand{\skalarprodukt}[2]{{#1}^\top \vspace{-0.05cm} #2}
\newcommand{\unifsphere}{\mathcal{U}\left(\Sp\right)}
\newcommand{\harmonicspace}{\mathcal{H}_k(\Sp)}
\newcommand{\weakconv}{\overset{\mathcal{D}}{\longrightarrow}}
\newcommand{\skalarproduktLebesgue}[2]{\langle #1, #2 \rangle_{L^2}}
\newcommand{\limkappa}{\lim_{\kappa \rightarrow 0^+}}
\renewcommand{\footnoterule}{%
	\kern -3.5pt
	\hrule width \textwidth height 1pt
	\kern 3.5pt
}
\def\blfootnote{\xdef\@thefnmark{}\@footnotetext}
\begin{document}

\title{\bf A general maximal projection approach to uniformity testing on the hypersphere}


\author{Jaroslav Borodavka and Bruno Ebner}

\date{\today}
\maketitle

\blfootnote{ {\em MSC 2010 subject
classifications.} Primary 62G10 Secondary 62H15}
\blfootnote{
{\em Key words and phrases} uniformity tests, maximal projections, directional data, stochastic processes
in Banach spaces, contiguous alternatives, Bahadur efficiency, Monte Carlo simulations}

\begin{abstract}
We propose a novel approach to uniformity testing on the $d$-dimensional unit hypersphere $\mathcal{S}^{d-1}$ based on maximal projections. This approach gives a unifying view on the classical uniformity tests of Rayleigh and Bingham, and it links to measures of multivariate skewness and kurtosis. We derive the limiting distribution under the null hypothesis using limit theorems for Banach space valued stochastic processes and we present strategies to simulate the limiting processes by applying results on the theory of spherical harmonics. We examine the behavior under contiguous and fixed alternatives and show the consistency of the testing procedure for some classes of alternatives. For the first time in uniformity testing on the sphere, we derive local Bahadur efficiency statements. We evaluate the theoretical findings and empirical powers of the procedures in a broad competitive Monte Carlo simulation study and, finally,  apply the new tests to a data set on midpoints of large craters on the moon.
\end{abstract}

\section{Introduction}
\label{sec:Intro}
Testing uniformity on the circle, the sphere and the hypersphere $\Sp=\{x\in\R^d:\|x\|=1\}$, $d\in\N$, $d\ge2$, of $\R^d$, endowed with the Euclidean norm $\|x\|=\sqrt{x^\top x}$, are classical and still up-to-date research fields in directional statistics. Here and in the following, $^\top$ stands for the transpose of a matrix or a vector. We numerate just a small subset of fields, where data on the surface of the unit hypersphere $\Sp$ is applied: meteorology, geology, paleomagnetism, political sciences, text mining and wildfire orientation, for examples of such datasets, see \cite{LV:2019} and the contributions therein. The first step to serious statistical inference on $\Sp$ is to check whether or not a sample of unit vectors stems from the uniform law, since this distribution characterizes the absence of structure in directional data. To be specific, we model the observed data by independent identically distributed (iid.) column random vectors $U,U_1,\ldots,U_n$ taking values in $\Sp$. The testing problem of interest is whether or not the hypothesis
\begin{equation*}
H_0:\Pb^U=\unifsphere
\end{equation*}
holds, against general alternatives. Here, $\Pb^U$ stands for the distribution of $U$ and $\mathcal{U}(\cdot)$ denotes the uniform distribution. This problem has been extensively studied in the literature: Lord Rayleigh presented the first test of uniformity in \cite{R:1919} based on the norm of the arithmetic mean. Rayleigh's test was followed by circular tests based on the classical goodness-of-fit measures of Kolmogorov-Smirnov type in \cite{K:1960} and of Cram\'{e}r-von Mises type in \cite{W:1961}. Later, Bingham developed a test of uniformity in \cite{B:1974} based on the sample scatter matrix and Gin\'e, see \cite{G:1975}, introduced the so-called Sobolev-tests. We refer to \cite{JS:2001,MJ:2000} for more details on these tests and to \cite{JMV:2020} for some new developments. More recently, \cite{CCF:2009} proposed a Kolmogorov-Smirnov type test based on random projections, \cite{EHY:2018} suggest a procedure using powers of volumes of nearest-neighbor spheres, and \cite{GNC:2020} consider the Cram\'{e}r-von Mises counterpart to \cite{CCF:2009}. For details on this approach as well as more recent developments in uniformity testing of axial data see \cite{LV:2017}, chapter 6. The authors of the review article \cite{GV:2018} give an overview of uniformity tests on the hypersphere. Comparative Monte Carlo simulation studies are found in \cite{DFL:1985} for $d=3$ and for higher dimensions in \cite{F:2007}.

A well-known characterizing property of $\unifsphere$ is invariance with respect to rotations about the origin. Any test (say) $T_n$ of uniformity should therefore inherit this structure and as such be invariant under rotations, i.e.
\begin{equation}\label{eq:rotinv}
T_n(AU_1,\ldots,AU_n)=T_n(U_1,\ldots,U_n)\quad\mbox{holds for all}\;A\in \mbox{SO}(d),
\end{equation}
where $\mbox{SO}(d)$ is the $d$-dimensional rotation group, i.e., for $d\times d$-matrices $A\in\mbox{SO}(d)$ we have $AA^\top=A^\top A=I_d$ and $\mbox{det}(A)=1$. We denote the identity matrix by $I_d$, and $\mbox{det}(\cdot)$ is the notation for the determinant of a matrix. In the following, we call the property (\ref{eq:rotinv}) rotational invariance of the test statistic $T_n$.

We propose a novel class of statistics $T_{n,\, \beta}$ based on powers of maximal projections. In this spirit assume $U\sim\unifsphere$ and by \cite{B:1997}, we have using the rotational invariance of the uniform distribution and symmetry arguments for every $b\in\Sp$ and $\beta\in\N$
\begin{equation}\label{eq:ewert}
\psi_d(\beta)=E(b^\top U)^\beta=\left\{\begin{array}{cc}\displaystyle\frac{\Gamma\left((\beta+1)/2\right)\Gamma\left(d/2\right)}{\sqrt{\pi}\Gamma\left((\beta+d)/2\right)}, & \mbox{if}\,\beta\,\mbox{is even},\\ 0, & \mbox{if}\,\beta\,\mbox{is odd},\end{array}\right.
\end{equation}
where $\Gamma(\cdot)$ denotes the Gamma function. Hence $\psi_d(\beta)$ is independent of the choice of $b$, a property that likewise follows by the rotation invariance of the uniform law on the sphere. Next, we define the family of statistics
\begin{equation}\label{eq:test}
T_{n,\, \beta}=T_{n,\, \beta}(U_1,\ldots,U_n)=n\max_{b\in\Sp}\left(\frac1n\sum_{j=1}^n(b^\top U_j)^\beta-\psi_d(\beta)\right)^2,\quad \beta\in\N.
\end{equation}
It is obvious that $T_{n,\, \beta}$ is rotational invariant for every $\beta$ due to the rotational invariance of the maximum functional.

Interestingly, $T_{n,\, \beta}$ has close connections to well-known classical tests such as the Rayleigh test, the Bingham test and to measures of multivariate skewness and kurtosis by Malkovich and Afifi.
First, notice that with the sample mean of the observations $\overline{U}_n=\frac1n\sum_{j=1}^nU_j$ we have
\begin{equation*}
T_{n,1}=n\max_{b\in\Sp}\left(\frac1n\sum_{j=1}^n b^\top U_j\right)^2=n\|\overline{U}_n\|^2\max_{b\in\Sp}\left(b^\top\frac{\overline{U}_n}{\|\overline{U}_n\|}\right)^2=n\|\overline{U}_n\|^2,
\end{equation*}
since the scalar product in the maximum is the cosine of the angle between the two unit vectors, which takes its maximum for $b=\frac{\overline{U}_n}{\|\overline{U}_n\|}$. Hence we have an equivalent test as the classical Rayleigh test, see \cite{R:1919}, given by $R_n=2nd\|\overline{U}_n\|^2$.

Second, with the sample scatter matrix $S=\frac1n\sum_{j=1}^nU_jU_j^\top$ we have
\begin{equation*}
T_{n,2}=n\max_{b\in\Sp}\left(\frac1n\sum_{j=1}^n(b^\top U_j)^2-\frac1{d}\right)^2=n\max_{b\in\Sp}\left(b^\top S b-\frac1d\right)^2=n\max_{b\in\Sp}\left(b^\top \left(S-\frac1dI_d\right) b\right)^2,
\end{equation*}
Notice that $T_{n,2}$ is the squared spectral norm of $S-E(UU^\top)$ for $U\sim\unifsphere$, hence it compares the scatter matrix to the covariance matrix of $U$, which is in the same spirit as the Bingham test, see \cite{B:1974}. Note that by the Courant–Fischer–Weyl min-max principle from linear algebra, we have
 \begin{equation*}
T_{n,2}=n(\max(|\lambda_{min}|,|\lambda_{max}|))^2,
\end{equation*}
where $\lambda_{min}$ and $\lambda_{max}$ are the minimal and maximal eigenvalues of the symmetric matrix $S-\frac1dI_d$.

Third, we have
\begin{equation}\label{eq:Test3}
T_{n,3}=n\max_{b\in\Sp}\left(\frac1n\sum_{j=1}^n(b^\top U_j)^3\right)^2\quad \mbox{and}\quad T_{n,4}=n\max_{b\in\Sp}\left(\frac1n\sum_{j=1}^n(b^\top U_j)^4-\frac3{d(d+2)}\right)^2,
\end{equation}
which can be interpreted as analogs to the multivariate sample skewness and sample kurtosis by Malkovich and Afifi, for a definition see \cite{MA:1973}. For $T_{n,\, \beta}$, $\beta>2$, no explicit closed form and easy to calculate formula is known. The authors of \cite{MA:1973} suggest using the Newton-Raphson method to obtain a good approximation of the maximal value in (\ref{eq:test}). Since for such a numerical routine, the choice of some good start values is not straightforward, we suggest to use a random approach, see Section \ref{sec:Simu}, which is related to the idea of random projections as suggested in \cite{CCF:2009}.

The rest of the paper is organized as follows: We present asymptotic theory under the null hypothesis in Section \ref{sec:asy_null}. In Section \ref{sec:asy_cont} we derive the behaviour of $T_{n,\beta}$ for contiguous alternatives. We show consistency of the tests against some classes of fixed alternatives in Section \ref{sec:asy_fixed}. Afterwards, we establish local approximate and exact asymptotic relative efficiency statements in the Bahadur sense in Section \ref{sec:Bah}. We examine the theoretical findings by a Monte Carlo simulation study in Section \ref{sec:Simu} and provide a real data application to midpoints of large craters on the moon in Section \ref{sec:real_data}. Conclusions as well as an outlook are provided in Section \ref{sec:conclusions}. We finish the article by three Appendices \ref{sec:AppA}, \ref{sec:AppB} and \ref{sec:AppC} that contain facts on $d$-dimensional Legendre polynomials and spherical harmonics, as well as some technical Lemmas and proofs.

\section{Asymptotic null distribution of \texorpdfstring{$T_{n,\, \beta}$}{Lg}}
\label{sec:asy_null}
 Let $C(\Sp,\R)$ be the Banach space of continuous functions $f:\Sp\rightarrow\R$, equipped with the norm $\|f\|_\infty=\sup_{b\in\Sp}|f(b)|$. We introduce the stochastic process
\begin{equation*}
Z_{n,\beta}(b)=\sqrt{n}\left(\frac1n\sum_{j=1}^n(b^\top U_j)^\beta-\psi_d(\beta)\right),\quad b\in\Sp.
\end{equation*}
For the covariance structure in the following theorem, we write
\begin{align} \label{eta}
        \eta_\beta(b,c)
        = \E (\skalarprodukt{b}{U})^{\, \beta} (\skalarprodukt{c}{U})^{\, \beta}
        = \sum_{j=0}^\beta \frac{(c_{j,\, d}(\beta))^2}{\nu_d(j)} P_j^{\, d}(\skalarprodukt{b}{c}),\quad b,c\in\Sp.
\end{align}
Here, $P_j^{\, d}(\cdot)$ is the $d$-dimensional Legendre polynomial of order $j$, for a definition see \eqref{def_legendre}, $\nu_d(j)$ is the dimension of the space of $d$-dimensional spherical harmonics of order $j$, see \eqref{eq:dimsp}, and $c_{j,d}(\beta)$ are constants only depending on $j,d$ and $\beta$, compare with \eqref{potenzen_legendre} and Proposition \ref{c_j,d(m) allgemein}. An explicit way of calculation can be found in Appendix \ref{sec:AppB}.

\begin{theorem}\label{thm:asy_emp}
Let $U_1,\ldots,U_n$ be iid. with $U_1\sim\mathcal{U}\left(\Sp\right)$. For fixed $\beta\in\N$ there exists a centred Gaussian process $Z_\beta(b)$, $b\in\Sp$ with continuous sample paths and covariance kernel
\begin{equation}\label{eq:covk}
\rho_\beta(b,c)=\eta_\beta(b,c)-\psi_d^2(\beta),\quad b,c\in\Sp.
\end{equation}
Regarding $Z_\beta(\cdot)$ as a random element of $C(\Sp,\R)$, we have
\begin{equation*}
Z_{n,\beta}(\cdot)\stackrel{\mathcal{D}}{\longrightarrow}Z_{\beta}(\cdot).
\end{equation*}
\end{theorem}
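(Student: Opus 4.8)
The plan is to recognise $Z_{n,\beta}$ as a normalised sum of iid centred random elements of the Banach space $C(\Sp,\R)$ and to invoke a central limit theorem there. Concretely, set $X_j(b)=(\skalarprodukt{b}{U_j})^\beta-\psi_d(\beta)$ for $b\in\Sp$; each $X_j$ is a bona fide $C(\Sp,\R)$-valued random element since $b\mapsto(\skalarprodukt{b}{U_j})^\beta$ is continuous, and by \eqref{eq:ewert} we have $\E X_1=0$ as an element of $C(\Sp,\R)$. Then $Z_{n,\beta}=n^{-1/2}\sum_{j=1}^n X_j$, and the covariance kernel of the prospective limit is forced to be $\E[X_1(b)X_1(c)]=\E(\skalarprodukt{b}{U})^\beta(\skalarprodukt{c}{U})^\beta-\psi_d^2(\beta)=\eta_\beta(b,c)-\psi_d^2(\beta)=\rho_\beta(b,c)$ by the definition \eqref{eta}, which already matches \eqref{eq:covk}.

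First I would establish convergence of the finite-dimensional distributions: for any $b_1,\dots,b_m\in\Sp$ the vector $(Z_{n,\beta}(b_1),\dots,Z_{n,\beta}(b_m))$ is a normalised sum of iid centred bounded random vectors, so the classical multivariate central limit theorem together with the Cram\'er--Wold device yields convergence to the centred Gaussian law with covariance matrix $(\rho_\beta(b_i,b_k))_{i,k}$. The remaining and decisive step is tightness of $(Z_{n,\beta})_n$ in $C(\Sp,\R)$, for which the crucial ingredient is a uniform Lipschitz estimate on the increments: for all $b,c\in\Sp$ and every $u\in\Sp$,
\begin{equation*}
\big|(\skalarprodukt{b}{u})^\beta-(\skalarprodukt{c}{u})^\beta\big|\le\beta\,\max\!\big(|\skalarprodukt{b}{u}|,|\skalarprodukt{c}{u}|\big)^{\beta-1}\,\big|\skalarprodukt{(b-c)}{u}\big|\le\beta\,\|b-c\|,
\end{equation*}
since $|\skalarprodukt{b}{u}|,|\skalarprodukt{c}{u}|\le1$. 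Hence $|X_j(b)-X_j(c)|\le\beta\,\|b-c\|$ with a deterministic Lipschitz constant, and $\|X_1\|_\infty\le1+\psi_d(\beta)$ is bounded.

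With these two facts I would apply a Banach-space central limit theorem for $C(K)$-valued random elements of Jain--Marcus type (as found, e.g., in Ledoux--Talagrand): the uniform Lipschitz bound provides the required majorant with finite second moment, while the metric entropy of the compact $(d-1)$-dimensional manifold $\Sp$ satisfies $\log N(\Sp,\varepsilon)\le C+(d-1)\log(1/\varepsilon)$, so that $\int_0^1\sqrt{\log N(\Sp,\varepsilon)}\,\mathrm{d}\varepsilon<\infty$ and the entropy condition holds. This delivers the asymptotic equicontinuity (tightness) which, combined with the finite-dimensional convergence, gives the asserted weak convergence $Z_{n,\beta}\weakconv Z_\beta$ towards a centred Gaussian element of $C(\Sp,\R)$; continuity of the sample paths is then automatic from membership in $C(\Sp,\R)$, and the covariance is $\rho_\beta$. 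I expect the main obstacle to be precisely this tightness/equicontinuity verification in the infinite-dimensional setting, everything else being routine; the uniform Lipschitz estimate above is exactly what makes the entropy argument go through.
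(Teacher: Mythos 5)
Your proposal is correct and follows essentially the same route as the paper's proof: the same decomposition into iid centred $C(\Sp,\R)$-valued summands, the same Lipschitz bound $|X_j(b)-X_j(c)|\le\beta\|b-c\|$ (via the factorization of $x^\beta-y^\beta$ and Cauchy--Schwarz), and the same Jain--Marcus-type CLT under a metric entropy condition on $\Sp$ (the paper cites Corollary 7.17 of Araujo--Gin\'e, which is this very theorem), with the covariance identification being immediate from \eqref{eta}. The explicit finite-dimensional-distribution step you add is harmless but already subsumed in the Banach-space CLT.
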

\begin{remark} \label{rem:rho_beta_explicit}
For the special cases of Section \ref{sec:Intro} and some higher powers $\beta$, we have
\begin{eqnarray*}
\rho_1(b,c)&=&\frac{1}{d}(b^\top c),\\
\rho_2(b,c)&=&\frac{1}{d(d+2)}\left(2(b^\top c)^2+1\right)-\frac1{d^2},\\
\rho_3(b,c)&=&\frac{1}{d(d+2)(d+4)}\left(6(b^\top c)^3+9(b^\top c)\right),\\
\rho_4(b,c)&=&\left(24(b^\top c)^4+72(b^\top c)^2+9\right)\prod_{j=0}^3(d+2j)^{-1}-\frac9{d^2(d+2)^2},\\
\rho_5(b,c)&=&\left(120(b^\top c)^5+600(b^\top c)^3+225(b^\top c)\right)\prod_{j=0}^4(d+2j)^{-1},\\
\rho_6(b,c)&=&\left(720(b^\top c)^6+5400(b^\top c)^4+4050(b^\top c)^2+225\right)\prod_{j=0}^5(d+2j)^{-1}-\frac{225}{d^2(d+2)^2(d+4)^2},
\end{eqnarray*}
and thus explicit formulas for the covariance kernel in Theorem \ref{thm:asy_emp}.
\end{remark}
Note that the covariance kernel $\rho_\beta(b,c)$ solely depends on the scalar product $b^\top c$ and hence can be written as a function (say) $\rho_\beta(b,c)=Q(b^\top c)$, where $Q\in C([-1,1],\R)$ is a polynomial of degree $\beta$. Kernels of this particular structure are called zonal kernels, for an application of Gaussian processes with zonal covariance kernel in machine learning see \cite{DDH:2020}. The fact that $|\rho_\beta(b,c)|\le 1$ for all $\beta\in\N$ follows by the inequalities of Cauchy--Schwarz and Popoviciu, since the projections are bounded random variables.

Define the integral operators $K_\beta$ for $\beta\in\N$ given by
\begin{equation} \label{K_beta}
    K_\beta f(x) = \frac{1}{|\Sp|} \int_{\Sp} \rho_\beta(\omega, x) f(\omega) \, \text{d}\sigma(\omega), \quad x \in \Sp, \; f \in L^2(\Sp, \text{d}\sigma),
\end{equation}
where integration is with respect to the unique spherical Lebesgue measure $\sigma$ on $\Sp$. Since $\rho_\beta$ is continuous on a compact set of $\R^d$, the operator $K_\beta$ is compact from $L^2(\Sp, \text{d}\sigma)$ to $L^2(\Sp, \text{d}\sigma)$. Due to the zonal covariance structure we can even show that $K_\beta$ is a finite-rank operator, i.e., an operator whose range is finite-dimensional. The latter, and other properties, are presented and proved in the next proposition. In the following, we denote by $\mathcal{H}_k(\Sp)$ the space of $d$-dimensional spherical harmonic functions of order $k\in\N_0$, for a definition see \cite{G:1996}.

\begin{prop} \label{K_beta_properties}
Let $\beta \in \N$ and $K_\beta$ be defined as in \eqref{K_beta}.
\begin{enumerate}[label=\roman*)]
    \item For any spherical harmonic $\phi \in \mathcal{H}_k(\Sp)$ of order $k \in \N_0$, we have $K_\beta \phi = \lambda_k \phi$, where
    \begin{align} \label{eq:eigenvalues}
    \lambda_k
    = \begin{cases}
        \begin{alignedat}{2}
            & \hspace{0.5cm} \left( \frac{c_{k,\, d}(\beta)}{\nu_d(k)} \right)^2   &,\quad& \textrm{for} \; 0 < k \leq \beta,\\[0.25cm]
	        & \hspace{1.25cm} 0                                                     &,\quad& \textrm{for} \; k = 0 \textrm{ or } k > \beta,
    \end{alignedat}
	\end{cases}
    \end{align}
    with constants $c_{k, \, d}(\beta) \in \R$, depending only on $k, d$ and $\beta$, compare with Proposition \ref{c_j,d(m) allgemein}, and $\nu_d(k) = \textrm{dim}(\mathcal{H}_k(\Sp))$.
    \item $K_\beta$ is a finite-rank operator.
    \item The spectrum of $K_\beta$ consists of $0$ and the eigenvalues in \eqref{eq:eigenvalues}.
    \item $K_\beta$ is positive, i.e., we have $\left\langle K_\beta f, f \right\rangle_{L^2} \geq 0$ for all $f \in L^2(\Sp, d\sigma).$
\end{enumerate}
\end{prop}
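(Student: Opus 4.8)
The plan is to exploit the zonal structure of $\rho_\beta$ together with the reproducing and orthogonality properties of the Legendre polynomials. First I would record the decisive simplification. By \eqref{potenzen_legendre} one has the expansion $t^\beta=\sum_{j=0}^\beta c_{j,d}(\beta)P_j^{\,d}(t)$, and since for $U\sim\unifsphere$ the map $\omega\mapsto P_j^{\,d}(b^\top\omega)$ lies in $\mathcal{H}_j(\Sp)$, we have $\E[P_j^{\,d}(b^\top U)]=0$ for $j\ge1$ and $\E[P_0^{\,d}(b^\top U)]=1$. Taking expectations in the expansion of $(b^\top U)^\beta$ therefore yields $\psi_d(\beta)=c_{0,d}(\beta)$. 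Because $\nu_d(0)=1$ and $P_0^{\,d}\equiv1$, the $j=0$ summand of $\eta_\beta$ in \eqref{eta} equals exactly $\psi_d^2(\beta)$, so from \eqref{eq:covk} the constant term cancels and
\[
\rho_\beta(b,c)=\sum_{j=1}^\beta\frac{(c_{j,d}(\beta))^2}{\nu_d(j)}\,P_j^{\,d}(b^\top c),\qquad b,c\in\Sp .
\]
This is the representation I would work with throughout.

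For part i) I would invoke the addition theorem for spherical harmonics, which for an $L^2(\Sp,\mathrm{d}\sigma)$-orthonormal basis $\{Y_{j,1},\dots,Y_{j,\nu_d(j)}\}$ of $\mathcal{H}_j(\Sp)$ reads $\sum_{l=1}^{\nu_d(j)}Y_{j,l}(x)Y_{j,l}(\omega)=\frac{\nu_d(j)}{|\Sp|}P_j^{\,d}(x^\top\omega)$. Two consequences drive the computation: (a) for fixed $x$, the function $\omega\mapsto P_j^{\,d}(x^\top\omega)$ belongs to $\mathcal{H}_j(\Sp)$, hence is $L^2$-orthogonal to every $\phi\in\mathcal{H}_k(\Sp)$ with $k\neq j$; and (b) inserting the addition theorem and using orthonormality gives the reproducing identity $\frac{1}{|\Sp|}\int_{\Sp}P_k^{\,d}(x^\top\omega)\phi(\omega)\,\mathrm{d}\sigma(\omega)=\frac{1}{\nu_d(k)}\phi(x)$ for $\phi\in\mathcal{H}_k(\Sp)$. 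Substituting the displayed form of $\rho_\beta$ into \eqref{K_beta} and using the symmetry $\rho_\beta(\omega,x)=\rho_\beta(x,\omega)$, only the $j=k$ term survives when $1\le k\le\beta$, producing the factor $\frac{(c_{k,d}(\beta))^2}{\nu_d(k)}\cdot\frac{1}{\nu_d(k)}=\left(\frac{c_{k,d}(\beta)}{\nu_d(k)}\right)^2$, i.e.\ the eigenvalue in \eqref{eq:eigenvalues}; for $k=0$ (each $j\ge1$ term integrates to zero against a constant) and for $k>\beta$ every summand vanishes, giving $\lambda_k=0$.

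Parts ii)--iv) then follow with little extra effort. Since $\{Y_{j,l}\}_{j\ge0,\,l}$ is a complete orthonormal system of $L^2(\Sp,\mathrm{d}\sigma)$ and each element is an eigenfunction by i), the range of $K_\beta$ is contained in $\bigoplus_{j=1}^\beta\mathcal{H}_j(\Sp)$, which is finite-dimensional; hence $K_\beta$ is finite-rank, which is ii). The kernel $\rho_\beta$ is real and symmetric, so $K_\beta$ is self-adjoint and compact, and having diagonalised it against a complete orthonormal basis I conclude that its spectrum is precisely $\{0\}$ together with the nonzero eigenvalues listed in \eqref{eq:eigenvalues}, which is iii). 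Finally, each eigenvalue is a square and thus nonnegative; expanding $f=\sum_{j,l}\widehat f_{j,l}\,Y_{j,l}$ yields $\left\langle K_\beta f,f\right\rangle_{L^2}=\sum_{j,l}\lambda_j|\widehat f_{j,l}|^2\ge0$, proving positivity iv). Alternatively, iv) is immediate from the fact that $\rho_\beta$ is the covariance kernel of the Gaussian process in Theorem \ref{thm:asy_emp} and is therefore positive semidefinite.

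I expect the only genuinely delicate point to be the bookkeeping of normalisation constants, namely pinning down the factor $|\Sp|$ in the addition theorem and the reproducing identity so that the eigenvalue comes out as $(c_{k,d}(\beta)/\nu_d(k))^2$ rather than off by a factor of $\nu_d(k)$ or $|\Sp|$. The entire conceptual content sits in part i); once the cancellation of the $j=0$ term and the reproducing property are established, parts ii)--iv) are routine consequences of the spectral theorem for compact self-adjoint operators.
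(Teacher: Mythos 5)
Your proof is correct, and it reaches the same eigenvalue formula, but it is organized around a different key fact than the paper's proof, so a comparison is worthwhile. The paper applies the Funk--Hecke Theorem \ref{funk-hecke} to the whole zonal kernel $Q$ at once, which reduces $\lambda_k$ to the one-dimensional integral $\frac{|\Sp[d-2]|}{|\Sp|}\int_{-1}^1 P_k^{\,d}(t)Q(t)(1-t^2)^{(d-3)/2}\,\mathrm{d}t$; this is then evaluated with the weighted orthogonality of Legendre polynomials (Proposition \ref{legendre_orthogonal}), giving $\lambda_k=\left(c_{k,\,d}(\beta)/\nu_d(k)\right)^2-\delta_{k0}\psi_d^2(\beta)$, and only at the very end is the identity $c_{0,\,d}(\beta)=\psi_d(\beta)$ established (again via Funk--Hecke) to conclude $\lambda_0=0$. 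You instead prove $c_{0,\,d}(\beta)=\psi_d(\beta)$ \emph{first}, by taking expectations in \eqref{potenzen_legendre} and using that spherical harmonics of positive order integrate to zero against constants; this cancels the constant term of $\rho_\beta$ at the outset, and the eigenvalue relation then falls out term by term from the reproducing identity $\frac{1}{|\Sp|}\int_{\Sp}P_k^{\,d}(x^\top\omega)\phi(\omega)\,\mathrm{d}\sigma(\omega)=\frac{1}{\nu_d(k)}\phi(x)$, which you derive from the addition theorem (Theorem \ref{def_legendre}) and which is precisely Funk--Hecke specialized to $\Lambda=P_k^{\,d}$. The two routes are mathematically equivalent, but yours avoids all explicit integrals over $[-1,1]$ and makes the vanishing of $\lambda_0$ transparent from the start, while the paper's keeps a single tool (Funk--Hecke) doing all the work, consistent with its use elsewhere. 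A second genuine difference is in iii): the paper verifies that any $\lambda\notin\{0\}\cup\{\lambda_k\}$ satisfies ``$\lambda f-K_\beta f=0\Rightarrow f=0$'' and then declares $\lambda$ a resolvent point, a step that tacitly invokes the Fredholm alternative to pass from injectivity to invertibility; your appeal to the spectral theorem for compact self-adjoint operators (self-adjointness coming from the real symmetric kernel) gets the same conclusion without that gap. Your two arguments for iv) are both fine; the eigenvalue expansion is the one the paper uses, and the alternative via positive semidefiniteness of the covariance kernel would need a short density/approximation step to cover general $f\in L^2(\Sp,\mathrm{d}\sigma)$, but is standard.
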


In the spirit of \cite{BH:1991}, we thus have alternative representations of the limiting Gaussian process for our special cases.

\begin{prop} \label{pro:AltD}
Let $\nu_d(k)$ be the dimension of the space of $d$-dimensional spherical harmonics of order $k\in\N$, see \eqref{eq:dimsp}.
\begin{enumerate}[label=\roman*)]
\item If $\beta$ is odd, the limiting Gaussian process $Z_\beta(b)$, $b\in\Sp$, can be represented in the form
\begin{equation*}
Z_\beta(b) = \sqrt{|\Sp|} \sum_{\substack{ k=1 \\ k \textrm{ odd}}}^\beta \sqrt{\lambda_k} \sum_{j=1}^{\nu_d(k)} \phi_{k,j}(b) N_{k,j}, \quad b \in \Sp.
\end{equation*}
Here, $N_{k,j}$, $k=1,3,\ldots$ and $j=1,2,\ldots,\nu_d(k)$, is an array of independent unit normal variables, $\lambda_k$ is the eigenvalue in \eqref{eq:eigenvalues}, and $\varphi_{k,j}$ are $j=1,2,\ldots,\nu_d(k)$ linearly independent surface harmonics of degree $k$ being orthonormal with respect to $\sigma/|\Sp|$, compare with the proof of Proposition \ref{K_beta_properties}, ii).

\item If $\beta$ is even, the limiting Gaussian process $Z_\beta(b)$, $b\in\Sp$, can be represented in the form
\begin{equation*}
Z_\beta(b) = \sqrt{|\Sp|} \sum_{\substack{ k=1 \\ k \textrm{ even}}}^\beta \sqrt{\lambda_k} \sum_{j=1}^{\nu_d(k)} \phi_{k,j}(b) N_{k,j}, \quad b \in \Sp.
\end{equation*}
Here, $N_{k,j}$, $k=0,2,4,\ldots$ and $j=1,2,\ldots,\nu_d(k)$, is an array of independent unit normal variables, $\lambda_k$ is the eigenvalue in \eqref{eq:eigenvalues}, and $\varphi_{k,j}$ are $j=1,2,\ldots,\nu_d(k)$ linearly independent spherical harmonics of degree $k$ being orthonormal with respect to $\sigma/|\Sp|$, compare with the proof of Proposition \ref{K_beta_properties}, ii).
\end{enumerate}
\end{prop}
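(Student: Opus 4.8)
The plan is to read the two displayed formulas as Karhunen--Lo\`eve expansions of the limiting process $Z_\beta$ and then to identify the law by the elementary fact that a centred Gaussian random element of $C(\Sp,\R)$ is determined by its covariance kernel alone. The starting point is the Mercer-type expansion of $\rho_\beta$ that is already contained in Proposition \ref{K_beta_properties}. Let $\phi_{k,j}$, $j=1,\ldots,\nu_d(k)$, be an orthonormal basis of $\mathcal{H}_k(\Sp)$ diagonalising $K_\beta$; normalising them in $L^2(\sigma)$, the addition theorem for spherical harmonics gives $\sum_{j=1}^{\nu_d(k)}\phi_{k,j}(b)\phi_{k,j}(c)=\frac{\nu_d(k)}{|\Sp|}P_k^{\,d}(b^\top c)$. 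Combining this with the eigenvalue formula \eqref{eq:eigenvalues} and the definition \eqref{eta} of $\eta_\beta$, I obtain the finite expansion
\[
\rho_\beta(b,c)=|\Sp|\sum_{k=1}^\beta \lambda_k\sum_{j=1}^{\nu_d(k)}\phi_{k,j}(b)\phi_{k,j}(c),\qquad b,c\in\Sp,
\]
in which the $k=0$ summand is absent because $\lambda_0=0$; this vanishing is precisely the subtraction of $\psi_d^2(\beta)$ in \eqref{eq:covk}.

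Next I would introduce the candidate process $\widetilde Z_\beta(b)=\sqrt{|\Sp|}\sum_{k=1}^{\beta}\sqrt{\lambda_k}\sum_{j=1}^{\nu_d(k)}\phi_{k,j}(b)\,N_{k,j}$, where $(N_{k,j})$ is an array of independent standard normal variables. Because $K_\beta$ is finite-rank (Proposition \ref{K_beta_properties} ii)), this is a finite linear combination; since each $\phi_{k,j}$ is the restriction to $\Sp$ of a polynomial and hence continuous, $\widetilde Z_\beta$ has continuous sample paths and is a genuine random element of $C(\Sp,\R)$. Using the independence and unit variance of the $N_{k,j}$ together with the addition-theorem identity above, a direct computation of its covariance returns exactly $\rho_\beta(b,c)$; the prefactor $\sqrt{|\Sp|}$ is what cancels the $1/|\Sp|$ produced by the addition theorem.

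It then remains to split the two parities. Here I would record that the coefficients $c_{k,d}(\beta)$ appearing in \eqref{eta} vanish whenever $k\not\equiv\beta\pmod 2$, because the monomial $t^\beta$ expands into $d$-dimensional Legendre polynomials of the same parity only (compare with \eqref{potenzen_legendre}); consequently $\lambda_k=0$ for such $k$, and the sum over $k$ collapses to the odd indices $1,3,\ldots,\beta$ when $\beta$ is odd and to the even indices $2,4,\ldots,\beta$ when $\beta$ is even. Finally, both $Z_\beta$ (from Theorem \ref{thm:asy_emp}) and $\widetilde Z_\beta$ are centred Gaussian elements of $C(\Sp,\R)$ sharing the covariance kernel $\rho_\beta$, so they have the same distribution, which is the claimed representation.

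The step I expect to demand the most care is bookkeeping rather than conceptual: keeping the normalisation of the $\phi_{k,j}$ consistent between the eigenvalue problem for $K_\beta$, which is posed against $\sigma/|\Sp|$, and the addition theorem, which is most naturally stated for an $L^2(\sigma)$-orthonormal basis, so that the constant in front of the series emerges as $\sqrt{|\Sp|}$ rather than some other power of $|\Sp|$. The appeal to uniqueness of centred Gaussian laws with a prescribed covariance is standard, and the finite-rank property of $K_\beta$ disposes of any convergence questions that a general Karhunen--Lo\`eve argument would raise.
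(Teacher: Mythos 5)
Your proof is correct, and its overall architecture coincides with the paper's: introduce the candidate process, check that it is a centred Gaussian random element of $C(\Sp,\R)$ with continuous paths, compute its covariance using the independence of the $N_{k,j}$, and conclude from the fact that a centred Gaussian law is determined by its covariance kernel, with the parity vanishing $c_{k,\,d}(\beta)=0$ for $k\not\equiv\beta\pmod 2$ collapsing the sum to indices of the same parity as $\beta$. The genuine difference is the lemma used to identify the resulting sum with $\rho_\beta$: the paper appeals to Mercer's theorem for the operator $K_\beta$ (citing \cite{SCH:2010}, Theorem 2.10), whereas you obtain the finite expansion
\begin{equation*}
\rho_\beta(b,c)=|\Sp|\sum_{k=1}^{\beta}\lambda_k\sum_{j=1}^{\nu_d(k)}\phi_{k,j}(b)\,\phi_{k,j}(c),\qquad b,c\in\Sp,
\end{equation*}
directly from the addition theorem (Theorem \ref{def_legendre}), the eigenvalue formula \eqref{eq:eigenvalues}, and the identity $c_{0,\,d}(\beta)=\psi_d(\beta)$ of Proposition \ref{c_j,d(m) allgemein}, which makes the cancellation of the $k=0$ term against $\psi_d^2(\beta)$ in \eqref{eq:covk} explicit. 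Your route is more elementary and self-contained: since $\rho_\beta$ is a polynomial of degree $\beta$ in $b^\top c$, no abstract spectral result is needed, and the finite-rank structure disposes of all convergence questions. It also resolves the normalisation point you flag in the right way: the displayed representation with the prefactor $\sqrt{|\Sp|}$ holds when the $\phi_{k,j}$ are orthonormal in $L^2(\Sp,\text{d}\sigma)$ (as in Remark \ref{rem:AltD_explicit_cases} and in the proof of Proposition \ref{K_beta_properties} ii)), not with respect to $\sigma/|\Sp|$ as the proposition's wording suggests; your choice is the one under which the stated formula, and the paper's own covariance computation, are correct. What Mercer buys the paper is a one-line identification; what your computation buys is independence from an external theorem and an explicit check of the constant in front of the series.
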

Proposition \ref{pro:AltD} shows an easy way to simulate Gaussian random processes on the sphere with a polynomial covariance kernel. What is essentially needed are three ingredients: the positive eigenvalues (which can be calculated explicitly), an array of independent unit normal variables and an implementation of spherical harmonics, see Section \ref{sec:Simu} for more details. For a generation method of a suitable basis of spherical harmonics, see \cite{AH:2012}, Section 2.11, or \cite{DX:2013}, Theorem 1.1.9. The package \texttt{HFT.m} in \texttt{Mathematica}, see \cite{Mathematica}, provides a direct way to calculate an orthonormal basis of spherical harmonics in any dimension $d$ and any order $k$ based on Theorem 5.25 in \cite{ABW:2001}.  Note that explicit versions of orthonormal systems up to order 4 in any dimensions can be found in \cite{MQ:2001}, Tables 1 and 2.
\begin{remark} \label{rem:AltD_explicit_cases}
\begin{itemize}
    \item Case $\beta = 1$: We have $\nu_d(1) = d$ and $u \mapsto u_k \in \mathcal{H}_1(\Sp)$ for all $k = 1,\ldots,d$, where $u = (u_1,\ldots,u_d) \in \Sp$. These functions form an orthogonal system of $\mathcal{H}_1(\Sp)$, see \cite{G:1996}, Lemma 3.2.3. Normalization w.r.t. $\sigma$ yields the orthonormal basis functions
    \begin{equation*}
        u \mapsto \sqrt{\frac{d}{|\Sp|}} u_k \in \mathcal{H}_1(\Sp), \quad k = 1,\ldots,d.
    \end{equation*}
    We have a single positive eigenvalue $\lambda_1 = 1/d^2$. With Proposition \ref{pro:AltD} it follows
    \begin{equation*}
        Z_1(u) = \frac{1}{\sqrt{d}} \sum_{j=1}^d u_j N_j, \quad u \in \Sp, \quad N_1,\ldots,N_d \overset{\text{uiv}}{\sim} \mathcal{N}(0,1).
    \end{equation*}
    Moreover, putting $N = (N_1,\ldots,N_d) \sim \mathcal{N}_d(0,I_d)$ the Cauchy--Schwarz Inequality yields
    \begin{equation*}
        Z_1^2(u) = \frac{1}{d} \sum_{j,k=1}^d u_j u_k N_j N_k = \frac{1}{d} (\skalarprodukt{u}{N})^2 \leq \frac{1}{d} \| N \|^2.
    \end{equation*}
    Hence we obtain
    \begin{equation*}
        d \max_{u \in \Sp} Z_1^2(u) = \| N \|^2 \sim \chi_{d}^2,
    \end{equation*}
    thus recovering the limit result of the Rayleigh-Test.
    \item Case $\beta = 2$: By \eqref{eq:dimsp} it is $\nu_d(2) = (d+2)(d-1)/2$. Straightforward calculations yield the single positive eigenvalue
    \begin{equation*}
        \lambda_2 = \left( \frac{2}{d(d+2)} \right)^2.
    \end{equation*}
    Let $\phi_{2,1},\ldots, \phi_{2,\nu_d(2)}$ be an orthonormal basis of $\mathcal{H}_2(\Sp)$, see \cite{MQ:2001}, Table 1, for an explicit representation, and set $ \phi_2(u) = ( \phi_{2,1}(u),\ldots, \phi_{2,\nu_d(2)}(u) )$, $u \in \Sp$. With Proposition \ref{def_legendre} and \ref{legendre_schranke} it follows that
    \begin{equation*}
        \| \phi_2(u) \|^2 = \sum_{i=1}^{\nu_d(2)} \phi_{2,i}(u) \phi_{2,i}(u) = \frac{\nu_d(2)}{|\Sp|} P_2^{\, d}(\skalarprodukt{u}{u}) = \frac{\nu_d(2)}{|\Sp|}, \quad u \in \Sp.
    \end{equation*}
    Therefore, putting $N = (N_1,\ldots,N_{\nu_d(2)}) \sim \mathcal{N}_{\nu_d(2)}\left( 0,I_{\nu_d(2)} \right)$, gives us again using the Cauchy--Schwarz Inequality
    \begin{equation*}
        Z_2^2(u) = \frac{4 |\Sp|}{d^2(d+2)^2} \left( \sum_{j=1}^d \phi_{2,j}(u) N_j \right)^2 \leq \frac{4\nu_d(2)}{d^2(d+2)^2} \| N \|^2 = \frac{2(d-1)}{d^2(d+2)} \| N \|^2.
    \end{equation*}
    Since $\| N \|^2 \sim \chi_{\nu_d(2)}^2$, a comparison of 95\% quantiles of $2(d-1)\| N \|^2/(d^2(d+2))$ with Table \ref{tab:table_crit_values} shows that this upper bound is only a good approximation for $d=2$.
\end{itemize}
In the following we give a list of the non-null eigenvalues $\lambda_k$, $k=0,\ldots,\beta$, in \eqref{eq:eigenvalues} corresponding to higher values of $\beta$.
\begin{itemize}
    \item Case $\beta = 3$: $\lambda_1 = \left( 3/(d(d+2)) \right)^2$ and $\lambda_3 = \left( 6/(d(d+2)(d+4)) \right)^2$.
    \item Case $\beta = 4$: $\lambda_2 = \left( 12/(d(d+2)(d+4)) \right)^2$ and $\lambda_4 = \left( 24/(d(d+2)(d+4)(d+6)) \right)^2$.
    \item Case $\beta = 5$: $\lambda_1 = \left( 15/(d(d+2)(d+4)) \right)^2$, $\lambda_3 = \left( 60/(d(d+2)(d+4)(d+6)) \right)^2$, and $\lambda_5 = \left( 120/(d(d+2)(d+4)(d+6)(d+8)) \right)^2$.
    \item Case $\beta = 6$: $\lambda_2 = \left( 90/(d(d+2)(d+4)(d+6)) \right)^2$, $\lambda_4 = \left( 360/(d(d+2)(d+4)(d+6)(d+8)) \right)^2$, as well as $\lambda_6 = \left( 720/(d(d+2)(d+4)(d+6)(d+8)(d+10)) \right)^2$.
\end{itemize}
\end{remark}
Since $\Sp$ is compact, a direct application of the continuous mapping theorem and Theorem \ref{thm:asy_emp} prove the following Corollary to Theorem \ref{thm:asy_emp}.
\begin{corollary}\label{cor:H0}
Let $U_1,\ldots,U_n$ be iid. with $U_1\sim\unifsphere$. Then we have
\begin{equation*}
T_{n,\, \beta}\weakconv\spheremax Z^2_\beta(b),
\end{equation*}
where $Z_\beta(\cdot)$ is the limiting Gaussian process of Theorem \ref{thm:asy_emp}.
\end{corollary}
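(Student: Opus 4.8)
The plan is to recognize $T_{n,\, \beta}$ as the image of the process $Z_{n,\beta}$ under a single fixed continuous functional on $C(\Sp,\R)$ and then transport the weak convergence of Theorem \ref{thm:asy_emp} through the continuous mapping theorem. First I would record the elementary identity $Z_{n,\beta}^2(b) = n\big(\frac1n\sum_{j=1}^n(b^\top U_j)^\beta-\psi_d(\beta)\big)^2$, which gives
\begin{equation*}
T_{n,\, \beta}=\spheremax Z_{n,\beta}^2(b)=\left(\sup_{b\in\Sp}|Z_{n,\beta}(b)|\right)^2=\|Z_{n,\beta}\|_\infty^2,
\end{equation*}
where the supremum is attained as a maximum because $\Sp$ is compact and $Z_{n,\beta}$ has continuous paths. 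This exhibits $T_{n,\, \beta}=\Phi(Z_{n,\beta})$ for the functional $\Phi\colon C(\Sp,\R)\to\R$, $\Phi(f)=\|f\|_\infty^2$.

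Next I would verify that $\Phi$ is continuous. It factors as $\Phi=q\circ\|\cdot\|_\infty$, where $q(t)=t^2$ is continuous on $\R$ and the norm map $f\mapsto\|f\|_\infty$ is $1$-Lipschitz on $C(\Sp,\R)$ by the reverse triangle inequality; hence $\Phi$ is continuous on all of $C(\Sp,\R)$, so no exceptional discontinuity set intersecting the support of the limit needs to be ruled out.

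Finally, since Theorem \ref{thm:asy_emp} provides $Z_{n,\beta}(\cdot)\weakconv Z_\beta(\cdot)$ as random elements of $C(\Sp,\R)$, the continuous mapping theorem yields $\Phi(Z_{n,\beta})\weakconv\Phi(Z_\beta)$, that is,
\begin{equation*}
T_{n,\, \beta}\weakconv\|Z_\beta\|_\infty^2=\spheremax Z_\beta^2(b),
\end{equation*}
which is the assertion. The only points requiring care are the identification of $T_{n,\, \beta}$ with $\|Z_{n,\beta}\|_\infty^2$ and the continuity of $\Phi$, both of which are elementary; accordingly I expect no substantive obstacle. The compactness of $\Sp$ is what guarantees simultaneously that the sup-norm functional is finite-valued and continuous, and that the limiting maximum $\spheremax Z_\beta^2(b)$ is well defined and attained.
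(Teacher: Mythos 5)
Your proposal is correct and follows exactly the paper's argument: the paper proves the corollary by noting that $T_{n,\,\beta}=\max_{b\in\Sp}Z_{n,\beta}^2(b)$ and invoking the continuous mapping theorem together with Theorem \ref{thm:asy_emp}, using compactness of $\Sp$. You have merely spelled out the details (the identification $T_{n,\,\beta}=\|Z_{n,\beta}\|_\infty^2$ and the continuity of $f\mapsto\|f\|_\infty^2$) that the paper leaves implicit.
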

The resulting limit random variable in Corollary \ref{cor:H0} is not of pure theoretic interest, since the distribution and hence the asymptotic critical value can be approximated, see Section \ref{sec:Simu}.

\section{Contiguous alternatives}
\label{sec:asy_cont}
In this section, we consider a triangular array $U_{n1},\ldots,U_{nn}$ of rowwise identically independent distributed random vectors on $\Sp$ having the density function $f_n(x)=\mu(x)\left(1+h(x)/\sqrt{n}\right),$ $x\in\Sp,$ where $\mu(\cdot)$ denotes the density of the uniform distribution with respect to the spherical Lebesgue measure $\sigma$, and $h$ is a bounded measurable function satisfying $\int_{\Sp}h(x)\mu(x)\,\mbox{d}\sigma(x)=0$. We consider $n$ large enough to assure the non-negativity of $f_n$.

First, define
\begin{eqnarray*}
\mathbb{P}^{(n)}=\bigotimes_{j=1}^n(\mu \; \sigma)&\text{and}& \mathbb{A}^{(n)}=\bigotimes_{j=1}^n(f_n \; \sigma)
\end{eqnarray*}
on the measurable space $(\mathfrak{X}_n,\mathfrak{B}_n)=\displaystyle{\bigotimes_{j=1}^n(\Sp,\mathcal{M})}$, where $\mathcal{M}$ denotes the class of subsets of $\Sp$ that are measurable with respect to $\sigma$. Further, denote the likelihood ratio with $L_n=\frac{d\mathbb{A}^{(n)}}{d\mathbb{P}^{(n)}}$ and write
\begin{equation*}
S_\beta: \Sp\times \Sp\rightarrow \R,\quad (a,b)\mapsto S_\beta(a,b)=(a^\top b)^{\, \beta}-\psi_d(\beta),
\end{equation*}
where $\psi_d(\cdot)$ is defined in (\ref{eq:ewert}).
\begin{theorem}\label{thm:ben_alt}
Under the standing assumptions we have for the triangular array $U_{n1},\ldots,U_{nn}$
\begin{equation*}
Z_{n,\beta}(\cdot)\stackrel{\mathcal{D}}{\longrightarrow}Z_\beta(\cdot)+S^*_\beta(\cdot) \quad \text{under} \; \; \mathbb{A}^{(n)}
\end{equation*}
in $C(\Sp,\R)$, where $Z_\beta(\cdot)$ is a centred Gaussian process in $C(\Sp,\R)$ having covariance kernel $\rho_\beta$ from Theorem \ref{thm:asy_emp}. The shift function $S^*_\beta(\cdot)$ is given by
\begin{equation}\label{ShiftparaC}
S^*_\beta(b)=\frac1{|\Sp|}\int_{\Sp}S_\beta(u,b)h(u)\;\mbox{d}\sigma(u),\;\;b\in\Sp.
\end{equation}
\end{theorem}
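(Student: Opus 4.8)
The plan is to combine the convergence under the null hypothesis from Theorem \ref{thm:asy_emp} with Le Cam's third lemma. Since weak convergence in $C(\Sp,\R)$ is governed by convergence of the finite-dimensional distributions together with tightness, I would (i) establish the joint asymptotic behaviour of $Z_{n,\beta}$ and the log-likelihood ratio $\log L_n$ under $\mathbb{P}^{(n)}$, (ii) invoke the third lemma to read off the finite-dimensional limits under $\mathbb{A}^{(n)}$, and (iii) transfer the tightness already available under the null to the contiguous sequence $\mathbb{A}^{(n)}$.

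For the first step, write $\log L_n=\sum_{j=1}^n\log\!\big(1+h(U_{nj})/\sqrt n\big)$. Because $h$ is bounded, the Taylor expansion $\log(1+x)=x-\tfrac12x^2+O(x^3)$ yields, under $\mathbb{P}^{(n)}$,
\[
\log L_n=\frac1{\sqrt n}\sum_{j=1}^n h(U_{nj})-\frac1{2n}\sum_{j=1}^n h^2(U_{nj})+o_{\mathbb P}(1).
\]
Using $\int_{\Sp}h\,\mu\,\mbox{d}\sigma=0$, the central limit theorem and the law of large numbers give $\log L_n\cd\mathcal N(-\tau^2/2,\tau^2)$ with $\tau^2=|\Sp|^{-1}\int_{\Sp}h^2\,\mbox{d}\sigma$, and Le Cam's first lemma then shows that $\big(\mathbb{A}^{(n)}\big)$ and $\big(\mathbb{P}^{(n)}\big)$ are mutually contiguous.

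Next I would fix $b_1,\dots,b_m\in\Sp$ and apply the multivariate central limit theorem to the bounded iid summands underlying $\big(Z_{n,\beta}(b_1),\dots,Z_{n,\beta}(b_m),\tfrac1{\sqrt n}\sum_j h(U_{nj})\big)$; the Lindeberg condition is immediate from boundedness. The only genuinely new computation is the cross-covariance, which, using $\mu\equiv|\Sp|^{-1}$ together with $\E S_\beta(U,b)=0$ and $\E h(U)=0$, evaluates to
\[
\mathrm{Cov}_{\mathbb{P}^{(n)}}\!\Big(Z_{n,\beta}(b),\tfrac1{\sqrt n}\sum_{j=1}^n h(U_{nj})\Big)=\frac1{|\Sp|}\int_{\Sp}S_\beta(u,b)\,h(u)\,\mbox{d}\sigma(u)=S^*_\beta(b).
\]
Since the quadratic term in the expansion of $\log L_n$ converges to the constant $\tau^2/2$, it does not affect this covariance, so $(Z_{n,\beta}(b_1),\dots,Z_{n,\beta}(b_m),\log L_n)$ converges under $\mathbb{P}^{(n)}$ to a jointly Gaussian vector whose first block is $(Z_\beta(b_1),\dots,Z_\beta(b_m))$ and whose covariance with the last coordinate is $(S^*_\beta(b_1),\dots,S^*_\beta(b_m))$. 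Le Cam's third lemma then yields that, under $\mathbb{A}^{(n)}$, these finite-dimensional distributions converge to those of $Z_\beta+S^*_\beta$, the mean shift being exactly the covariance with the limiting log-likelihood.

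It remains to pass from finite-dimensional convergence to convergence in $C(\Sp,\R)$. The map $b\mapsto S^*_\beta(b)$ is continuous, since $S_\beta(u,\cdot)$ is a polynomial in the coordinates of $b$ and $h$ is bounded, so dominated convergence applies; hence $Z_\beta+S^*_\beta$ is a bona fide random element of $C(\Sp,\R)$. The tightness of $\big(Z_{n,\beta}\big)$ in $C(\Sp,\R)$ under $\mathbb{P}^{(n)}$ was established in the proof of Theorem \ref{thm:asy_emp}, and by contiguity the same family is tight under $\mathbb{A}^{(n)}$. Combining tightness with the finite-dimensional convergence gives the asserted weak convergence. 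I expect the functional step to be the main obstacle: one must make sure the tightness obtained under the null really carries over to $\mathbb{A}^{(n)}$ and that the finite-dimensional form of the third lemma suffices once tightness is secured, whereas the covariance identification, although it is the computational heart, is routine.
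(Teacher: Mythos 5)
Your proposal is correct and follows essentially the same route as the paper's proof: asymptotic normality of $\log L_n$ (via Taylor expansion and the CLT) plus Le Cam's first lemma for mutual contiguity, a joint finite-dimensional CLT identifying the cross-covariance $\mathrm{Cov}(Z_{n,\beta}(b),\log L_n)\to S^*_\beta(b)$, Le Cam's third lemma for the shifted finite-dimensional limits under $\mathbb{A}^{(n)}$, and transfer of tightness from $\mathbb{P}^{(n)}$ to $\mathbb{A}^{(n)}$ by contiguity. Your additional remarks (continuity of $S^*_\beta$, the quadratic term of the expansion not affecting the covariance) merely make explicit what the paper subsumes under ``straightforward calculations.''
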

As a direct consequence of Theorem \ref{thm:ben_alt} and the continuous mapping theorem we have the following Corollary.
\begin{corollary}
Under the conditions of Theorem \ref{thm:ben_alt}, we have
\begin{equation*}
T_{n,\, \beta}\stackrel{\mathcal{D}}{\longrightarrow} \max_{b\in\Sp}\left(Z_\beta(b)+S^*_\beta(b)\right)^2 \quad \text{under} \; \; \mathbb{A}^{(n)}.
\end{equation*}
\end{corollary}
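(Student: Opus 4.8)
The plan is to recognize the statistic $T_{n,\beta}$ as a fixed continuous functional applied to the process $Z_{n,\beta}$, and then to invoke the continuous mapping theorem together with Theorem \ref{thm:ben_alt}. Concretely, I would first pull the factor $n$ inside the square to write
\[
T_{n,\beta} = n \max_{b\in\Sp}\left(\frac1n\sum_{j=1}^n (b^\top U_j)^\beta - \psi_d(\beta)\right)^2 = \max_{b\in\Sp} Z_{n,\beta}^2(b),
\]
so that, defining the functional $\Phi:C(\Sp,\R)\to\R$ by $\Phi(f)=\max_{b\in\Sp}f^2(b)$, we have $T_{n,\beta}=\Phi(Z_{n,\beta})$. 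Since $\Sp$ is compact and every $f\in C(\Sp,\R)$ attains its extrema, $\Phi(f)=\|f\|_\infty^2$. This is the same functional already used to pass from Theorem \ref{thm:asy_emp} to Corollary \ref{cor:H0}.

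The second step is to verify that $\Phi$ is continuous on $(C(\Sp,\R),\|\cdot\|_\infty)$. This is routine: the norm $f\mapsto\|f\|_\infty$ is $1$-Lipschitz by the reverse triangle inequality, and $t\mapsto t^2$ is continuous on $\R$, so $\Phi$ is continuous as a composition. It is essentially the only point requiring any verification, and it is elementary; in particular, since $\Phi$ is continuous everywhere, the everywhere-continuous version of the continuous mapping theorem suffices and no "almost sure continuity of the limit'' hypothesis is needed.

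Finally, Theorem \ref{thm:ben_alt} provides $Z_{n,\beta}(\cdot)\cd Z_\beta(\cdot)+S^*_\beta(\cdot)$ under $\Al^{(n)}$ in $C(\Sp,\R)$. Here I would record that the limiting object is genuinely a random element of $C(\Sp,\R)$: the process $Z_\beta$ has continuous sample paths by Theorem \ref{thm:asy_emp}, and the shift $S^*_\beta$ in \eqref{ShiftparaC} is a continuous function of $b$, since $b\mapsto S_\beta(u,b)=(u^\top b)^\beta-\psi_d(\beta)$ is continuous and bounded uniformly in $u$, whence dominated convergence renders the integral continuous in $b$. The continuous mapping theorem for weak convergence in the separable Banach space $C(\Sp,\R)$ then applies to $\Phi$ and yields $\Phi(Z_{n,\beta})\cd\Phi(Z_\beta+S^*_\beta)$ under $\Al^{(n)}$, which is precisely $T_{n,\beta}\cd\max_{b\in\Sp}(Z_\beta(b)+S^*_\beta(b))^2$. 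I expect no genuine obstacle in this corollary: the entire content resides in the distributional convergence of Theorem \ref{thm:ben_alt}, and the remaining argument is the standard squared-sup-norm continuous mapping step.
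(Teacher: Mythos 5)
Your proposal is correct and is exactly the paper's argument: the paper states this corollary as a direct consequence of Theorem \ref{thm:ben_alt} and the continuous mapping theorem, which is precisely your identification $T_{n,\beta}=\Phi(Z_{n,\beta})$ with $\Phi(f)=\|f\|_\infty^2$ applied to the convergence $Z_{n,\beta}\stackrel{\mathcal{D}}{\longrightarrow}Z_\beta+S^*_\beta$ under $\mathbb{A}^{(n)}$. Your additional checks (continuity of $\Phi$ and of the shift $S^*_\beta$) are sound elaborations of what the paper leaves implicit.
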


\begin{example} \label{bsp_cont_alt}
As an example we consider the alternatives where
\begin{equation*}
h(x)=h_{m,\theta}(x)=P_m^{\, d}(\theta^\top x),\quad x\in\Sp,\;m\ge1,
\end{equation*}
is the Legendre polynomial of degree $m$ and $\theta\in\Sp$ is fixed. Note that $x\mapsto P_m^{\, d}(\theta^\top x)$ is a spherical harmonic function of degree $m$ such that the orthogonality property of spherical harmonics, see \cite{G:1996}, Section 3.2, implies
\begin{equation*}
\int_{\Sp} h(x)\mbox{d}\sigma(x)=\int_{\Sp} P_m^{\, d}(\theta^\top x)P_0^{\, d}(\theta^\top x)\; \mbox{d}\sigma(x)=0,
\end{equation*} since $P_0^{\, d}(\theta^\top\cdot)=1$ is the spherical harmonic of degree 0. If $X$ follows the law given by the density $f_n$ we have for any orthogonal $d\times d$-matrix $A$ with $A\theta=\theta$ that the distribution of $AX$ is the same as the distribution of $X$, hence these types of alternatives are rotationally symmetric about $\theta$. An application of the Funk/Hecke-Theorem \ref{funk-hecke} shows for $b\in\Sp$
\begin{eqnarray*}
S^*_\beta(b)&=&\frac1{|\Sp|}\int_{\Sp}S_\beta(u,b)h(u)\;\mbox{d}\sigma(u)\\
&=&\frac1{|\Sp|}\int_{\Sp}\left((b^\top u)^\beta-\psi_d(\beta)\right)P_m^{\, d}(\theta^\top u)\;\mbox{d}\sigma(u)\\
&=&\lambda_d(\beta,m)P_m^{\, d}(\theta^\top b),
\end{eqnarray*}
where
\begin{equation*}
\lambda_d(\beta,m)=\frac{|\Sp[d-2]|}{|\Sp|}\int_{-1}^1P_m^{\, d}(t) \left(t^\beta-\psi_d(\beta)\right)(1-t^2)^{\frac{d-3}{2}}\mbox{d}t.
\end{equation*}
It follows with \eqref{potenzen_legendre} and Proposition \ref{legendre_orthogonal}
\begin{align*}
    \lambda_d(\beta, m)
    = \frac{|\Sp[d-2]|}{|\Sp|} \left[ \sum_{j=0}^\beta c_{j, \, d}(\beta) \langle P_m^{\, d}, P_j^{\, d} \rangle - \psi_d(\beta) \langle P_m^{\, d}, P_0^{\, d} \rangle \right]
    = \frac{c_{m, \, d}(\beta)}{\nu_d(m)},
\end{align*}
so that
\begin{equation} \label{shift_legendre}
    S_\beta^*(b) = \frac{c_{m, \, d}(\beta)}{\nu_d(m)} P_m^{\, d}(\skalarprodukt{\theta}{b}), \quad b \in \Sp.
\end{equation}
Note that $\lambda_d(\beta, m) = 0$, if $\beta + m$ is odd or $m > \beta$, because then the coefficients $c_{m, \, d}(\beta)$ in \eqref{potenzen_legendre} equal zero. Hence, in these cases we have the same asymptotic behaviour under contiguous alternatives as under the null hypothesis. We can conclude that the tests $T_{n,\, \beta}$ are not able to detect the alternatives for such a combination of $\beta$ and $m$. For the shift function we have $S_\beta^*(\theta) = c_{m, \, d}(\beta)/\nu_d(m)$, so that there is a non-negative shift in the limiting distribution under contiguous alternatives as long as we can show that the coefficients $c_{m, \, d}(\beta)$ are non-negative. We conjecture that this is indeed the case, as all examples after Proposition fulfill this property. That in turn means that, under the assumption of this conjecture, there is a positive shift if $\beta + m$ is even and $m \leq \beta$. Thus, $T_{n,\, \beta}$ is a family of testing procedures which is able to detect the contiguous alternatives. As indicated in \cite{CPV:2017}, Section 2, the famous von Mises--Fisher distribution (see \cite{MJ:2000}, Section 9.3, for a definition) with mean direction $\theta$ and concentration parameter $\kappa\ge0$ falls into a comparable class of contiguous alternatives. We expect to see matchable power performances of $T_{n,\, \beta}$ in the simulation study, see Section \ref{sec:Simu}.
\end{example}

\section{Consistency}
\label{sec:asy_fixed}
In this short section we consider spherical random vectors $U,U_1,\ldots,U_n$ with a distribution having a continuous density $f$ w.r.t. the spherical Lebesgue measure $\sigma$. We adopt the reasoning in \cite{BH:1991} to argue that the considered tests are consistent against a large class of alternatives. If for $\beta\in\N$ there is a unit vector (say) $b_0\in\Sp$ such that
\begin{equation*}
\zeta(b_0)=\left(\E(b_0^\top U)^\beta-\psi_d(\beta)\right)^2>0,
\end{equation*}
the strong law of large numbers shows
\begin{equation*}
\lim_{n\rightarrow\infty}\zeta_n(b_0)=\lim_{n\rightarrow\infty}\left(\frac1n\sum_{j=1}^n(b_0^\top U_j)^\beta-\psi_d(\beta)\right)^2=\zeta(b_0)\quad \mbox{a.s.}
\end{equation*}
and since $T_{n,\, \beta}/n\ge \zeta_n(b_0)$ we have
\begin{equation*}
\lim_{n\rightarrow\infty}T_{n,\, \beta}=\infty \quad \mbox{a.s.}
\end{equation*}
This reasoning shows that the tests $T_{n,\, \beta}$ are consistent against each such alternative. Nonetheless, as we have already seen in the last section, $T_{n, \, \beta}$ is not consistent against any arbitrary alternative class. For certain combinations of $\beta$ and $m$, the order of the Legendre polynomial, $T_{n, \, \beta}$ exhibits the same asymptotic behaviour under the alternatives as under the null hypothesis. Another indication for the inconsistency of $T_{n, \, \beta}$ can be seen in the case $\beta = 1$, which essentially concerns the Rayleigh test. The authors of \cite{PPV:2021} have shown that, in the rather general context of rotationally symmetric alternatives with a location and concentration parameter and a defining angular function, the Rayleigh test is blind against certain local alternatives. These local alternatives show polynomial decrease of the concentration parameter towards zero (hence yielding the null hypothesis) and the odd-order derivatives of their angular function vanish at zero. An example of such an alternative is the well-known Watson distribution.

\section{Bahadur efficiencies}
\label{sec:Bah}

In this section we present some interesting insights into the Bahadur asymptotic relative efficiencies (ARE) of the statistics $(T_{n, \, \beta})_{\beta \in \N}$. For an elaborate and comprehensive introduction to the concept of Bahadur efficiency we refer the reader to \cite{BAH:1967} and \cite{N:1995}.

We consider alternative classes whose defining density $f(\cdot \, | \, \kappa)$ w.r.t. $\sigma$ is parameterized through a non-negative number $\kappa \geq 0$, where the uniform distribution on $\Sp$ is only obtained for the limit case $\kappa \to 0^+$ in $L^1(\Sp, \text{d}\sigma)$, i.e.
\begin{equation} \label{density_L1_conv}
    \limkappa || f(\cdot \, | \, \kappa) - f(\cdot \, | \, 0) ||_{L^1} = \limkappa \int_{\Sp} \left| f(x \, | \, \kappa) - f(x \, | \, 0) \right| \, \text{d}\sigma(x) = 0.
\end{equation}
Hence, the testing problem can be reformulated as
\begin{equation} \label{null_hypothesis_kappa}
    H_0: \kappa = 0 \text{ against } H_1: \kappa > 0.
\end{equation}
In order to properly apply the Bahadur theory, we consider the family of equivalent test statistics $\widetilde{T}_{n, \, \beta} = \sqrt{T_{n, \, \beta}}$, $\beta \in \N$. In the following we mainly focus our attention to the local approximate and the local exact Bahadur ARE. For two statistics $T_n^{(1)}$ and $T_n^{(2)}$ these are defined as follows. \\
The local approximate Bahadur ARE is given by
\begin{equation*}
    \Lambda_{T^{(1)}, T^{(2)}}^{\text{a}} =
    \underset{\kappa \to0^+}{\lim} \frac{c_{T^{(1)}}^{\; a}(\kappa)}{c_{T^{(2)}}^{\; a}(\kappa)},
\end{equation*}
where $c_T^{\; a}$ denotes the approximate Bahadur slope of a statistic $T_n$, see \cite{N:1995}, page 10. \\
The local exact Bahadur ARE is defined by
\begin{equation*}
    \Lambda_{T^{(1)}, T^{(2)}}^{\text{ex}} =
    \underset{\kappa \to0^+}{\lim} \frac{c_{T^{(1)}}(\kappa)}{c_{T^{(2)}}(\kappa)},
\end{equation*}
where $c_T$ denotes the exact Bahadur slope of a statistic $T_n$, see \cite{N:1995}, Section 1.2. In many cases the approximate and exact Bahadur slopes coincide in the proximity of the null hypothesis, i.e. in the limit case. This can also be observed for $(\widetilde{T}_{n, \, \beta})_{\beta \in \N}$ in the next proposition.

\begin{prop} \label{T_n,beta_bahadur_steigung}
Let $\beta \in \N$. The approximate Bahadur slope of $\widetilde{T}_{n, \, \beta}$ is given by
\begin{equation*}
    c_{\widetilde{T}_\beta}^{\; a}(\kappa)
    = \frac{\spheremax \gamma_\kappa^2(b)}{\spheremax \rho_\beta(b,b)}
    = \frac{\spheremax \gamma_\kappa^2(b)}{\sum_{j=1}^\beta \lambda_j \nu_d(j)}, \quad \kappa > 0,
\end{equation*}
with the eigenvalues $\lambda_j$ from Proposition \ref{K_beta_properties}, $\nu_d(j)$ as in \eqref{eq:dimsp} and
\begin{equation*}
    \gamma_\kappa(b) = \E_\kappa (\skalarprodukt{b}{U})^{\, \beta} - \psi_d(\beta), \quad b \in \Sp,
\end{equation*}
where $U \sim f(\cdot \, | \, \kappa)$, $\kappa > 0$. Furthermore, the exact Bahadur slope of $\widetilde{T}_{n, \, \beta}$ is for sufficiently small $\kappa > 0$ given by
\begin{equation*}
    c_{\widetilde{T}_\beta}(\kappa)
    = \frac{\spheremax \gamma_\kappa^2(b)}{\sum_{j=1}^\beta \lambda_j \nu_d(j)} + o \left( \spheremax \gamma_\kappa^2(b) \right).
\end{equation*}
\end{prop}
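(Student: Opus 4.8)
The plan is to apply the standard machinery of Bahadur's theory as in \cite{N:1995}, which for each slope requires two ingredients: the almost-sure limit of the normalised statistic under the alternative, and a tail (respectively large-deviation) estimate under $H_0$. I would begin with the common ingredient, writing $\widetilde{T}_{n,\beta}/\sqrt{n} = \max_{b\in\Sp}|\frac1n\sum_{j=1}^n(b^\top U_j)^\beta - \psi_d(\beta)|$ and showing that this converges $\Pb_\kappa$-a.s.\ to $\max_{b\in\Sp}|\gamma_\kappa(b)| = \sqrt{\max_b\gamma_\kappa^2(b)}$. Since $b\mapsto(b^\top u)^\beta$ is a polynomial family indexed by the compact set $\Sp$, the multinomial expansion $(b^\top u)^\beta=\sum_{|\alpha|=\beta}\binom{\beta}{\alpha}b^\alpha u^\alpha$ reduces the maximum to a continuous function of finitely many empirical moments, so a uniform strong law of large numbers and the continuity of the max functional yield the limit $b_{\widetilde{T}_\beta}(\kappa)^2=\max_b\gamma_\kappa^2(b)$.

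For the approximate slope I would combine this with the tail behaviour of the limiting null law from Corollary \ref{cor:H0}: the statistic converges to $\|Z_\beta\|_\infty=\max_b|Z_\beta(b)|$, and for a centred Gaussian process one has $\lim_{t\to\infty}-2t^{-2}\log\Pb(\|Z_\beta\|_\infty>t)=1/\sup_b\rho_\beta(b,b)$, the upper bound coming from the Borell--TIS inequality and the matching lower bound from a single-point Gaussian tail. The approximate-slope relation $c^a=a_T\,b_T^2$ then gives the first displayed expression. The remaining identity $\sup_b\rho_\beta(b,b)=\sum_{j=1}^\beta\lambda_j\nu_d(j)$ is pure algebra: $\rho_\beta(b,b)=\eta_\beta(b,b)-\psi_d^2(\beta)$ is in fact constant in $b$ since $P_j^d(1)=1$, and subtracting the $j=0$ term $(c_{0,d}(\beta))^2/\nu_d(0)=\psi_d^2(\beta)$ (using $c_{0,d}(\beta)=\psi_d(\beta)$ and $\nu_d(0)=1$) leaves exactly $\sum_{j=1}^\beta(c_{j,d}(\beta))^2/\nu_d(j)=\sum_{j=1}^\beta\lambda_j\nu_d(j)$ by \eqref{eq:eigenvalues}.

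For the exact slope the extra ingredient is a large-deviation principle for $\widetilde{T}_{n,\beta}/\sqrt{n}$ under $\Pb^{(n)}$, which I would again reduce to a finite-dimensional problem: writing the statistic as $\Phi(\hat{m}_n)$ with $\hat{m}_n$ the vector of empirical moments of order $\beta$ and $\Phi(m)=\max_b|P(b;m)|$ a fixed continuous function, Cramér's theorem for the bounded i.i.d.\ moment vectors together with the contraction principle yields an LDP with rate $g(t)=\inf\{\Lambda^*(m):\Phi(m)\ge t\}$. Then $c_{\widetilde{T}_\beta}(\kappa)=2g(b_{\widetilde{T}_\beta}(\kappa))$, and since $b_{\widetilde{T}_\beta}(\kappa)\to0$ as $\kappa\to0^+$ — a consequence of \eqref{density_L1_conv} and the boundedness of the projections — only the local behaviour of $g$ near $0$ matters.

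The crux is showing $g(t)=t^2/(2\sup_b\rho_\beta(b,b))+o(t^2)$. Here I would use the quadratic expansion $\Lambda^*(m)\approx\frac12(m-m_0)^\top\Sigma^{-1}(m-m_0)$ about the uniform moment vector $m_0$, linearise via $P(b;m)-P(b;m_0)\approx\langle v_b,m-m_0\rangle$ with $v_b^\top\Sigma v_b=\mathrm{Var}(g(b))=\rho_\beta(b,b)$, and solve the quadratic program $\min\frac12\delta^\top\Sigma^{-1}\delta$ subject to $\max_b|\langle v_b,\delta\rangle|\ge t$, whose optimum is $t^2/(2\sup_b\rho_\beta(b,b))$; substituting back gives the stated expansion. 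I expect this final step — making the local quadratic expansion of the Cramér rate function rigorous and controlling the error uniformly over $b$ so that it survives the supremum — to be the main obstacle, whereas the reduction to empirical moments is what makes the LDP tractable at all.
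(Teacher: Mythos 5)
Your proposal is correct in substance and, for the \emph{approximate} slope, coincides with the paper's argument: the paper also works inside Bahadur's framework (\cite{BAH:1971}, Theorem 7.2, cf. \cite{BAH:1967}), obtains the a.s.\ limit $\spheremax |\gamma_\kappa(b)|$ under the alternative from the strong law, and gets the null tail rate $-1/\bigl(2\spheremax \rho_\beta(b,b)\bigr)$ for $\spheremax |Z_\beta(b)|$ by citing \cite{LT:2002}, Corollary 3.2 --- which is exactly the Borell--TIS upper bound plus one-point Gaussian lower bound you propose to prove by hand; the closing algebra $\spheremax \rho_\beta(b,b)=\sum_{j=1}^\beta\lambda_j\nu_d(j)$ is also identical. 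Where you genuinely diverge is the \emph{exact} slope. The paper does not construct an LDP itself: it applies \cite{RAO:1972}, Lemmas 2.1 and 2.2, to the bounded, centred $C(\Sp,\R)$-valued elements $W_j(b)=(b^\top U_j)^\beta-\psi_d(\beta)$, which directly yields $\lim_n n^{-1}\log\Pb_0\bigl(\|\tfrac1n\sum_{j}W_j\|_\infty\ge\epsilon\bigr)=-\epsilon^2/\bigl(2\spheremax\rho_\beta(b,b)\bigr)+o(\epsilon^2)$, i.e.\ precisely the local quadratic behaviour of the large-deviation rate that you plan to extract from finite-dimensional Cram\'er plus the contraction principle. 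Your route is more self-contained and exploits the polynomial structure; it is in fact simpler than you suggest, because $m\mapsto P(b;m)$ is affine and $P(b;m_0)=0$ under uniformity, so the ``linearisation'' $P(b;m)-P(b;m_0)=\langle v_b,m-m_0\rangle$ is \emph{exact}, leaving only the quadratic expansion of the Cram\'er rate function to control. Two caveats you should address when writing it up: (i) the covariance matrix $\Sigma$ of the monomial vector is singular, since degree-$\beta$ monomials satisfy linear relations on $\Sp$ (e.g.\ $\sum_i u_i^2=1$), so $\Sigma^{-1}$ must be replaced by the pseudo-inverse and the minimisation restricted to the affine support of the moment vector, where $\Lambda^*$ is finite; (ii) you must verify that for small $t$ the infimum defining $g(t)$ is attained at distance $O(t)$ from $m_0$ (convexity of $\Lambda^*$ and its quadratic lower bound near the mean give this), so that the pointwise $o(\|\delta\|^2)$ error genuinely becomes $o(t^2)$ after taking the infimum. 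With these handled, your argument delivers the same conclusion; what the paper's citation of \cite{RAO:1972} buys is skipping the finite-dimensional reduction entirely, at the cost of invoking a ready-made Banach-space large-deviation result.
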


The Bahadur slopes of $\widetilde{T}_{n, \, \beta}$ apparently coincide locally, so that we do not distinguish between them anymore. In the following, we consider some explicit alternative classes and determine the local Bahadur ARE of $\widetilde{T}_{n, \, \beta}$ w.r.t. $M_n = -2 \log(\Lambda_n)$, where $\Lambda_n$ is the likelihood-ratio test. It is well-known that the exact and approximate Bahadur slope of $M_n$ are the same and are given by
\begin{equation*}
    c_M^{\; a}(\kappa) = c_M(\kappa) = 2 \text{KL}(\kappa, 0), \quad \kappa > 0,
\end{equation*}
where
\begin{equation*}
    \text{KL}(\kappa, \kappa_0) = \E_\kappa \left[ \log \left( \frac{f(U \, | \, \kappa)}{f(U \, | \, \kappa_0} \right) \right], \quad \kappa, \kappa_0 \geq 0,
\end{equation*}
is the Kullback--Leibler information number for $U \sim f(\cdot \, | \, \kappa)$.
The proof for the following quite technical calculations can be found in Appendix \ref{sec:AppB}.

\begin{example} \label{bsp_bahadur_vMF}
A random vector $U$ with values in $\Sp$ has a von Mises--Fisher distribution
 $\text{vMF}(\theta, \kappa)$ with mean direction $\theta \in \Sp$ and concentration parameter $\kappa \geq 0$ if the density w.r.t. $\sigma$ is given by
\begin{equation} \label{von_mises_fisher_dichte}
    f(x \, | \, \kappa) = \frac{\left( \kappa/2 \right)^{d/2-1}}{2 \pi^{d/2} I_{\frac{d}{2}-1}(\kappa)} \exp(\kappa \skalarprodukt{x}{\theta}), \quad x \in \Sp,
\end{equation}
where $I_{\frac{d}{2}-1}$ is the modified Bessel function of the first kind and order $d/2-1$, see \eqref{bessel_def}. In case of the von Mises--Fisher alternative class $\text{vMF}(\theta, \kappa)$ with a fixed mean direction $\theta \in \Sp$ and $\kappa > 0$ we have
\begin{equation*}
    \lim_{\kappa \to0^+} \frac{\max_{b \in \Sp} \gamma_\kappa^2(b)}{2 \textsc{KL}(\kappa, 0)} = \lambda_1 \nu_d(1),
\end{equation*}
whereby the local Bahadur ARE is
\begin{equation*}
    \Lambda_{\widetilde{T}_\beta, M}^{\text{ex}}
    = \Lambda_{\widetilde{T}_\beta, M}^{\text{a}}
    = \underset{\kappa \to0^+}{\lim} \frac{c_{\widetilde{T}_\beta}^{\; a}(\kappa)}{c_M^{\; a}(\kappa)}
    = \frac{1}{\sum_{j=1}^\beta \lambda_j \nu_d(j)} \; \underset{\kappa \to0^+}{\lim} \frac{\max_{b \in \Sp} \gamma_\kappa^2(b)}{2 \text{KL}(\kappa, 0)}
    = \frac{\lambda_1 \nu_d(1)}{\sum_{j=1}^\beta \lambda_j \nu_d(j)}.
\end{equation*}
The special case of $\beta = 1$ yields the local asymptotic optimality of $\widetilde{T}_1$ in the Bahadur sense (see \cite{N:1995}, page 9, for this concept)
\begin{equation*}
    \Lambda_{\widetilde{T}_1, \text{M}}^{\text{ex}} = \Lambda_{\widetilde{T}_1, \text{M}}^{\text{a}} = 1.
\end{equation*}
This is not surprising since the Rayleigh test is exactly the likelihood-ratio test in the von Mises--Fisher model. On the contrary, if $\beta$ is even, then
\begin{equation*}
     \Lambda_{\widetilde{T}_\beta, \text{M}}^{\text{ex}} = \Lambda_{\widetilde{T}_\beta, \text{M}}^{\text{a}} = 0,
\end{equation*}
since the eigenvalue $\lambda_1$ equals zero in this case.
\end{example}

\begin{example} \label{bsp_bahadur_watson}
A random vector $U$ with values in $\Sp$ has a Watson distribution
 $\text{W}(\theta, \kappa)$ with mean direction $\theta \in \Sp$ and concentration parameter $\kappa \in \R$ if the density w.r.t. $\sigma$ is given by
\begin{equation} \label{watson_dichte}
    f(x \, | \, \kappa) = \frac{\Gamma(d/2)}{2 \pi^{d/2} M(1/2, d/2, \kappa)} \exp(\kappa (\skalarprodukt{x}{\theta})^2), \quad x \in \Sp,
\end{equation}
where $M(\cdot, \cdot, \cdot)$ is the Kummer function, see \ref{kummer_def}. In the following, we only consider the case $\kappa \geq 0$. In case of the Watson alternative class $\text{W}(\theta, \kappa)$ with a fixed mean direction $\theta \in \Sp$ and $\kappa > 0$ we have
\begin{equation*}
    \lim_{\kappa \to0^+} \frac{\max_{b \in \Sp} \gamma_\kappa^2(b)}{2 \textsc{KL}(\kappa, 0)} = \lambda_2 \nu_d(2).
\end{equation*}

\begin{table}[t] \label{table_bahadur}
\small
\centering
\begin{tabular}{>{\hspace{2mm}}c<{\hspace{2mm}}|r|rrrr>{\hspace{6mm}}c<{\hspace{2mm}}|r|rrrrr}
    \toprule[0.5mm]
    \midrule
        & \backslashbox{$\beta$}{$d$}
        & 2    & 3    & 5    & 10   &                   & \backslashbox{$\beta$}{$d$}                                                                                                                 & 2    & 3    & 5    & 10   \\
        \midrule
vMF     & 1     & 1.00 & 1.00 & 1.00 & 1.00 &   LP$_1$  & 1    & 1.00 & 1.00 & 1.00 & 1.00 \\
        & 3     & 0.90 & 0.84 & 0.77 & 0.70 &           & 3    & 0.90 & 0.84 & 0.77 & 0.70 \\
        & 5     & 0.79 & 0.67 & 0.54 & 0.41 &           & 5    & 0.79 & 0.67 & 0.54 & 0.41 \\
        \midrule
W       & 2     & 1.00 & 1.00 & 1.00 & 1.00 &   LP$_2$  & 2     & 1.00 & 1.00 & 1.00 & 1.00 \\
        & 4     & 0.94 & 0.92 & 0.89 & 0.84 &           & 4     & 0.94 & 0.92 & 0.89 & 0.84 \\
        & 6     & 0.86 & 0.80 & 0.72 & 0.61 &           & 6     & 0.86 & 0.80 & 0.72 & 0.61 \\
        \midrule
LP$_3$  & 3     & 0.10 & 0.16 & 0.23 & 0.30 &   LP$_4$  & 4     & 0.06 & 0.08 & 0.11 & 0.16 \\
        & 5     & 0.20 & 0.31 & 0.43 & 0.54 &           & 6     & 0.14 & 0.19 & 0.27 & 0.37 \\
        \midrule
LP$_5$  & 5     & 0.01 & 0.02 & 0.03 & 0.06 &   LP$_6$  & 6     & 0.004 & 0.01 & 0.01 & 0.02 \\
    \midrule
    \bottomrule[0.5mm]
\end{tabular}
\caption{Non-trivial local Bahadur ARE of $\widetilde{T}_{n,\, \beta}$ w.r.t. $M_n$ for the alternative classes of the examples \ref{bsp_bahadur_vMF} to \ref{bsp_bahadur_LP} with dimension $d \in \{ 2, 3, 5, 10 \}$ and order $m \in \{ 1, \ldots, 6 \}$ of the LP alternative class.}
\end{table}
\noindent
Thus the local Bahadur ARE equals
\begin{equation*}
    \Lambda_{\widetilde{T}_\beta, M}^{\text{ex}}
    = \Lambda_{\widetilde{T}_\beta, M}^{\text{a}}
    = \frac{\lambda_2 \nu_d(2)}{\sum_{j=1}^\beta \lambda_j \nu_d(j)}.
\end{equation*}
This time the special case of $\beta = 2$ yields the local asymptotic optimality of $\widetilde{T}_2$ in the Bahadur sense
\begin{equation*}
    \Lambda_{\widetilde{T}_2, \text{M}}^{\text{ex}} = \Lambda_{\widetilde{T}_2, \text{M}}^{\text{a}} = 1,
\end{equation*}
whereas if $\beta$ is odd, then
\begin{equation*}
     \Lambda_{\widetilde{T}_\beta, \text{M}}^{\text{ex}} = \Lambda_{\widetilde{T}_\beta, \text{M}}^{\text{a}} = 0.
\end{equation*}
\end{example}

\begin{example} \label{bsp_bahadur_LP}
In concordance with Example \ref{bsp_cont_alt} we shall define the alternative class
$\text{LP}_m(\theta, \kappa)$ of order $m \in \N$ with direction $\theta \in \Sp$ and $\kappa \in [0,1]$. LP stands for Legendre polynomial in this context. Let this class be given by the density
\begin{equation} \label{LP_dichte}
    f(x \, | \, \kappa) = \frac{1}{|\Sp|} \left( 1 + \kappa P_m^{\, d}(\skalarprodukt{\theta}{x}) \right), \quad x \in \Sp.
\end{equation}
For fixed order $m$ and direction $\theta$ we have
\begin{equation*}
    \lim_{\kappa \to0^+} \frac{\max_{b \in \Sp} \gamma_\kappa^2(b)}{2 \textsc{KL}(\kappa, 0)} = \lambda_m \nu_d(m),
\end{equation*}
so that the local Bahadur ARE equals
\begin{equation*}
    \Lambda_{\widetilde{T}_\beta, \text{M}}^{\text{ex}} = \Lambda_{\widetilde{T}_\beta, \text{M}}^{\text{a}} = \frac{\lambda_m \nu_d(m)}{\sum_{j=1}^\beta \lambda_j \nu_d(j)}.
\end{equation*}
We obtain non-trivial local Bahadur AREs only for combinations of $\beta$ and $m$, where $m \leq \beta$ and $\beta + m$ is even. In particular, the special case $\beta = m = 1$ or $\beta = m = 2$ gives the local asymptotic optimality of $\widetilde{T}_1$ or $\widetilde{T}_2$, respectively, in the Bahadur sense.
\end{example}

\section{Simulations}
\label{sec:Simu}

We present a competitive Monte-Carlo simulation study, that was implemented and performed in the statistical computing environment \texttt{R}, see \cite{rco:2019}. The maximum on the hypersphere in (\ref{eq:test}) cannot be calculated analytically, and therefore one has to approximate it with a computationally fast method. We suggest to use a uniform random cover of the hypersphere: Simulate a large number $m$ of uniformly distributed points on $\Sp$, $B_1,\ldots,B_m$ (say), evaluate the so chosen centered and squared projections $\left(\frac1n\sum_{j=1}^n(B_k^\top U_j)^{\, \beta}-\psi_d(\beta)\right)^2$, $k=1,\ldots,m$, and approximate the maximum value in (\ref{eq:test}) by the discrete maximum over all $k$. Critical values for $T_{n,\, \beta}$ under $H_0$ have been simulated with 20000 replications and a random cover of $m=5000$ points for $d=2,3$ and with 20000 replications and $m=20000$ points for $d=5,10$, see Table \ref{tab:table_crit_values}.

The critical values in the rows in Table \ref{tab:table_crit_values} denoted by "$\infty$" and "$\infty^*$" represent approximations of the limit random element $\max_{b \in \Sp} Z^2_\beta(b)$ in Corollary \ref{cor:H0} via two methods. The first method, which corresponds to the rows with "$\infty$", simulates the same random cover of the sphere as above, and it considers a large number (say) $\ell$ of random variables $Z_j=\max(X_j^2)$, $j=1,\ldots,\ell$, with iid. $X_j\sim \mbox{N}_m(0,\Sigma_\beta)$, where $\mbox{N}_m$ is the $m$-variate normal distribution and  $\Sigma_\beta=\left(\rho_\beta(B_{k_1},B_{k_2})\right)_{k_1,k_2\in\{1,\ldots,m\}}$ is a singular $m\times m$-covariance matrix for $m\ge d$ and $\rho_\beta$ is the covariance kernel in (\ref{eq:covk}) for which we have already summarized explicit formulas in Remark \ref{rem:rho_beta_explicit}. Here $x^2$ is shorthand for the vector of squared components of $x$. Next, we calculate the empirical $95\%$ quantile of $Z_1,\ldots,Z_\ell$, where each approximation was simulated with $\ell=100000$ and $m=1000$ for $d=2,3$ as well as $\ell=10000$ and $m=5000$ for $d=5,10$.
The second method utilizes the alternative representation of the Gaussian process from Proposition \ref{pro:AltD}. For this purpose, we need orthonormal bases of the spaces $\harmonicspace$ for $k=0,\ldots,\beta$ and the corresponding eigenvalues $\lambda_k$. We have already presented an explicit list of these eigenvalues for $\beta=1,\ldots,6$ at the end of section \ref{sec:asy_null}. In each replication step of the Monte-Carlo simulation we generate  an array $N_{k,j}$, $j \in \{ 1,\ldots,\nu_d(k) \}$ of independent unit normal random variables, cover $\Sp$, once again, with $m$ uniformly distributed points $B_1,\ldots,B_m$ and calculate $Y_i = \sqrt{|\Sp|} \sum_{k=0}^\beta \sqrt{\lambda_k} \sum_{j=1}^{\nu_d(k)} \phi_{k,j}(b) N_{k,j}, i = 1,\ldots,m$.
Repeating this step for the number of set replications $\ell$ yields an approximation of the limit distribution of $T_{n, \, \beta}$ in the same fashion as before. However, so far there is no library with a stable implementation of orthonormal spherical harmonics in higher dimensions and orders, which is why we restricted the simulation with this method to the case of $d=2$. We used the package \texttt{HFT.m} in \texttt{Mathematica} in order to implement an orthonormal basis in \texttt{R}. Each approximation was performed with $\ell = 20000$ and $m=2500$.

Table \ref{tab:table_crit_values} shows empirical and approximated 0.95 quantiles of $T_{n, \, \beta}$ under the null hypothesis. It is interesting to compare the approximated critical values with the 0.95 quantiles of $\chi^2_d/d$ for $T_{n,1}$ (respectively the Rayleigh test), which are $2.996$ for $d=2$, $2.605$ for $d=3$, $2.214$ for $d=5$, and $1.830$ for $d=10$. Evidently, the approximation with the random covering and the limiting process is close to the theoretical asymptotic critical values for the dimensions $d=2,3,5$, but it gets less accurate for dimensions greater than 5. This behaviour can be explained by the curse of dimensionality, indicating that more points on the unit sphere in the random covering should be considered to increase the accuracy of the approximation. A similar behaviour can be observed for $\beta = 2$ and $2(d-1)/(d^2(d+2)) \, \chi^2_{\nu_d(2)}$ with $\nu_d(2) = (d-1)(d+2)/2$, where, of course, the latter random variable is only an upper bound for the limit distribution of $T_{n, 2}$ as we have seen in Remark \ref{rem:AltD_explicit_cases}. Nevertheless, the numerical results support the theoretical findings of Section \ref{sec:asy_null}.

\begin{table}
\small
\centering
\begin{tabular}{>{\hspace{2mm}}c<{\hspace{2mm}}|r|rrrrrr}
    \toprule[0.5mm]
    \midrule
                    & \backslashbox{$n$}{$\beta$}
                                    & 1     & 2     & 3     & 4     & 5     & 6     \\
                    \midrule
    $d=2$           & 20            & 2.906 & 0.746 & 2.037 & 0.917 & 1.761 & 0.941 \\
                    & 50            & 2.968 & 0.730 & 2.051 & 0.914 & 1.734 & 0.934 \\
                    & 100           & 3.004 & 0.752 & 2.031 & 0.906 & 1.730 & 0.936 \\
                    & 500           & 3.033 & 0.735 & 2.081 & 0.923 & 1.724 & 0.939 \\
                    & $\infty$      & 2.986 & 0.750 & 2.050 & 0.924 & 1.729 & 0.944 \\
                    & $\infty^*$    & 2.944 & 0.753 & 2.047 & 0.923 & 1.735 & 0.945 \\
                    \midrule
    $d=3$           & 20            & 2.582 & 0.864 & 1.306 & 0.794 & 0.994 & 0.738 \\
                    & 50            & 2.578 & 0.875 & 1.319 & 0.751 & 0.932 & 0.666 \\
                    & 100           & 2.562 & 0.881 & 1.293 & 0.734 & 0.922 & 0.632 \\
                    & 500           & 2.585 & 0.869 & 1.298 & 0.733 & 0.901 & 0.614 \\
                    & $\infty$      & 2.605 & 0.866 & 1.283 & 0.724 & 0.895 & 0.606 \\
                    \midrule
    $d=5$           & 20            & 2.183 & 0.736 & 0.729 & 0.519 & 0.444 & 0.399 \\
                    & 50            & 2.157 & 0.695 & 0.674 & 0.451 & 0.378 & 0.318 \\
                    & 100           & 2.203 & 0.685 & 0.654 & 0.407 & 0.352 & 0.280 \\
                    & 500           & 2.173 & 0.663 & 0.632 & 0.363 & 0.324 & 0.232 \\
                    & $\infty$      & 2.161 & 0.638 & 0.620 & 0.340 & 0.306 & 0.206 \\
                    \midrule
    $d=10$          & 20            & 1.567 & 0.419 & 0.252 & 0.165 & 0.107 & 0.090 \\
                    & 50            & 1.580 & 0.368 & 0.209 & 0.124 & 0.074 & 0.060 \\
                    & 100           & 1.586 & 0.342 & 0.190 & 0.105 & 0.059 & 0.046 \\
                    & 500           & 1.605 & 0.314 & 0.173 & 0.080 & 0.045 & 0.030 \\
                    & $\infty$      & 1.485 & 0.277 & 0.153 & 0.062 & 0.036 & 0.019 \\
                    \midrule
    \bottomrule[0.5mm]
\end{tabular}
\caption{Empirical and approximated 0.95 quantiles of $T_{n, \, \beta}$ under $H_0$ for dimensions $d \in \{2, 3, 5, 10\}$, sample sizes $n \in \{ 20, 50, 100, 500 \}$ and $\beta \in \{ 1, \ldots, 6 \}$. Here, $\infty$ denotes the approximation of the limit distribution of $T_{n, \, \beta}$ via covariance kernel and $\infty^*$ the approximation via spherical harmonics.} \label{tab:table_crit_values}
\end{table}

We consider testing for uniformity on the unit circle $\mathcal{S}^1$, on the unit sphere $\mathcal{S}^2$ and on the hypersphere $\mathcal{S}^5$, and we divide the presentation of the simulation study into two parts, since different competing tests are considered in these cases.

Generating uniformly distributed random numbers on $\Sp$ can be done efficiently, since for a random vector $N \sim \mathcal{N}_{d}(0, I_d)$, where $\mathcal{N}_{d}$ stands for the $d$-variate normal distribution on $\mathbb{R}^d$, we have
\begin{equation*}
    \frac{N}{|| N ||} \sim \unifsphere.
\end{equation*}
This property is merely a consequence of the rotational invariance of $N$ and the fact that the uniform distribution is the only rotationally invariant distribution on $\Sp$. We consider the following alternatives to the uniform distribution:
\begin{itemize}
    \item von Mises--Fisher distribution: \\
    This alternative class was already introduced in Example \ref{bsp_bahadur_vMF}. The density is given by
    \begin{equation*}
        \Sp \ni x \mapsto \frac{\left( \kappa/2 \right)^{d/2-1}}{2 \pi^{d/2} I_{\frac{d}{2}-1}(\kappa)} \exp(\kappa x^\top \theta),
    \end{equation*}
    where $I_{\frac{d}{2}-1}$ is the modified Bessel function of the first kind and order $d/2-1$. This class is denoted with $\text{vMF}(\theta, \kappa)$.
    \item Mix of von Mises--Fisher distributions with two centers: \\
    Let $U$ be uniformly distributed on $(0,1)$, $p \in (0,1)$ and $Y_i \sim \text{vMF}(\theta_i, \kappa_i)$ with corresponding location and concentration parameters for $i=1,2$. Let  $U$, $Y_1$ and $Y_2$ be stochastically independent. Then we generate a random sample $X$ according to
    \begin{equation*}
        X = Y_1 \mathds{1}_{\{ U < p \} } + Y_2 \mathds{1}_{\{ U \geq p \} }.
    \end{equation*}
    We denote this alternative class with $\text{Mix-vMF}(p, \theta_1, \theta_2, \kappa_1, \kappa_2)$.
    \item Mix of von Mises--Fisher distributions with three centers: \\
     Let $U$ be uniformly distributed on $(0,1)$, $p \in (0,1/2)$ and $Y_i \sim \text{vMF}(\theta_i, \kappa_i)$ with corresponding location and concentration parameters for $i=1,2,3$. Let $U$, $Y_1$, $Y_2$ and $Y_3$ be stochastically independent. Then we generate a random sample $X$ according to
    \begin{equation*}
        X = Y_1 \mathds{1}_{\{ U < p \} } + Y_2 \mathds{1}_{\{ p \leq U < 2p \} } + Y_3 \mathds{1}_{\{ U \geq 2p \} }.
    \end{equation*}
    We denote this alternative class with $\text{Mix-vMF}(p, \theta_1, \theta_2, \theta_3, \kappa_1, \kappa_2, \kappa_3)$.
    \item Bingham distribution: \\
    The density
    \begin{equation*}
        \Sp \ni x \mapsto \frac{1}{c(d,A)} \exp(x^\top A x)
    \end{equation*}
    with a symmetric $d \times d$-matrix A and a normalizing constant $c(d, A)$ yields the Bingham model. In the following this alternative class is denoted with $\text{Bing(A)}$.
    \item Legendre polynomial distribution: \\
    We defined the Legendre polynomial alternative class $\text{LP}_m(\theta, \kappa)$ with order $m \in \N$, direction $\theta \in \Sp$ and $\kappa \in [0,1]$ in Example \ref{bsp_bahadur_LP}.  This class is given by the density
    \begin{equation*}
        f(x \, | \, \kappa) = \frac{1}{|\Sp|} \left( 1 + \kappa P_m^{\, d}(\skalarprodukt{\theta}{x}) \right), \quad x \in \Sp.
    \end{equation*}
    Due to $f(x \, | \, \kappa) \leq (1 + \kappa)|\Sp|^{-1}$, $x \in \Sp$,
    the acceptance rejection algorithm gives us a simple method to generate random numbers of this alternative class, see \cite{KE:2013} for a description of this algorithm. Simulations on $\Sp[1]$ with $\kappa = 1$ revealed that the order $m$ dictates the formation of exactly $m$ clusters, which spread out equidistantly from the direction $\theta$ over the unit circle. So, numerically, this class generalizes uni- and multipolar distributions, among them the von Mises--Fisher and Watson distribution.
\end{itemize}

Further details and properties of the presented distributions may be found in \cite{MJ:2000}, Section 9.3 and 10.3. Throughout the whole chapter the nominal level of significance is set to $0.05$. Empirical critical values for the competing statistics have been computed with a replication number of $l=20000$ as well. The empirical powers in all tables are based on $5000$ replications of a testing decision and are, in reality, rejection frequencies. Next, we specify the considered alternatives further due to limited space in the tables.
Let
\begin{align*}
    &\theta_1 = (1, 0, \ldots, 0)^\top, \quad \theta_2 = (-1, \ldots, -1)^\top, \quad
    \theta_3 = (-1, 1, \ldots, 1)^\top, \\[0.25cm]
    &A_1 = \text{diag}(1,2, \ldots, d), \quad A_2 = \text{diag}(-d, 0, \ldots, 0, d).
\end{align*}
If directions do not lie on $\Sp$, then the functions in \texttt{R} use the orthogonal projection onto $\Sp$ in order to generate random numbers of that alternative. Then let
\begin{itemize}
    \item vMF$_1(\kappa) = \text{vMF}(\theta_1, \kappa)$,
    \item Mix-vMF$_1(p) = \text{Mix-vMF}(p, -\theta_1, \theta_1, 1, 1)$ and  \\[0.1cm]        Mix-vMF$_2(p) = \text{Mix-vMF}(p, -\theta_1, \theta_1, 1, 4)$,
    \item Mix-vMF$_3(p) = \text{Mix-vMF}(p, \theta_2, \theta_3, \theta_1, 2, 3, 3)$ and \\[0.1cm] Mix-vMF$_4(p) = \text{Mix-vMF}(p, \theta_2, \theta_3, \theta_1, 2, 3, 4)$,
    \item Bing$_1(\kappa) =\text{Bing}(\kappa A_1)$ and Bing$_2(\kappa) =\text{Bing}(\kappa A_2)$, and
    \item LP$_m(\kappa) = \text{LP}_m(\theta_1, \kappa)$.
\end{itemize}

\subsection{Unit circle \texorpdfstring{$\Sp[1]$}{Lg}}

In this subsection the sample of random vectors on $\Sp$ is expressed in polar coordinates, i.e. for $U_i \in \Sp$, $i=1,\ldots,n$, we write $U_i = (\cos(\vartheta_i), \sin(\vartheta_i))^\top$ with a random angle $\vartheta_i \in [0,2\pi)$. We consider the subsequent competing test procedures on the unit sphere. The references next to them refer to further information about the respective statistic. Note that large values are significant for all presented test statistics except for the statistic by Cuesta-Albertos et al. in \cite{CCF:2009}.
\begin{itemize}
    \item Kuiper test, \cite{K:1960}: \\
    Let $F_n(\vartheta) = \frac{1}{n} \sum_{i=1}^n \mathds{1}_{\{ \vartheta_i \leq \vartheta \} }$ be the empirical distribution function based on the angles $\vartheta_1,\ldots,\vartheta_n$ and $F(\vartheta) = \frac{\vartheta}{2 \pi} \mathds{1}_{[0,2 \pi)}(\vartheta) + \mathds{1}_{[2 \pi, \infty)}(\vartheta)$ the distribution function of the uniform distribution on $\Sp[1]$ for $\vartheta \in \R$. The Kuiper test considers the quantities
    \begin{align*}
        &D_n^+
        = \sqrt{n} \underset{\vartheta \in [0,2 \pi)}{\sup} \left( F_n(\vartheta) - F(\vartheta) \right)
        = \sqrt{n} \underset{i=1,\ldots,n}{\max} \left( \frac{i}{n} - X_i \right), \\
        &D_n^-
        = \sqrt{n} \underset{\vartheta \in [0,2 \pi)}{\sup} \left( F(\vartheta) - F_n(\vartheta) \right)
        = \sqrt{n} \underset{i=1,\ldots,n}{\max} \left( X_i - \frac{i-1}{n} \right),
    \end{align*}
    with $X_i = \frac{\vartheta_{(i)}}{2 \pi}$ for $i=1,\ldots,n$, where $\vartheta_{(1)},\ldots,\vartheta_{(n)}$ is the ordered sample of the random angles. Then the test utilizes the statistic
    \begin{equation*}
        V_n = D_n^+ + D_n^-.
    \end{equation*}
    \item Watson test, \cite{W:1961}: \\
    According to the previous definitions, the Watson test is given by
    \begin{align*}
        U_n^2
        &= n \int_0^{2 \pi} \left[ F_n(\vartheta) - F(\vartheta) - \int_0^{2 \pi} F_n(\omega) - F(\omega) \; \text{d}F(\omega) \right]^2 \; \text{d}F(\vartheta) \\
        &= \sum_{i=1}^n \left[ \left( X_i - \frac{i-1/2}{n} \right) - \left( \Bar{X} - \frac{1}{2} \right) \right]^2 + \frac{1}{12n},
    \end{align*}
    where $\Bar{X} = \frac{1}{n} \sum_{i=1}^n X_i$.
    \item Ajne test, \cite{AJ:1968}: \\
    The Ajne test on $\Sp[1]$ is based on the statistic
    \begin{equation*}
        A_n = \frac{1}{2 \pi n} \int_0^{2 \pi} \left[ N(\alpha) - \frac{n}{2} \right]^2 \; \text{d}\alpha = \frac{n}{4} - \frac{1}{n \pi} \sum_{1 \leq i < j \leq n} d_c(\vartheta_i, \vartheta_j).
    \end{equation*}
    Here,
    \begin{equation*}
        N(\alpha) = \# \{ \vartheta_1,\ldots,\vartheta_n \, | \, d_c(\alpha,\vartheta_i) < \pi/2, \; i = 1,\ldots,n \}, \quad \alpha \in [0,2 \pi),
    \end{equation*}
    with $d_c(\alpha,\vartheta) = \min(|\alpha - \vartheta|, \, 2 \pi - |\alpha - \vartheta|)$, $\vartheta \in [0,2 \pi)$.
    \item Rayleigh test, \cite{R:1919}: \\
    The classical Rayleigh test on $\Sp[1]$ is given by
    \begin{equation*}
        R_n = 2 n \| \Bar{U} \|^2 = \frac{2}{n} \left[ \left( \sum_{i=1}^n \cos(\vartheta_i) \right)^2 + \left( \sum_{i=1}^n \sin(\vartheta_i) \right)^2 \right].
    \end{equation*}
    However, for the simulation we will use the modification as in \cite{MJ:2000}, Section 6.3.1, which permits an improved $\chi_2^2$ approximation, i.e., we use the statistic
    \begin{equation*}
        R_n^{\text{ mod}} = \left( 1- \frac{1}{2n} \right) R_n + \frac{n \| \Bar{U} \|^4}{2}.
    \end{equation*}
    \item test of Cuesta-Albertos et al., \cite{CCF:2009}: \\
    The test by Cuesta-Albertos et al. is based on random projections of the sample $U_1,\ldots,U_n$. For this, one chooses, independently of $U_1,\ldots,U_n$, a direction $H \sim \unifsphere$ and considers the projected random variables $Y_1 = \skalarprodukt{U_1}{H}, \ldots, Y_n = \skalarprodukt{U_n}{H}$. In \cite{CCF:2009}, Theorem 2.2, it has been proved that the distribution of $Y_1$ characterizes, with probability 1, the distribution of $U_1$. To that effect, $H_0$ is almost surely equivalent to
    \begin{equation*}
        H_0^*: \skalarprodukt{U}{H} \sim F_1,
    \end{equation*}
    where $U$ is a random vector with values in $\Sp[1]$ and $F_1$ is the distribution of the random projection $Y_1 = \skalarprodukt{U_1}{H}$. In the case $d=2$
    \begin{equation*}
        F_1(y) = \left( 1 - \frac{1}{\pi}\arccos(y) \right)\mathds{1}_{[-1,1]}(y) + \mathds{1}_{(1, \infty)}(y), \quad y \in \R.
    \end{equation*}
    The test proceeds as follows:
    \begin{enumerate}[label=\roman*)]
        \item Choose $q \in \N$ random projections $H_1,\ldots,H_q \sim \unifsphere$.
        \item For $m = 1,\ldots,q$ calculate the p-values $p_m$ of the Kolmogorov-Smirnov statistics
        \begin{equation*}
            K_{n,m} = \underset{y \in [-1,1]}{\sup} | F_{n,m}(y) - F_1(y) |,
        \end{equation*}
        where $F_{n,m}$ is the empirical distribution function based on $\skalarprodukt{U_1}{H_m},\ldots,\skalarprodukt{U_n}{H_m}$.
        \item Reject $H_0^*$, and thus $H_0$, for small values of the aggregated test statistic
        \begin{equation*}
            CA_n^q = \min\{p_1,\ldots,p_q\}.
        \end{equation*}
    \end{enumerate}
    This test procedure is a modification of the test that only chooses a single random direction in order to mitigate poor power due to a possibly unfavorable choice of said random direction.
\end{itemize}

\begin{table}[H]
    \footnotesize
    \centering
    \begin{tabular}{l|rrrrrrrrrrr}
        \toprule[0.5mm]
        \midrule
        \backslashbox{\scriptsize{Alternative}}{\scriptsize{Test}} & $T_{n,1}$ & $T_{n,2}$ & $T_{n,3}$ & $T_{n,4}$ & $T_{n,5}$ & $T_{n,6}$ & $K_n$ & $U_n^2$ & $A_n$ & $R_n^{\text{ mod}}$ & $CA_n^{25}$ \\
        \midrule
        $\unifsphere$ & 5 & 4 & 5 & 5 & 5 & 6 & 5 & 5 & 5 & 5 & 5 \\
        vMF$_1$(0.05) & \textbf{6} & 5 & \textbf{6} & 5 & \textbf{6} & 5 & 5 & 5 & 5 & 5 & 5 \\
        vMF$_1$(0.1) & \textbf{9} & 5 & \textbf{9} & 5 & 8 & 6 & 7 & 7 & \textbf{9} & \textbf{9} & 8 \\
        vMF$_1$(0.25) & 32 & 5 & 31 & 6 & 27 & 5 & 29 & 32 & 32 & \textbf{33} & 29 \\
        vMF$_1$(0.5) & 88 & 6 & 86 & 6 & 82 & 7 & 84 & 88 & 88 & \textbf{89} & 84 \\
        vMF$_1$(0.75) & \textbf{100} & 12 & \textbf{100} & 12 & 99 & 11 & 99 & \textbf{100} & \textbf{100} & \textbf{100} & 99 \\
        vMF$_1$(1) & \textbf{100} & 25 & \textbf{100} & 25 & \textbf{100} & 23 & \textbf{100} & \textbf{100} & \textbf{100} & \textbf{100} & \textbf{100} \\
        vMF$_1$(2) & \textbf{100} & 98 & \textbf{100} & 98 & \textbf{100} & 98 & \textbf{100} & \textbf{100} & \textbf{100} & \textbf{100} & \textbf{100} \\
        Mix-vMF$_1$(0.25) & 81 & 25 & 80 & 26 & 75 & 24 & 80 & \textbf{82} & 81 & 81 & 79 \\
        Mix-vMF$_1$(0.5) & 5 & \textbf{26} & 6 & 25 & 5 & 25 & 8 & 7 & 5 & 5 & 8 \\
        Mix-vMF$_2$(0.5) & 72 & \textbf{100} & 80 & 99 & 82 & 99 & 97 & 96 & 74 & 73 & 97 \\
        Mix-vMF$_2$(0.75) & 29 & \textbf{82} & 30 & 81 & 29 & 81 & 56 & 52 & 32 & 31 & 57 \\
        Mix-vMF$_3$(0.25) & 39 & \textbf{85} & 54 & \textbf{85} & 59 & 83 & 73 & 66 & 45 & 40 & 73 \\
        Mix-vMF$_4$(0.33) & 14 & 69 & 23 & \textbf{70} & 32 & 67 & 39 & 34 & 17 & 14 & 39 \\
        Bing$_1$(0.25) & 5 & \textbf{12} & 5 & 11 & 5 & 11 & 6 & 6 & 5 & 5 & 6 \\
        Bing$_1$(0.5) & 5 & \textbf{33} & 6 & 32 & 6 & 30 & 10 & 8 & 5 & 5 & 10 \\
        Bing$_1$(1) & 5 & \textbf{88} & 6 & 87 & 6 & 86 & 34 & 28 & 5 & 6 & 34 \\
        Bing$_2$(0.1) & 5 & \textbf{22} & 6 & \textbf{22} & 5 & \textbf{22} & 9 & 7 & 5 & 5 & 8 \\
        Bing$_2$(0.25) & 5 & \textbf{88} & 6 & \textbf{88} & 6 & 87 & 34 & 28 & 6 & 5 & 35 \\
        LP$_3$(0.1) & 5 & 5 & \textbf{6} & \textbf{6} & \textbf{6} & 5 & 5 & 4 & 5 & 5 & 5 \\
        LP$_4$(0.1) & 5 & \textbf{6} & 5 & 5 & 5 & \textbf{6} & 5 & 5 & 5 & 5 & 5 \\
        LP$_3$(0.5) & 5 & 5 & 25 & 6 & \textbf{43} & 5 & 18 & 10 & 11 & 4 & 18 \\
        LP$_4$(0.5) & 5 & 5 & 5 & 18 & 6 & \textbf{31} & 12 & 8 & 5 & 5 & 13 \\
        LP$_3$(1) & 5 & 6 & 90 & 5 & \textbf{100} & 5 & 75 & 69 & 75 & 5 & 74 \\
        LP$_4$(1) & 5 & 6 & 6 & 58 & 6 & \textbf{96} & 47 & 25 & 5 & 6 & 46 \\
        \midrule
        \bottomrule[0.5mm]
    \end{tabular}
    \caption{Empirical power for $n=100$ and $d=2$.} \label{table_power_100_2}
\end{table}

Table \ref{table_power_100_2} presents the empirical power of all considered test statistics for a sample size of $n=100$. Numbers in bold indicate the highest power to a given alternative. One can immediately see that the significance level is maintained or as in the case of $T_{n, 6}$ only slightly exceeded. The classical tests and $T_{n, \, \beta}$ for $\beta$ odd perform better than other tests with the unipolar von Mises--Fisher alternative. In the case of the mixtures of von Mises--Fisher distributions does $T_{n, \, \beta}$ for $\beta$ even show higher power. The same is true for the Bingham alternative. In any case, $T_{n, \, 2}$ obviously performs best here.
With the LP$_m$ alternative class it is interesting to observe that for $m = 3$ the test statistics $T_{n, \, 3}$ and $T_{n, \, 5}$ and for $m = 4$ the test statistics $T_{n, \, 4}$ and $T_{n, \, 6}$ dominate. It seems as though a higher exponentiation via $\beta$, i.e. higher moments in the definition of $T_{n, \, \beta}$, yields better results with multipolar distributions as long as the number of clusters and $\beta$ share the same parity. This observation is conform with the theoretical findings of Chapter \ref{sec:asy_fixed} and \ref{sec:Bah}.

\subsection{Unit sphere \texorpdfstring{$\Sp[2]$}{Lg} and unit hypersphere \texorpdfstring{$\Sp[5]$}{Lg}}

The Ajne and Rayleigh test statistic can be extended to $\Sp$, $d \geq 2$, in a straightforward way, since they are special cases of the fruitful Sobolev test class, see \cite{GV:2018}, Section 3. In addition, we consider three other uniformity tests. Except for the test by Cuesta-Albertos et al. in \cite{CCF:2009}, is $H_0$, once more, rejected for large values of the respective statistic.
\begingroup
\allowdisplaybreaks
\begin{itemize}
    \item Ajne test: \\
    The higher-dimensional extension of the Ajne test happens via
    \begin{equation*}
        A_n = \frac{\Gamma(d/2-1)}{2 n \pi^{d/2} } \int_{\Sp} \left[ N(\omega) - \frac{n}{2} \right]^2 \, \text{d}\sigma(\omega) = \frac{n}{4} - \frac{1}{n \pi} \sum_{1 \leq i < j \leq n} \arccos(\skalarprodukt{U_i}{U_j}),
    \end{equation*}
    where
    \begin{equation*}
        N(\omega) = \# \{ U_1,\ldots,U_n \, | \, \skalarprodukt{\omega}{U_i} \geq 0, \; i = 1,\ldots,n \}, \quad \omega \in \Sp.
    \end{equation*}
    \item Rayleigh test: \\
    The Rayleigh test on $\Sp$ is given by
    \begin{equation*}
        R_n = d n \| \Bar{U} \|^2
    \end{equation*}
    and the modification in \cite{MJ:2000}, Section 10.4.1, is
    \begin{equation*}
        R_n^{\text{ mod}} = \left( 1- \frac{1}{2n} \right) R_n + \frac{1}{2n(d+2)} R_n^2.
    \end{equation*}
    \item Bingham test, \cite{B:1974}: \\
    The Bingham test uses the empirical covariance matrix of the sample $S = \frac{1}{n} \standardsumme U_j U_j^\top$ and is based on the quantity
    \begin{equation*}
        B_n = \frac{nd(d+2)}{2} \left( \text{trace}(S^2) - \frac{1}{d} \right).
    \end{equation*}
    \item Gin\'{e}'s Sobolev test, \cite{G:1975}: \\
    We consider Giné's $G_n$ test, which is given by the statistic
    \begin{equation*}
        G_n = \frac{n}{2} - \frac{(d-1)\Gamma(d/2-1)^2}{2 n \Gamma(d/2)^2 } \sum_{1 \leq i < j \leq n} \sin(\arccos(\skalarprodukt{U_i}{U_j})).
    \end{equation*}
    \item test of Cuesta-Albertos et al., \cite{CCF:2009}: \\
    The main idea and test procedure is the same as in the case $d=2$. The only quantity that changes in higher dimensions is the distribution function $F_1$ of the random projection. For the general case, this distribution function may be obtained from the density of the projection as in \cite{MJ:2000}, Section 9.3.1, according to
    \begin{equation} \label{verteilungsfunktion_projektion}
        \begin{aligned}
        F_{d-1}(y)
        &= B \left( \frac{1}{2}, \frac{d-1}{2} \right)^{-1} \int_{-1}^y (1-t^2)^{\frac{d-3}{2}} \, \text{d}t \, \mathds{1}_{[-1,1]}(y) + \mathds{1}_{(1, \infty)}(y) \\[0.25cm]
        &= \frac{1}{2} \left[ 1 + \text{sign}(y) B_{y^2} \left( \frac{1}{2}, \frac{d-1}{2} \right) \right] \mathds{1}_{[-1,1]}(y) + \mathds{1}_{(1, \infty)}(y), \quad y \in \R,
        \end{aligned}
    \end{equation}
    where $B_x(a,b) = B (a,b)^{-1} \int_{0}^x t^{a-1} (1-t)^{b-1} \, \text{d}t$, $x \geq 0, a,b>0$ is the regularized, incomplete Beta function and sign$(\cdot)$ the usual sign function on $\R$.
    \item Cram\'{e}r-von Mises type test, \cite{GNC:2020}: \\
    Lastly, we present another test that is based on a projection approach and comes from Garc\'{i}a-Portugu\'{e}s et al., see \cite{GNC:2020}. It is, to a certain extent, the Cram\'{e}r-von Mises counterpart to the test by Cuesta-Albertos et al. and, thus, considers the expected value
    \begin{equation*}
        \text{CvM}_n = n \, \E_H \left[ \int_{-1}^1 | F_{n,H}(y) - F_{d-1}(y) |^2 \; \text{d}F_{d-1}(y) \right].
    \end{equation*}
    Here, $H \sim \unifsphere$, $F_{n,H}$ is the empirical distribution function based on $\skalarprodukt{U_1}{H},\ldots, \linebreak \skalarprodukt{U_n}{H}$, and $F_{d-1}$ is the distribution function in \eqref{verteilungsfunktion_projektion}. This test statistic can be written as a U-statistic for a practical implementation in \texttt{R} according to
    \begin{equation*}
        \text{CvM}_n = \frac{2}{n} \sum_{1 \leq i < j \leq n} \zeta_{d-1}(\arccos(\skalarprodukt{U_i}{U_j})) + \frac{3n-2}{6},
    \end{equation*}
    where for $\vartheta \in [0, \pi]$
    \begin{align*}
    \zeta_{d-1}(\vartheta)
    = \begin{cases}
        \begin{alignedat}{2}
            &  \frac{1}{2} + \frac{\vartheta}{2 \pi} \left( \frac{\vartheta}{2 \pi} -1 \right)                &,\quad& \textrm{for} \; d=2,\\[0.25cm]
	        &  \frac{1}{2} - \frac{1}{4} \sin \left( \frac{\vartheta}{2} \right)                            &,\quad& \textrm{for} \; d=3,\\[0.25cm]
	        &  \zeta_1(\vartheta) + \frac{1}{4 \pi^2} \left( (\pi -\vartheta) \tan \left( \frac{\vartheta}{2} \right) - 2 \sin^2 \left( \frac{\vartheta}{2} \right) \right)                                                             &,\quad& \textrm{for} \; d=4,\\[0.25cm]
	        \begin{split}
	            &- 4 \int_0^{\, \cos(\vartheta/2)} F_{d-1}(y)  F_{d-2} \left( \frac{y \tan(\vartheta/2)}{\sqrt{1-y^2}} \right)  \; \text{d}F_{d-1}(y) \\
	            &-\frac{3}{4} + \frac{\vartheta}{2 \pi} + 2 F_{d-1}^2 \left(\cos \left( \frac{\vartheta}{2} \right) \right)
	        \end{split}     &,\quad& \textrm{for} \; d \geq 5.
    \end{alignedat}
	\end{cases}
    \end{align*}
\end{itemize}
\endgroup
Tables \ref{table_power_100_3}, \ref{table_power_100_5} and \ref{table_power_100_10} show the empirical power for higher dimensions and for a sample size of $n=100$. The significance level is maintained in all cases. Here we can essentially observe similar patterns as in the case $d=2$; the power is merely lower and tends to decrease with increasing dimension. The tests $CA_n^{100}$ and $\text{CvM}_n$ show relatively high power for certain alternatives. We also note that the Bingham and Giné test perform better than other tests in the case of a Bingham alternative. However, it is especially striking that almost all tests fail to recognize the Legendre polynomial alternative class as the dimension grows.

\begin{table}[H]
\footnotesize
\centering
\begin{tabular}{l|rrrrrrrrrrrr}
    \toprule[0.5mm]
    \midrule
    \backslashbox{\scriptsize{Alternative}}{\scriptsize{Test}} & $T_{n,1}$ & $T_{n,2}$ & $T_{n,3}$ & $T_{n,4}$ & $T_{n,5}$ & $T_{n,6}$ & $A_n$ & $R_n^{\text{ mod}}$ & $B_n$ & $G_n$ & $CA_n^{100}$ & $\text{CvM}_n$ \\
    \midrule
    $\unifsphere$ & 5 & 5 & 5 & 5 & 5 & 5 & 4 & 5 & 5 & 5 & 5 & 5 \\
    vMF$_1$(0.05) & \textbf{7} & 5 & 5 & 6 & 5 & 5 & 4 & 4 & 5 & 5 & 4 & 5 \\
    vMF$_1$(0.1) & \textbf{8} & 5 & 7 & 6 & 7 & 5 & 6 & 7 & 5 & 5 & 5 & 6 \\
    vMF$_1$(0.25) & \textbf{21} & 5 & 18 & 5 & 15 & 6 & 18 & 19 & 4 & 5 & 16 & 19 \\
    vMF$_1$(0.5) & \textbf{68} & 6 & 61 & 6 & 53 & 6 & 65 & 67 & 5 & 5 & 59 & 64 \\
    vMF$_1$(0.75) & \textbf{96} & 8 & 93 & 8 & 90 & 7 & \textbf{96} & \textbf{96} & 8 & 7 & 94 & \textbf{96} \\
    vMF$_1$(1) & \textbf{100} & 15 & \textbf{100} & 14 & 99 & 14 & \textbf{100} & \textbf{100} & 15 & 14 & \textbf{100} & \textbf{100} \\
    vMF$_1$(2) & \textbf{100} & 93 & \textbf{100} & 91 & \textbf{100} & 87 & \textbf{100} & \textbf{100} & 92 & 92 & \textbf{100} & \textbf{100} \\
    Mix-vMF$_1$(0.25) & \textbf{61} & 14 & 55 & 15 & 51 & 14 & 59 & \textbf{61} & 15 & 14 & 57 & \textbf{61} \\
    Mix-vMF$_1$(0.5) & 6 & \textbf{15} & 5 & \textbf{15} & 5 & 13 & 5 & 5 & 14 & 14 & 6 & 5 \\
     Mix-vMF$_2$(0.5) & 86 & \textbf{100} & 91 & 99 & 92 & 98 & 86 & 86 & 99 & 99 & 97 & 95 \\
     Mix-vMF$_2$(0.75) & 10 & \textbf{75} & 11 & \textbf{75} & 12 & 70 & 8 & 11 & 74 & 73 & 22 & 18 \\
     Mix-vMF$_3$(0.25) & 64 & 90 & 73 & 88 & 73 & 82 & 65 & 65 & \textbf{92} & 91 & 78 & 77 \\
     Mix-vMF$_4$(0.33) & 9 & 90 & 21 & 87 & 29 & 83 & 10 & 8 & \textbf{93} & 91 & 27 & 23 \\
    Bing$_1$(0.25) & 6 & 13 & 6 & 13 & 5 & 12 & 5 & 6 & \textbf{14} & \textbf{14} & 6 & 6 \\
    Bing$_1$(0.5) & 5 & 45 & 6 & 44 & 6 & 39 & 5 & 5 & \textbf{48} & 47 & 10 & 7 \\
    Bing$_1$(1) & 6 & 98 & 8 & 98 & 9 & 96 & 5 & 6 & \textbf{99} & \textbf{99} & 37 & 26 \\
    Bing$_2$(0.1) & 5 & \textbf{17} & 6 & \textbf{17} & 5 & 16 & 5 & 5 & \textbf{17} & \textbf{17} & 6 & 6 \\
    Bing$_2$(0.25) & 6 & 84 & 7 & 82 & 7 & 76 & 5 & 6 & \textbf{86} & 85 & 19 & 13 \\
    LP$_3$(0.1) & 5 & 5 & 5 & \textbf{6} & 5 & 5 & 5 & 5 & 5 & 5 & 5 & 5 \\
    LP$_4$(0.1) & 4 & \textbf{5} & \textbf{5} & \textbf{5} & 4 & \textbf{5} & \textbf{5} & \textbf{5} & \textbf{5} & \textbf{5} & \textbf{5} & \textbf{5} \\
    LP$_3$(0.5) & 5 & 4 & 8 & 6 & \textbf{10} & 5 & 5 & 5 & 5 & 5 & 6 & 6 \\
    LP$_4$(0.5) & 6 & 5 & 6 & \textbf{7} & 6 & \textbf{7} & 5 & 5 & 5 & 6 & 5 & 5 \\
    LP$_3$(1) & 5 & 5 & 20 & 5 & \textbf{33} & 5 & 7 & 5 & 5 & 5 & 10 & 7 \\
    LP$_4$(1) & 5 & 4 & 6 & 10 & 6 & \textbf{17} & 5 & 6 & 5 & 9 & 7 & 5 \\
    \midrule
    \bottomrule[0.5mm]
\end{tabular}
\caption{Empirical power for $n=100$ and $d=3$.} \label{table_power_100_3}
\end{table}

\begin{table}[H]
\footnotesize
\centering
\begin{tabular}{l|rrrrrrrrrrrr}
    \toprule[0.5mm]
    \midrule
    \backslashbox{\scriptsize{Alternative}}{\scriptsize{Test}} & $T_{n,1}$ & $T_{n,2}$ & $T_{n,3}$ & $T_{n,4}$ & $T_{n,5}$ & $T_{n,6}$ & $A_n$ & $R_n^{\text{ mod}}$ & $B_n$ & $G_n$ & $CA_n^{100}$ & $\text{CvM}_n$ \\
    \midrule
    $\unifsphere$ & 5 & 5 & 5 & 5 & 5 & 5 & 5 & 5 & 5 & 5 & 5 & 5 \\
    vMF$_1$(0.05) & 5 & 4 & \textbf{6} & 4 & 5 & 5 & 4 & 4 & \textbf{6} & \textbf{6} & 5 & 5 \\
    vMF$_1$(0.1) & \textbf{6} & 4 & \textbf{6} & 4 & \textbf{6} & 4 & 5 & 5 & \textbf{6} & \textbf{6} & \textbf{6} & 5 \\
    vMF$_1$(0.25) & \textbf{11} & 4 & \textbf{11} & 5 & 8 & 5 & 10 & \textbf{11} & 6 & 6 & 9 & 10 \\
    vMF$_1$(0.5) & \textbf{37} & 5 & 31 & 6 & 22 & 4 & 34 & 33 & 6 & 6 & 28 & 33 \\
    vMF$_1$(0.75) & \textbf{73} & 5 & 63 & 5 & 49 & 5 & 71 & 72 & 7 & 7 & 60 & 71 \\
    vMF$_1$(1) & \textbf{94} & 7 & 89 & 7 & 78 & 7 & \textbf{94} & \textbf{94} & 8 & 9 & 88 & \textbf{94} \\
    vMF$_1$(2) & \textbf{100} & 66 & \textbf{100} & 58 & \textbf{100} & 50 & \textbf{100} & \textbf{100} & 58 & 57 & \textbf{100} & \textbf{100} \\
    Mix-vMF$_1$(0.25) & 33 & 7 & 30 & 8 & 24 & 8 & 33 & \textbf{34} & 8 & 8 & 28 & \textbf{34} \\
    Mix-vMF$_1$(0.5) & 4 & \textbf{7} & 5 & \textbf{7} & 5 & \textbf{7} & 5 & 6 & \textbf{7} & \textbf{7} & 4 & 4 \\
     Mix-vMF$_2$(0.5) & 89 & \textbf{96} & 94 & 95 & 92 & 92 & 90 & 90 & 94 & 94 & 92 & 93 \\
     Mix-vMF$_2$(0.75) & 6 & \textbf{54} & 7 & 53 & 10 & 46 & 5 & 5 & 50 & 51 & 7 & 7 \\
     Mix-vMF$_3$(0.25) & 72 & 65 & \textbf{77} & 61 & 72 & 52 & 73 & 72 & 72 & 71 & 69 & \textbf{77} \\
     Mix-vMF$_4$(0.33) & 23 & 71 & 36 & 66 & 40 & 59 & 24 & 23 & 81 & \textbf{82} & 27 & 32 \\
    Bing$_1$(0.25) & 5 & 14 & 5 & 14 & 6 & 13 & 5 & 5 & 16 & \textbf{17} & 5 & 5 \\
    Bing$_1$(0.5) & 5 & 55 & 7 & 48 & 8 & 42 & 5 & 5 & \textbf{65} & \textbf{65} & 7 & 10 \\
    Bing$_1$(1) & 5 & \textbf{100} & 12 & \textbf{100} & 18 & 99 & 6 & 6 & \textbf{100} & \textbf{100} & 25 & 25 \\
    Bing$_2$(0.1) & 4 & 13 & 6 & 11 & 5 & 10 & 5 & 5 & 14 & \textbf{15} & 5 & 6 \\
    Bing$_2$(0.25) & 6 & 76 & 8 & 70 & 10 & 62 & 6 & 5 & 79 & \textbf{80} & 9 & 9 \\
    LP$_3$(0.1) & \textbf{5} & \textbf{5} & \textbf{5} & \textbf{5} & \textbf{5} & \textbf{5} & \textbf{5} & \textbf{5} & \textbf{5} & \textbf{5} & \textbf{5} & \textbf{5} \\
    LP$_4$(0.1) & \textbf{5} & 4 & \textbf{5} & 4 & 4 & \textbf{5} & \textbf{5} & \textbf{5} & \textbf{5} & \textbf{5} & \textbf{5} & \textbf{5} \\
    LP$_3$(0.5) & \textbf{5} & 4 & \textbf{5} & \textbf{5} & \textbf{5} & \textbf{5} & \textbf{5} & \textbf{5} & \textbf{5} & \textbf{5} & \textbf{5} & \textbf{5} \\
    LP$_4$(0.5) & \textbf{5} & \textbf{5} & \textbf{5} & \textbf{5} & \textbf{5} & \textbf{5} & \textbf{5} & \textbf{5} & \textbf{5} & \textbf{5} & \textbf{5} & \textbf{5} \\
    LP$_3$(1) & 5 & 5 & \textbf{7} & 5 & \textbf{7} & 5 & 5 & 5 & 5 & 5 & 5 & 5 \\
    LP$_4$(1) & 5 & 4 & 5 & 5 & 5 & \textbf{6} & 5 & 5 & 4 & 4 & 5 & 5 \\
    \midrule
    \bottomrule[0.5mm]
\end{tabular}
\caption{Empirical power for $n=100$ and $d=5$.} \label{table_power_100_5}
\end{table}

\begin{table}[H]
\footnotesize
\centering
\begin{tabular}{l|rrrrrrrrrrrr}
    \toprule[0.5mm]
    \midrule
    \backslashbox{\scriptsize{Alternative}}{\scriptsize{Test}} & $T_{n,1}$ & $T_{n,2}$ & $T_{n,3}$ & $T_{n,4}$ & $T_{n,5}$ & $T_{n,6}$ & $A_n$ & $R_n^{\text{ mod}}$ & $B_n$ & $G_n$ & $CA_n^{100}$ & $\text{CvM}_n$ \\
    \midrule
    $\unifsphere$ & 5 & 5 & 6 & 5 & 5 & 5 & 5 & 5 & 5 & 6 & 4 & 5 \\
    vMF$_1$(0.05) & 5 & 5 & \textbf{6} & \textbf{6} & 5 & \textbf{6} & 4 & \textbf{6} & 4 & 4 & 3 & 4 \\
    vMF$_1$(0.1) & 5 & 5 & 5 & \textbf{6} & 5 & \textbf{6} & 5 & \textbf{6} & 4 & 5 & 4 & 5 \\
    vMF$_1$(0.25) & \textbf{7} & 6 & \textbf{7} & 6 & 6 & 6 & 6 & \textbf{7} & 4 & 4 & 5 & 6 \\
    vMF$_1$(0.5) & 14 & 5 & 12 & 6 & 9 & 6 & 13 & \textbf{15} & 4 & 4 & 9 & 12 \\
    vMF$_1$(0.75) & 28 & 6 & 23 & 6 & 15 & 6 & 29 & \textbf{30} & 4 & 4 & 18 & 27 \\
    vMF$_1$(1) & 53 & 5 & 40 & 6 & 25 & 7 & 51 & \textbf{54} & 5 & 5 & 33 & 51 \\
    vMF$_1$(2) & \textbf{100} & 15 & 98 & 13 & 88 & 11 & \textbf{100} & \textbf{100} & 12 & 12 & 96 & \textbf{100} \\
    Mix-vMF$_1$(0.25) & 14 & 5 & 12 & 5 & 9 & 6 & 12 & \textbf{15} & 6 & 6 & 10 & 14 \\
    Mix-vMF$_1$(0.5) & 4 & 4 & 5 & 5 & 5 & 5 & 4 & 5 & 5 & \textbf{6} & 4 & 4 \\
     Mix-vMF$_2$(0.5) & 77 & 56 & \textbf{80} & 47 & 68 & 33 & 76 & 77 & 47 & 47 & 61 & 78 \\
     Mix-vMF$_2$(0.75) & 6 & \textbf{16} & 8 & 14 & 9 & 11 & 5 & 7 & 14 & 14 & 5 & 6 \\
     Mix-vMF$_3$(0.25) & 57 & 21 & 56 & 18 & 40 & 14 & 56 & \textbf{60} & 19 & 19 & 39 & 58 \\
     Mix-vMF$_4$(0.33) & 29 & 29 & 32 & 24 & 26 & 19 & 28 & \textbf{33} & 29 & 28 & 20 & 30 \\
    Bing$_1$(0.25) & 5 & 14 & 7 & 13 & 6 & 11 & 5 & 6 & \textbf{20} & \textbf{20} & 5 & 6 \\
    Bing$_1$(0.5) & 5 & 61 & 10 & 55 & 12 & 42 & 6 & 5 & \textbf{84} & \textbf{84} & 6 & 8 \\
    Bing$_1$(1) & 6 & \textbf{100} & 26 & \textbf{100} & 38 & \textbf{100} & 6 & 6 & \textbf{100} & \textbf{100} & 10 & 21 \\
    Bing$_2$(0.1) & 5 & 9 & 5 & 8 & 6 & 8 & 5 & 6 & \textbf{10} & \textbf{10} & 5 & 6 \\
    Bing$_2$(0.25) & 5 & 65 & 9 & 56 & 11 & 43 & 5 & 5 & \textbf{66} & 65 & 5 & 7 \\
    LP$_3$(0.1) & 5 & \textbf{6} & 5 & 5 & 5 & 5 & 5 & 5 & \textbf{6} & 5 & 4 & 5 \\
    LP$_4$(0.1) & 5 & 5 & 5 & 4 & 5 & 5 & 5 & 5 & 5 & 5 & 4 & \textbf{6} \\
    LP$_3$(0.5) & \textbf{5} & \textbf{5} & \textbf{5} & \textbf{5} & \textbf{5} & \textbf{5} & \textbf{5} & \textbf{5} & \textbf{5} & \textbf{5} & 4 & \textbf{5} \\
    LP$_4$(0.5) & \textbf{5} & \textbf{5} & \textbf{5} & \textbf{5} & \textbf{5} & \textbf{5} & \textbf{5} & \textbf{5} & \textbf{5} & 4 & \textbf{5} & \textbf{5} \\
    LP$_3$(1) & 5 & 5 & \textbf{6} & 5 & 5 & \textbf{6} & \textbf{6} & 5 & 5 & 5 & 5 & 5 \\
    LP$_4$(1) & \textbf{5} & \textbf{5} & \textbf{5} & \textbf{5} & \textbf{5} & \textbf{5} & 4 & \textbf{5} & \textbf{5} & \textbf{5} & \textbf{5} & \textbf{5} \\
    \midrule
    \bottomrule[0.5mm]
\end{tabular}
\caption{Empirical power for $n=100$ and $d=10$.} \label{table_power_100_10}
\end{table}

\section{Data example}\label{sec:real_data}
As a real data example we consider the midpoints of craters on the surface of the moon. The analysed data is contained in the \textit{Moon Crater Database v1 Salamuni{\'c}car} provided at
\begin{center}
\url{https://astrogeology.usgs.gov/search/map/Moon/Research/Craters/GoranSalamuniccar_MoonCraters},
\end{center}
see \cite{SLM:2012,Setal:2014} as well as the webpage for more information on the provenance of the data set. The considered LU78287GT catalogue of 78287 craters is currently the most complete catalogue of Lunar impact craters. This catalogue is globally complete up to diagonal larger than 8 km, and each crater provides at least latitude, longitude, and diameter. Since this large amount of data clearly uniformly leads to rejection of the hypothesis, we consider a subset of midpoints of the craters by restricting the size of the craters to diameters of at least 150km. The resulting data set consists of 119 data points, see Figure \ref{fig:moon1}.
\begin{figure}[t]
\centering
\includegraphics[scale=0.25]{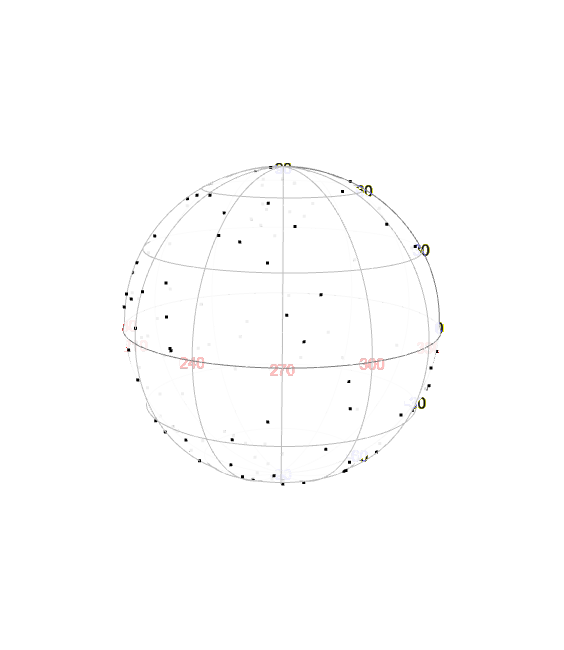}
\caption{Midpoints of craters with diameter length of at least 150km}\label{fig:moon1}
\end{figure}
We performed the tests presented in \eqref{eq:test} with the same procedure as in Section \ref{sec:Simu} with $m=5000$ points on the sphere. In order to provide $p$-values, we simulated 1000 times with the same sample size of uniformly distributed data on the sphere and calculated the relative frequency of times $T_{n,\beta}$ was below the value of the test for a simulation run. This gives an estimate for the $p$-value of the test. The calculated values are tabulated in Table \ref{tab:moon1}. As we can see, the tests for uneven values of $\beta$ clearly reject the hypothesis of uniformity on a $1\%$ significance level, while the even values fail to do so. In view of the simulation results in Sections \ref{sec:Bah} and \ref{sec:Simu}, it seems that a unimodal law like the vMF distribution might be a suitable model to consider.

\begin{table}[t]
\centering
\begin{tabular}{l|rrrrrr}
\texttt{Test} & $T_{n,1}$ & $T_{n,2}$ & $T_{n,3}$ & $T_{n,4}$ & $T_{n,5}$ & $T_{n,6}$ \\ \hline
\texttt{p.values} & 0.001 & 0.364 & 0.001 & 0.272 & 0.004 & 0.230
\end{tabular}
\caption{Empirical $p$-values of the test statistics $T_{n,\beta}$ for several parameters $\beta$ (1000 Monte Carlo runs). }\label{tab:moon1}
\end{table}

\section{Conclusion and outlook}\label{sec:conclusions}
We introduced a family of tests of uniformity on the $d$-dimensional hypersphere depending on powers of maximal projections, which include classical uniformity tests of Rayleigh and Bingham as well as measures of multivariate skewness and kurtosis in the sense of Malkovich and Afifi. We proved the weak convergence of the tests to maxima of a squared centred Gaussian process with zonal covariance structure in the space of continuous functions. We derived the eigenvalues of the connected integral operator and applied the largest eigenvalue to derive local Bahadur efficiencies. We proved consistency, as well as the behaviour under contiguous alternatives of the tests in dependence of the parity of the power parameter. Simulations illustrate that the new tests are serious competitors to classical procedures.

We finish the article by pointing out open questions and new directions for further research. The work at hand leaves the topic about the limit distribution of $(T_{n, \, \beta})_{\beta \in \N}$ under familiar alternatives, like the von Mises--Fisher distribution or the Watson distribution, untouched. In this context only local Bahadur efficiencies have been derived, which have the major drawback that trivial local Bahadur efficiencies, as in the case of $\widetilde{T}_{n, \, 2}$ and a von Mises--Fisher distribution, do not suggest that the statistic is blind against the respective alternative. Essentially, this is because the local Bahadur efficiency does not make a statement about rates of consistency under which a statistic might nevertheless recognize the alternative. For example, the authors of \cite{PPV:2021} prove that the Bingham test, which has structural resemblance with $T_{n, \, 2}$, reliably recognizes a von Mises--Fisher distribution under certain rates of consistency. Hence, it might be interesting to explore the problem of rates of consistency, possibly even for very general alternatives like rotationally symmetric distributions. Another direction for future research could be the problem of local asymptotic normality in the sense of Le Cam. Statistical models with this property allow the construction of optimal tests of a particular kind. This has already been done for the Rayleigh test for certain rotationally symmetric distributions, see \cite{CPV:2017}. Since we establish some results of the Le Cam theory for the statistical model in \ref{sec:asy_cont}, one might continue from there. In Section \ref{sec:Simu} we propose a new parametric family of spherical distributions, namely the Legendre polynomial distribution, which has shown some interesting properties in the numerical simulations. Due to its straightforward parameterization it is relatively easy to work with and to simulate. Nonetheless, at the given moment there is not enough substantial knowledge about the properties and practicability of this distribution, especially in higher dimensions, so that this is another option for further research. High-dimensional results for testing uniformity against monotone rotationally symmetric alternatives in \cite{CPV:2017} give first results for $T_{n,\,1}$, since this test is equivalent to the Rayleigh test. Similar results for $\beta=2$ may be related to \cite{CPV:2022}, and the question for larger $\beta$ is still open.

\section*{Acknowledgements}
The authors thank Norbert Henze for fruitful discussions and numerous suggestions that led to an improvement of the article. Furthermore, the authors are grateful for the suggestion of Michael A. Klatt to analyse the locations of moon craters and for providing the necessary references.


\bibliography{lit-DU}   

\begin{thebibliography}{10}

\bibitem{AJ:1968}
B.~Ajne.
\newblock A simple test for uniformity of a circular distribution.
\newblock {\em Biometrika}, 55(2):343--354, 1968.

\bibitem{AG:1980}
A.~Araujo and E.~Gin\'e.
\newblock {\em The Central Limit Theorem for Real and Banach Valued Random
  Variables}.
\newblock Wiley Series in Probability and Mathematical Statistics. John Wiley
  \& Sons, Inc., 1980.

\bibitem{AH:2012}
K.~E. Atkinson and W.~Han.
\newblock {\em Spherical harmonics and approximations on the unit sphere : an
  introduction}.
\newblock Lecture notes in mathematics; 2044. Springer, 2012.

\bibitem{ABW:2001}
S.~J. Axler, P.~Bourdon, and W.~Ramey.
\newblock {\em Harmonic function theory}.
\newblock Graduate texts in mathematics; 137. Springer, 2001.

\bibitem{BAH:1967}
R.~R. Bahadur.
\newblock Rates of convergence of estimates and test statistics.
\newblock {\em The Annals of Mathematical Statistics}, 38(2):303--324, 1967.

\bibitem{BAH:1971}
R.~R. Bahadur.
\newblock {\em Some limit theorems in statistics}.
\newblock CBMS-NSF regional conference series in applied mathematics; 4.
  Society for Industrial and Applied Mathematics, 1971.

\bibitem{B:1997}
J.~A. Baker.
\newblock Integration over spheres and the divergence theorem for balls.
\newblock {\em The American Mathematical Monthly}, 104(1):36--47, 1997.

\bibitem{BH:1991}
L.~Baringhaus and N.~Henze.
\newblock Limit distributions for measures of multivariate skewness and
  kurtosis based on projections.
\newblock {\em Journal of Multivariate Analysis}, 38(1):51 -- 69, 1991.

\bibitem{B:1974}
C.~Bingham.
\newblock An antipodally symmetric distribution on the sphere.
\newblock {\em The Annals of Statistics}, 2(6):1201--1225, 1974.

\bibitem{CCF:2009}
J.~A. Cuesta-Albertos, A.~Cuevas, and R.~Fraiman.
\newblock On projection-based tests for directional and compositional data.
\newblock {\em Statistics and Computing}, 19(4):367--380, 2009.

\bibitem{CPV:2017}
C.~Cutting, D.~Paindaveine, and T.~Verdebout.
\newblock {Testing uniformity on high-dimensional spheres against monotone
  rotationally symmetric alternatives}.
\newblock {\em The Annals of Statistics}, 45(3):1024 -- 1058, 2017.

\bibitem{CPV:2022}
C.~Cutting, D.~Paindaveine, and T.~Verdebout.
\newblock {Testing uniformity on high-dimensional spheres: The non-null
  behaviour of the Bingham test}.
\newblock {\em Annales de l'Institut Henri Poincaré, Probabilités et
  Statistiques}, 58(1):567 -- 602, 2022.

\bibitem{DX:2013}
F.~Dai and Y.~Xu.
\newblock {\em Approximation Theory and Harmonic Analysis on Spheres and
  Balls}.
\newblock Springer Monographs in Mathematics. Springer, 2013.

\bibitem{DFL:1985}
P.~J. Diggle, N.~I. Fisher, and A.~J. Lee.
\newblock A comparison of tests of uniformity for spherical data.
\newblock {\em Australian Journal of Statistics}, 27(1):53--59, 1985.

\bibitem{DDH:2020}
V.~Dutordoir, N.~Durrande, and J.~Hensman.
\newblock Sparse {G}aussian processes with spherical harmonic features.
\newblock In H.~D. III and A.~Singh, editors, {\em Proceedings of the 37th
  International Conference on Machine Learning}, Proceedings of Machine
  Learning Research; 119, pages 2793--2802. PMLR, 2020.

\bibitem{EHY:2018}
B.~Ebner, N.~Henze, and J.~E. Yukich.
\newblock Multivariate goodness-of-fit on flat and curved spaces via nearest
  neighbor distances.
\newblock {\em Journal of Multivariate Analysis}, 165:231--242, 2018.

\bibitem{F:2007}
A.~Figueiredo.
\newblock Comparison of tests of uniformity defined on the hypersphere.
\newblock {\em Statistics \& Probability Letters}, 77(3):329--334, 2007.

\bibitem{PPV:2021}
E.~García-Portugués, D.~Paindaveine, and T.~Verdebout.
\newblock On the power of {S}obolev tests for isotropy under local rotationally
  symmetric alternatives, 2021.
\newblock arXiv:2108.09874v1.

\bibitem{GNC:2020}
E.~Garc\'{i}a-Portugu\'{e}s, P.~Navarro-Esteban, and J.~A. Cuesta-Albertos.
\newblock A cram\'er-von mises test of uniformity on the hypersphere, 2020.
\newblock arXiv:2008.10767.

\bibitem{GV:2018}
E.~Garc\'{i}a-Portugu\'{e}s and T.~Verdebout.
\newblock An overview of uniformity tests on the hypersphere, 2018.
\newblock arXiv:1804.00286.

\bibitem{G:1975}
E.~M. {Gin\'e}.
\newblock {Invariant tests for uniformity on compact Riemannian manifolds based
  on Sobolev norms}.
\newblock {\em {The Annals of Statistics}}, 3:1243--1266, 1975.

\bibitem{G:1996}
H.~Groemer.
\newblock {\em Geometric Applications of Fourier Series and Spherical
  Harmonics}.
\newblock Encyclopedia of Mathematics and its Applications; 61. Cambridge
  University Press, 1996.

\bibitem{Mathematica}
W.~R. Inc.
\newblock Mathematica, {V}ersion 12.0.
\newblock Champaign, IL, 2019.

\bibitem{JMV:2020}
S.~R. Jammalamadaka, S.~Meintanis, and T.~Verdebout.
\newblock {On Sobolev tests of uniformity on the circle with an extension to
  the sphere}.
\newblock {\em Bernoulli}, 26(3):2226 -- 2252, 2020.

\bibitem{JS:2001}
S.~R. {Jammalamadaka} and A.~{Sen Gupta}.
\newblock {\em Topics in circular statistics}, volume~5.
\newblock Singapore: World Scientific, 2001.

\bibitem{KE:2013}
J.~T. Kent, A.~M. Ganeiber, and K.~V. Mardia.
\newblock A new method to simulate the bingham and related distributions in
  directional data analysis with applications, 2013.
\newblock arXiv:1310.8110v1.

\bibitem{K:1960}
N.~H. {Kuiper}.
\newblock Tests concerning random points on a circle.
\newblock {\em Nederlandse Akademie van Wetenschappen. Proceedings. Series A.
  Indagationes Mathematicae}, 63:38--47, 1960.

\bibitem{LT:2002}
M.~Ledoux and M.~Talagrand.
\newblock {\em Probability in Banach Spaces – Isoperimetry and Processes}.
\newblock A Series of Modern Surveys in Mathematics; 23. Springer, 2002.

\bibitem{LV:2017}
C.~Ley and T.~Verdebout.
\newblock {\em Modern directional statistics}.
\newblock Chapmann \& Hall/CRC interdisciplinary statistics series. CRC Press,
  2017.

\bibitem{LV:2019}
C.~{Ley} and T.~{Verdebout}, editors.
\newblock {\em {Applied directional statistics. Modern methods and case
  studies}}.
\newblock Boca Raton, FL: CRC Press, 2019.

\bibitem{MA:1973}
J.~F. Malkovich and A.~A. Afifi.
\newblock On tests for multivariate normality.
\newblock {\em Journal of the American Statistical Association},
  68(341):176--179, 1973.

\bibitem{MQ:2001}
A.~Manzotti and A.~J. Quiroz.
\newblock Spherical harmonics in quadratic forms for testing multivariate
  normality.
\newblock {\em Test}, 10(1):87--104, 2001.

\bibitem{MJ:2000}
K.~V. {Mardia} and P.~E. {Jupp}.
\newblock {\em Directional statistics}.
\newblock John Wiley \& Sons, Ltd., 2nd edition, 2000.

\bibitem{M:1998}
C.~M{\"u}ller.
\newblock {\em Analysis of Spherical Symmetries in Euclidean Spaces}.
\newblock Applied Mathematical Siences; 129. Springer, 1998.

\bibitem{N:1995}
J.~Nikitin.
\newblock {\em Asymptotic efficiency of nonparametric tests}.
\newblock Cambridge University Press, 1995.

\bibitem{rco:2019}
{R Core Team}.
\newblock {\em R: A Language and Environment for Statistical Computing}.
\newblock R Foundation for Statistical Computing, Vienna, Austria, 2019.

\bibitem{RAO:1972}
J.~Rao.
\newblock Bahadur efficiencies of some tests for uniformity on the circle.
\newblock {\em The Annals of Mathematical Statistics}, 43(2):468--479, 1972.

\bibitem{R:1919}
L.~Rayleigh.
\newblock On the problem of random vibrations, and of random flights in one,
  two, or three dimensions.
\newblock {\em The London, Edinburgh, and Dublin Philosophical Magazine and
  Journal of Science}, 37(220):321--347, 1919.

\bibitem{SLM:2012}
G.~Salamuni\'{c}car, S.~Lon\v{c}ari\'{c}, and E.~Mazarico.
\newblock {LU60645GT} and {MA132843GT} catalogues of lunar and martian impact
  craters developed using a crater shape-based interpolation crater detection
  algorithm for topography data.
\newblock {\em Planetary and Space Science}, 60(1):236--247, 2012.

\bibitem{Setal:2014}
G.~Salamuni\'{c}car, S.~Lon\v{c}ari\'{c}, P.~Pina, L.~Bandeira, and J.~Saraiva.
\newblock Integrated method for crater detection from topography and optical
  images and the new {PH9224GT} catalogue of phobos impact craters.
\newblock {\em Advances in Space Research}, 53(12):1798--1809, 2014.

\bibitem{SCH:2010}
B.~Schölkopf and A.~J. Smola.
\newblock {\em Learning with Kernels – Support Vector Machines,
  Regularization, Optimization and Beyond}.
\newblock Adaptive Computation and Machine Learning. The MIT Press, 2010.

\bibitem{S:1971}
E.~M. Stein and G.~Weiss.
\newblock {\em Introduction to Fourier Analysis on Euclidean Spaces}.
\newblock Princeton University Press, 1971.

\bibitem{W:1961}
G.~S. {Watson}.
\newblock {Goodness-of-fit tests on a circle}.
\newblock {\em {Biometrika}}, 48:109--114, 1961.

\end{thebibliography}
\bibliographystyle{abbrv}      
\begin{appendix}
\section{Some facts on \texorpdfstring{$d$}{Lg}-dimensional Legendre polynomials and spherical harmonics}\label{sec:AppA}
A $d$-dimensional spherical harmonic function of order $k\in\N_0$ is the restriction of a $d$-dimensional harmonic polynomial of order $k$ to $\Sp$. Let $\harmonicspace$ be the space of $d$-dimensional spherical harmonics of order $k\in\N_0$, and let $\nu_d(k)=\dim(\harmonicspace)$. We have
\begin{equation}\label{eq:dimsp}
\nu_d(k) = \binom{d+k-1}{k} - \binom{d+k-3}{k-2} = \frac{d+2k-2}{d+k-2}\binom{d+k-2}{d-2},
\end{equation}
where the second binomial coefficient after the first the equation is $0$, if $k-2<0$, and the fraction after the second equation is $1$, if $d=2$ and $k=0$.
A couple of noteworthy special cases of $\nu_d(k)$ are
\begin{equation}
    \nu_d(0) = 1, \quad \nu_d(1) = d, \quad \nu_d(2) = \frac{(d+2)(d-1)}{2}, \quad \nu_2(k) = 2 \, (k>0), \quad \nu_3(k) = 2k+1,
\end{equation}
see \cite{G:1996}, Section 3, for details on spherical harmonics. Two important theoretical results in the theory of spherical harmonics are the density of finite linear combinations of spherical harmonics in $L^2(\Sp, \text{d}\sigma)$ and the orthogonality property of spherical harmonics of different order, which can be phrased as follows.
\begin{theorem}[\cite{S:1971}, Chapter 4, Corollary 2.3] \label{harmonic_dense}
The set of finite linear combinations of elements of $\bigcup_{k=0}^\infty \harmonicspace$ is dense in $L^2(\Sp, \text{d}\sigma)$.
\end{theorem}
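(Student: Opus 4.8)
The plan is to reduce the statement to two classical facts: the density of polynomial restrictions in $L^2(\Sp,\text{d}\sigma)$, and the fact that every polynomial restricted to $\Sp$ already lies in the linear span of finitely many of the spaces $\mathcal{H}_j(\Sp)$. Let $\mathcal{A}$ denote the set of restrictions to $\Sp$ of real polynomials in $d$ variables. First I would show that $\mathcal{A}$ is dense in the Banach space $C(\Sp,\R)$ under $\|\cdot\|_\infty$. This is an application of the Stone--Weierstrass theorem: $\mathcal{A}$ is a subalgebra containing the constants, and it separates points of $\Sp$ because the coordinate functions $x\mapsto x_i$ already do so. Since $\sigma$ is a finite measure on the compact set $\Sp$, one has the norm comparison $\normLebesgue{f}\le\sqrt{|\Sp|}\,\|f\|_\infty$, so uniform approximation forces $L^2$-approximation; combining this with the density of $C(\Sp,\R)$ in $L^2(\Sp,\text{d}\sigma)$ (continuous functions are dense in $L^2$ of a compact metric space with finite Borel measure) yields that $\mathcal{A}$ is dense in $L^2(\Sp,\text{d}\sigma)$.

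The remaining and central step is to identify $\mathcal{A}$ with the span of the spherical harmonics. Here I would invoke the decomposition of homogeneous polynomials into harmonic pieces. Write $\mathcal{P}_k$ for the homogeneous polynomials of degree $k$ on $\R^d$ and $\mathcal{H}_k\subseteq\mathcal{P}_k$ for the harmonic ones, so that $\mathcal{H}_k(\Sp)$ consists exactly of the restrictions of elements of $\mathcal{H}_k$. The key algebraic identity is the direct-sum decomposition $\mathcal{P}_k=\mathcal{H}_k\oplus\|x\|^2\,\mathcal{P}_{k-2}$. Iterating it expresses any homogeneous $p$ of degree $k$ as $p(x)=\sum_{j\ge0}\|x\|^{2j}\,h_{k-2j}(x)$ with each $h_{k-2j}\in\mathcal{H}_{k-2j}$. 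Restricting to $\Sp$, where $\|x\|=1$, collapses this to a finite sum of spherical harmonics of degrees $k,k-2,\dots$. Splitting an arbitrary polynomial into its homogeneous components then gives $\mathcal{A}\subseteq\mathrm{span}\bigcup_{j=0}^\infty\mathcal{H}_j(\Sp)$, while the reverse inclusion is immediate since every spherical harmonic is by definition a restricted polynomial. Hence the span of the spherical harmonics coincides with $\mathcal{A}$ and is therefore dense in $L^2(\Sp,\text{d}\sigma)$.

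I expect the main obstacle to be establishing the splitting $\mathcal{P}_k=\mathcal{H}_k\oplus\|x\|^2\,\mathcal{P}_{k-2}$. I would prove it via the Fischer (apolar) bilinear form on $\mathcal{P}_k$, obtained by letting one polynomial act on another as a constant-coefficient differential operator, $\langle p,q\rangle=p(D)q$. This form is positive definite, and under it multiplication by $\|x\|^2$ (sending $\mathcal{P}_{k-2}$ into $\mathcal{P}_k$) is adjoint to the Laplacian $\Delta$ (sending $\mathcal{P}_k$ into $\mathcal{P}_{k-2}$). Consequently the orthogonal complement of $\|x\|^2\,\mathcal{P}_{k-2}$ inside $\mathcal{P}_k$ is precisely $\ker\Delta\cap\mathcal{P}_k=\mathcal{H}_k$, which gives the decomposition and, by induction on $k$, the full harmonic expansion. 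Everything else is routine: Stone--Weierstrass and the norm inequality are standard, and the passage through homogeneous components is purely algebraic. In particular, the dimension count $\dim\mathcal{H}_k=\nu_d(k)$ recorded in \eqref{eq:dimsp} is convenient bookkeeping but is not actually required for the density assertion and can be omitted.
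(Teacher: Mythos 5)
Your argument is correct and complete: Stone--Weierstrass gives uniform (hence $L^2$) density of restricted polynomials, and the Fischer-form adjointness of multiplication by $\|x\|^2$ and the Laplacian yields the decomposition $\mathcal{P}_k=\mathcal{H}_k\oplus\|x\|^2\mathcal{P}_{k-2}$, which collapses on $\Sp$ to express every restricted polynomial as a finite sum of spherical harmonics. Note that the paper itself offers no proof --- it quotes this as a classical result from \cite{S:1971} --- and your route is essentially the standard one found in that reference, so there is nothing to bridge or correct.
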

\begin{prop}[\cite{S:1971}, Chapter 4, Corollary 2.4] \label{harmonic_orthogonal}
For $\Phi \in \harmonicspace, \Psi \in \mathcal{H}_{l}(\Sp)$ with $k \neq l$, we have
\begin{equation*}
    \skalarproduktLebesgue{\Phi}{\Psi} = \int_{\Sp} \Phi(\omega) \Psi(\omega) \,
    \text{d}\sigma(\omega) = 0.
\end{equation*}
\end{prop}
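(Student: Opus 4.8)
The plan is to use the very definition of spherical harmonics recalled at the start of Appendix \ref{sec:AppA}: a function in $\harmonicspace$ is the restriction to $\Sp$ of a harmonic homogeneous polynomial of degree $k$. Orthogonality across different orders then drops out of Green's second identity on the unit ball, the cleanest route being to convert the vanishing of the Laplacians into a boundary integral and to evaluate the resulting normal derivatives via Euler's relation for homogeneous functions.

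First I would choose harmonic homogeneous polynomials $H_k, H_l : \R^d \to \R$ of respective degrees $k$ and $l$ with $H_k|_{\Sp} = \Phi$ and $H_l|_{\Sp} = \Psi$; these exist by the definition of $\harmonicspace$ and $\mathcal{H}_l(\Sp)$. Writing $B = \{x \in \R^d : \|x\| \le 1\}$, whose boundary is $\Sp$ with outward unit normal $\omega$ at the point $\omega \in \Sp$, Green's second identity gives
\begin{equation*}
\int_B \left( H_k \, \Delta H_l - H_l \, \Delta H_k \right) \text{d}x = \int_{\Sp} \left( H_k \, \partial_n H_l - H_l \, \partial_n H_k \right) \text{d}\sigma.
\end{equation*}
Since $H_k$ and $H_l$ are harmonic, the left-hand side vanishes. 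On $\Sp$ the outward normal derivative coincides with the radial derivative, so Euler's theorem for a function homogeneous of degree $k$ yields $\partial_n H_k(\omega) = \omega \cdot \nabla H_k(\omega) = k\,H_k(\omega) = k\,\Phi(\omega)$, and likewise $\partial_n H_l|_{\Sp} = l\,\Psi$. Substituting, the boundary integral becomes $(l-k)\int_{\Sp} \Phi\,\Psi\,\text{d}\sigma = (l-k)\,\skalarproduktLebesgue{\Phi}{\Psi}$. Combining the two evaluations gives $(l-k)\,\skalarproduktLebesgue{\Phi}{\Psi} = 0$, and since $k \neq l$ we conclude $\skalarproduktLebesgue{\Phi}{\Psi} = 0$, as claimed.

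I do not anticipate a genuine obstacle here, as this is a classical identity: the polynomials involved are $C^\infty$, so Green's identity applies without regularity caveats, and the only point requiring care is the identification of the normal derivative on $\Sp$ with $k$ times the restriction, which is precisely Euler's homogeneity relation. An alternative and essentially equivalent route would observe that $\Phi$ and $\Psi$ are eigenfunctions of the Laplace--Beltrami operator $\Delta_{\Sp}$ with eigenvalues $-k(k+d-2)$ and $-l(l+d-2)$; these are distinct for $k \neq l$, since $t \mapsto t(t+d-2)$ is strictly increasing on $[0,\infty)$ for $d \ge 2$, whence orthogonality follows from the self-adjointness of $\Delta_{\Sp}$ on the boundaryless manifold $\Sp$. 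I would present the Green's-identity version as the primary argument, since it stays at the level of polynomials in $\R^d$ and avoids invoking the full spectral theory of $\Delta_{\Sp}$.
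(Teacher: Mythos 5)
Your proof is correct and complete: Green's second identity on the unit ball plus Euler's homogeneity relation yields $(l-k)\skalarproduktLebesgue{\Phi}{\Psi}=0$, which is exactly the classical argument. The paper itself offers no proof — it simply cites \cite{S:1971}, Chapter 4, Corollary 2.4 — and your argument is essentially the one given in that reference, so there is nothing to fault.
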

With these statements, it is possible to represent a function $f \in L^2(\Sp, \text{d}\sigma)$ uniquely as a series of spherical harmonics. For this purpose, we consider an orthonormal basis $\{\phi_{k,1},\ldots,\phi_{k,\nu_d(k)}\}$ of $\harmonicspace$ for each $k \in \N_0$. Then $\bigcup_{k=0}^\infty \{\phi_{k,1},\ldots,\phi_{k,\nu_d(k)}\}$ is an orthonormal basis of $L^2(\Sp, \text{d}\sigma)$, and we can write
\begin{equation} \label{L^2_harmonic_expansion}
        f \overset{L^2}{=} \sum_{k=0}^\infty \Psi_k
    \end{equation}
with $\Psi_k = \sum_{j=1}^{\nu_d(k)} \skalarproduktLebesgue{f}{\phi_{k,j}} \phi_{k,j} \in \harmonicspace$. The following Theorem is called the Funk--Hecke-Theorem and is used frequently.
\begin{theorem}[Funk--Hecke-Theorem, \cite{G:1996}, Theorem 3.4.1] \label{funk-hecke}
Let $k \in \N_0$ and $u \in \Sp$. If $\Lambda$ is a bounded, integrable function on $[-1,1]$ and $\phi \in \harmonicspace$, then the function $\Lambda(\skalarprodukt{u}{\cdot}) \colon \Sp \to \R; \; x \mapsto \Lambda(\skalarprodukt{u}{x})$ is integrable and
\begin{equation*}
    \int_{\Sp} \Lambda(\skalarprodukt{u}{x}) \, \phi(x) \, \text{d}\sigma(x) = \lambda_k \phi(u)
\end{equation*}
with
\begin{equation*}
    \lambda_k = |\Sp[d-2]| \int_{-1}^1 P_k^{\, d}(t) \Lambda(t) (1-t^2)^{(d-3)/2} \, \text{d}t,
\end{equation*}
where $P_k^{\, d}$ is the $d$-dimensional Legendre polynomial of order $k$.
\end{theorem}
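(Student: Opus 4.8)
The plan is to show that the integral operator $T_\Lambda f(u)=\int_{\Sp}\Lambda(u^\top x)f(x)\,\text{d}\sigma(x)$ acts on each space $\harmonicspace$ as multiplication by a scalar $\lambda_k$, and then to identify that scalar by a single explicit evaluation. That the integrand is integrable is immediate: $\Lambda$ is bounded, $\phi$ is continuous, and $\Sp$ has finite surface measure, so the left-hand side is well defined for every $u\in\Sp$.

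First I would fix $u\in\Sp$ and regard $x\mapsto\Lambda(u^\top x)$ as an element of $L^2(\Sp,\text{d}\sigma)$, which is legitimate since the function is bounded and therefore square integrable. By Theorem \ref{harmonic_dense} and the expansion \eqref{L^2_harmonic_expansion} it admits a representation $\Lambda(u^\top\cdot)\overset{L^2}{=}\sum_{k=0}^\infty\Psi_k$ with $\Psi_k\in\harmonicspace$. The decisive structural point is that the kernel is \emph{zonal} about $u$: it is invariant under every rotation $A\in\text{SO}(d)$ with $Au=u$. Since each $\harmonicspace$ is an invariant subspace for the rotation action, the orthogonal projection onto $\harmonicspace$ commutes with that action, and hence each component $\Psi_k$ is again invariant under all rotations fixing $u$. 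The space of degree-$k$ harmonics with this invariance is one-dimensional and spanned by $x\mapsto P_k^{\,d}(u^\top x)$, so $\Psi_k(x)=a_k\,P_k^{\,d}(u^\top x)$ for scalars $a_k$ depending only on $k$, $d$ and $\Lambda$.

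Next I would integrate against a fixed $\phi\in\harmonicspace$. By the orthogonality of harmonics of different order (Proposition \ref{harmonic_orthogonal}) only the degree-$k$ term of the expansion contributes, so
\begin{equation*}
    \int_{\Sp}\Lambda(u^\top x)\phi(x)\,\text{d}\sigma(x)=a_k\int_{\Sp}P_k^{\,d}(u^\top x)\phi(x)\,\text{d}\sigma(x).
\end{equation*}
The remaining integral is handled by the addition theorem $\sum_{j=1}^{\nu_d(k)}\phi_{k,j}(u)\phi_{k,j}(x)=\frac{\nu_d(k)}{|\Sp|}P_k^{\,d}(u^\top x)$: expanding $\phi=\sum_j c_j\phi_{k,j}$ in an orthonormal basis of $\harmonicspace$ and using $c_j=\langle\phi,\phi_{k,j}\rangle_{L^2}$ shows that the zonal harmonic reproduces $\phi(u)$, whence $\int_{\Sp}P_k^{\,d}(u^\top x)\phi(x)\,\text{d}\sigma(x)=\frac{|\Sp|}{\nu_d(k)}\phi(u)$. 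This establishes $T_\Lambda\phi=\lambda_k\phi$ for a constant $\lambda_k$ independent of $\phi$. To evaluate $\lambda_k$ I would exploit exactly this independence: taking a pole $e\in\Sp$, the test harmonic $\phi(x)=P_k^{\,d}(e^\top x)\in\harmonicspace$ and $u=e$, and using $P_k^{\,d}(1)=1$, the eigenvalue relation collapses to $\lambda_k=\int_{\Sp}\Lambda(e^\top x)P_k^{\,d}(e^\top x)\,\text{d}\sigma(x)$. Disintegrating the surface measure along the height $t=e^\top x\in[-1,1]$, whose latitude is a $(d-2)$-sphere of radius $\sqrt{1-t^2}$, turns any zonal integral into $\int_{\Sp}g(e^\top x)\,\text{d}\sigma(x)=|\Sp[d-2]|\int_{-1}^1 g(t)(1-t^2)^{(d-3)/2}\,\text{d}t$; applied to $g=\Lambda\cdot P_k^{\,d}$ this is exactly the claimed formula.

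The main obstacle is the structural claim in the second step, namely that the degree-$k$ component $\Psi_k$ of the zonal kernel is proportional to $P_k^{\,d}(u^\top\cdot)$, i.e. that the space of degree-$k$ spherical harmonics invariant under the stabiliser $\text{SO}(d-1)$ of $u$ is one-dimensional. This is the representation-theoretic heart of the theorem — equivalent to the irreducibility of $\harmonicspace$ as an $\text{SO}(d)$-module — and is the one point that genuinely uses the structure of spherical harmonics rather than formal manipulation. Once it is granted, the remaining ingredients (orthogonality, the addition theorem, and the disintegration of the surface measure) are standard, and the convergence and measurability issues are controlled by the boundedness of $\Lambda$.
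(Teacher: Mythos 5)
First, a point of order: the paper never proves Theorem \ref{funk-hecke} --- it is quoted verbatim from \cite{G:1996}, Theorem 3.4.1, as background material, so there is no internal proof to compare you against and your attempt must stand on its own. Its routine parts do stand: integrability, the term-by-term pairing of the $L^2$-expansion with $\phi$ via Proposition \ref{harmonic_orthogonal}, the reproducing identity $\int_{\Sp}P_k^{\,d}(\skalarprodukt{u}{x})\phi(x)\,\text{d}\sigma(x)=\tfrac{|\Sp|}{\nu_d(k)}\phi(u)$ obtained from the addition theorem (Theorem \ref{def_legendre}), the evaluation at the pole using $P_k^{\,d}(1)=1$, and the slicing formula for zonal integrals are all correct, and your pole evaluation even shows a posteriori that the eigenvalue does not depend on $u$. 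The genuine gap is exactly the step you flag and then grant: that the degree-$k$ component of $\Lambda(\skalarprodukt{u}{\cdot})$ is a scalar multiple of $P_k^{\,d}(\skalarprodukt{u}{\cdot})$. That uniqueness-of-zonal-harmonics statement \emph{is} the mathematical content of the Funk--Hecke theorem; once it is assumed, everything else is bookkeeping, so a proof that postulates it has not proved the theorem. Moreover, in the form you state it the claim is false in a case this paper actively uses: for $d=2$ the stabiliser of $u$ in $\mathrm{SO}(2)$ is trivial, so ``invariant under every rotation fixing $u$'' is an empty condition and the invariant subspace of $\mathcal{H}_k(\Sp[1])$ is all of it --- two-dimensional, spanned by $\cos k\theta$ and $\sin k\theta$ --- not one-dimensional. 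The repair is to work with the full stabiliser in $\mathrm{O}(d)$: the kernel $\Lambda(\skalarprodukt{u}{\cdot})$ is also invariant under reflections fixing $u$, the orbits of this larger group are the level sets of $x\mapsto\skalarprodukt{u}{x}$ for every $d\ge2$, and then an invariant $\Psi\in\harmonicspace$ has the form $g(\skalarprodukt{u}{\cdot})$ with $g$ a polynomial of degree at most $k$; writing $g=\sum_{m\le k}c_mP_m^{\,d}$ (possible since $\deg P_m^{\,d}=m$ by Theorem \ref{def_legendre}) and invoking Proposition \ref{harmonic_orthogonal} kills every coefficient with $m\ne k$.

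Alternatively, you can bypass the representation-theoretic step entirely, using only facts already quoted in the paper's appendix. Since $\deg P_m^{\,d}=m$, the system $\{P_m^{\,d}\}_{m\ge 0}$ spans all polynomials and is therefore, by Weierstrass approximation, complete in $L^2\bigl([-1,1],(1-t^2)^{(d-3)/2}\,\text{d}t\bigr)$; expand $\Lambda=\sum_{m\ge0}b_mP_m^{\,d}$ in that space. Your own slicing formula transfers this expansion isometrically (up to the factor $|\Sp[d-2]|$) into an $L^2(\Sp,\text{d}\sigma)$ expansion of $\Lambda(\skalarprodukt{u}{\cdot})$. Pairing with $\phi$, Proposition \ref{harmonic_orthogonal} removes all terms with $m\ne k$, the addition theorem reproduces $\phi(u)$ from the $m=k$ term, and Proposition \ref{legendre_orthogonal} converts $b_k\,|\Sp|/\nu_d(k)$ into $|\Sp[d-2]|\int_{-1}^1P_k^{\,d}(t)\Lambda(t)(1-t^2)^{(d-3)/2}\,\text{d}t$. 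This route proves the theorem for all $d\ge2$ at one stroke, with the completeness of the Legendre system playing the role that the irreducibility argument plays in your sketch.
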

The existence and uniqueness of higher dimensional Legendre polynomials are stated in the following Theorem.
\begin{theorem}[\cite{G:1996}, Theorem 3.3.3] \label{def_legendre}
For each $k \in \N_0$, there is exactly one polynomial $P_k^{\, d}$ on $[-1,1]$ with the property: If $\{ \phi_1,\ldots,\phi_{\nu_d(k)} \}$ is an orthonormal basis of  $\harmonicspace$, then
\begin{equation*}
    \sum_{i=1}^{\nu_d(k)} \phi_i(u) \phi_i(v) = \frac{\nu_d(k)}{|\Sp|} P_k^{\, d}(\skalarprodukt{u}{v}), \quad u, v \in \Sp.
\end{equation*}
The degree of $P_k^{\, d}$ is $k$, and the function $P_k^{\, d}(\skalarprodukt{u}{\cdot}) \colon \Sp \to \R ; \; v \mapsto P_k^{\, d}(\skalarprodukt{u}{v})$ is for fixed $u \in \Sp$ a $d$-dimensional spherical harmonic of order $k$. Furthermore, $P_k^{\, d}$ is an even function, whenever $k$ is even, and an odd function, whenever $k$ is odd.
\end{theorem}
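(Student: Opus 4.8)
The plan is to build $P_k^{\, d}$ directly from the reproducing kernel of $\harmonicspace$ and to read off every stated property from it. Fix $k \in \N_0$, choose an orthonormal basis $\{\phi_1,\ldots,\phi_{\nu_d(k)}\}$ of $\harmonicspace$, and set $F_k(u,v) = \sum_{i=1}^{\nu_d(k)} \phi_i(u)\phi_i(v)$ for $u,v\in\Sp$. First I would check that $F_k$ is independent of the chosen basis, since this is exactly what makes the asserted identity meaningful and, later, unique. If $\psi_i = \sum_j A_{ij}\phi_j$ is a second orthonormal basis, then $A$ is orthogonal and
\[
\sum_{i=1}^{\nu_d(k)} \psi_i(u)\psi_i(v) = \sum_{j,l} \Big(\sum_i A_{ij}A_{il}\Big)\phi_j(u)\phi_l(v) = \sum_{j} \phi_j(u)\phi_j(v),
\]
using $A^\top A = I$, so $F_k$ depends only on $k$ and not on the basis.

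Next I would exploit rotational symmetry. For $A\in\mbox{SO}(d)$ the functions $\phi_i\circ A$ are again harmonic of degree $k$ (harmonicity is preserved by rotations) and orthonormal (the measure $\sigma$ is rotation invariant), hence form another orthonormal basis of $\harmonicspace$; by the basis-independence just established, $F_k(Au,Av)=F_k(u,v)$ for all $A\in\mbox{SO}(d)$. Since $\mbox{SO}(d)$ acts transitively on the pairs $(u,v)\in\Sp\times\Sp$ with a prescribed value of $\skalarprodukt{u}{v}$, the kernel $F_k(u,v)$ depends only on $t=\skalarprodukt{u}{v}$, and I may write $F_k(u,v)=g_k(\skalarprodukt{u}{v})$ for some $g_k\colon[-1,1]\to\R$.

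It then remains to identify $g_k$ as a polynomial of degree $k$ and to normalize. Fixing a pole $e\in\Sp$, the map $v\mapsto F_k(e,v)=\sum_i\phi_i(e)\phi_i(v)$ is a linear combination of the $\phi_i$, hence lies in $\harmonicspace$; being a function of $\skalarprodukt{e}{v}$ alone, it is a zonal spherical harmonic, which forces $g_k$ to be a polynomial of degree exactly $k$ (expressing the restriction in $t$ and $1-t^2$, the degree-$k$ harmonic supplies the top term $t^k$ with nonzero coefficient, which cannot be cancelled). To fix the constant I would use that, by rotation invariance, $F_k(u,u)=g_k(1)$ is independent of $u$, while $\int_{\Sp}F_k(u,u)\,\mbox{d}\sigma(u)=\sum_i\int_{\Sp}\phi_i^2\,\mbox{d}\sigma=\nu_d(k)$, whence $g_k(1)=\nu_d(k)/|\Sp|$. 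Defining $P_k^{\, d}$ by $g_k(t)=\frac{\nu_d(k)}{|\Sp|}P_k^{\, d}(t)$, equivalently by the normalization $P_k^{\, d}(1)=1$, yields the displayed addition formula; uniqueness is immediate, since $g_k$ is basis-independent and a polynomial is determined by its restriction to $[-1,1]$.

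Finally, for fixed $u$ the function $v\mapsto P_k^{\, d}(\skalarprodukt{u}{v})$ is a scalar multiple of $F_k(u,\cdot)\in\harmonicspace$, hence a spherical harmonic of order $k$; and the parity statement follows from the antipodal action, because every homogeneous polynomial of degree $k$ satisfies $p(-x)=(-1)^k p(x)$, so $\phi_i(-v)=(-1)^k\phi_i(v)$ on $\Sp$, giving $F_k(u,-v)=(-1)^k F_k(u,v)$ and thus $P_k^{\, d}(-t)=(-1)^k P_k^{\, d}(t)$. I expect the main obstacle to be the reduction to a single variable together with the degree count: showing that $g_k$ is genuinely a polynomial of degree exactly $k$ rests on the transitivity of the $\mbox{SO}(d)$-action on equi-inner-product pairs and on the structure of zonal harmonics, which is the conceptual heart of the argument, whereas the normalization, uniqueness, and parity checks are routine.
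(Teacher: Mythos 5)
You should first note that the paper itself contains no proof of this statement: it is imported verbatim from \cite{G:1996}, Theorem 3.3.3, as background material, so there is no internal argument to compare yours against. Your construction --- define the kernel $F_k(u,v)=\sum_{i=1}^{\nu_d(k)}\phi_i(u)\phi_i(v)$, check basis independence, deduce invariance under the isometry group, reduce to a function $g_k$ of $t=\skalarprodukt{u}{v}$, identify $F_k(e,\cdot)$ as a zonal harmonic, and normalize via $\int_{\Sp}F_k(u,u)\,\mbox{d}\sigma(u)=\nu_d(k)$ --- is the standard reproducing-kernel proof of the addition theorem, and the normalization, uniqueness, ``harmonic in $v$ for fixed $u$'', and parity steps are all handled correctly.

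Two steps, however, are asserted where they need proof, and one of them is false as stated. First, $\mbox{SO}(d)$ does \emph{not} act transitively on pairs with prescribed inner product when $d=2$, a case the paper includes: rotations of $\Sp[1]$ preserve the signed angle, so $(u,R_\theta u)$ and $(u,R_{-\theta}u)$ lie in different $\mbox{SO}(2)$-orbits although they have the same inner product. The repair is cheap: your invariance argument applies verbatim to all of $\mbox{O}(d)$, since reflections also preserve harmonicity and the measure $\sigma$, and two pairs with equal Gram matrices are always related by an element of $\mbox{O}(d)$; alternatively, combine $\mbox{SO}(d)$-invariance with the symmetry $F_k(u,v)=F_k(v,u)$. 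Second, the parenthetical ``the degree-$k$ harmonic supplies the top term $t^k$ with nonzero coefficient, which cannot be cancelled'' is precisely what has to be shown; as written it is an assertion, not an argument. A complete version runs as follows: invariance of $F_k(e,\cdot)$ under the stabilizer $\mbox{O}(d-1)$ of $e$ shows that the underlying homogeneous harmonic polynomial is a polynomial in $\skalarprodukt{e}{x}$ and $\|x-(\skalarprodukt{e}{x})e\|^2$, so on $\Sp$ one gets $g_k(t)=\sum_{j}c_j\,t^{\,j}(1-t^2)^{(k-j)/2}$, a polynomial of degree at most $k$; if $\deg g_k=m<k$, then $g_k(\skalarprodukt{e}{\cdot})$ is the restriction to $\Sp$ of a polynomial of degree $m$ on $\R^d$, hence (by the standard decomposition $p=\sum_{j\ge0}\|x\|^{2j}H_{m-2j}$ with $H_i$ harmonic) lies in $\bigoplus_{l\le m}\mathcal{H}_l(\Sp)$, which is orthogonal to $\harmonicspace$; this would force $F_k(e,\cdot)\equiv0$, contradicting $g_k(1)=\nu_d(k)/|\Sp|>0$. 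Working with $\mbox{O}(d)$ throughout also rescues this step at $d=2$, where the $\mbox{SO}(2)$-stabilizer of $e$ is trivial but the reflection fixing $e$ forces evenness in the angle. With these two local repairs your proof is complete.
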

The polynomial $P_k^{\, d}$ is called the $d$-dimensional Legendre polynomial of order $k$. Legendre polynomials of different orders fulfill certain orthogonality properties. To state these properties, we introduce the weighted scalar product
    \begin{equation} \label{legendre_scalarproduct}
        \left\langle f, g \right\rangle = \int_{-1}^1 f(t) g(t) (1-t^2)^{(d-3)/2} \, \text{d}t
    \end{equation}
for bounded and integrable functions $f, g$ on $[-1,1]$. The following proposition shows that Legendre polynomials are orthogonal w.r.t. this scalar product.
\begin{prop}[\cite{G:1996}, Proposition 3.3.6] \label{legendre_orthogonal}
Let $k, l \in \N_0$. If $P_k^{\, d}$ and $P_l^{\, d}$ are $d$-dimensional Legendre polynomials of order $k$ and $l$, respectively, then
\begin{equation*}
    \left\langle P_k^{\, d}, P_l^{\, d} \right\rangle
    = \delta_{kl} \frac{|\Sp|}{\nu_d(k) \, |\Sp[d-2]|}
    = \delta_{kl} \frac{\sqrt{\pi} \, \Gamma((d-1)/2)}{\nu_d(k) \, \Gamma(d/2)}.
\end{equation*}
\end{prop}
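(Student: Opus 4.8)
The plan is to transfer the one-dimensional weighted integral in \eqref{legendre_scalarproduct} into a surface integral over $\Sp$ and then read off the result from the orthogonality of spherical harmonics of distinct orders. The central observation is that the Funk--Hecke Theorem \ref{funk-hecke}, applied with Funk--Hecke order $0$ (so that $\phi \equiv 1 \in \mathcal{H}_0(\Sp)$ and the eigenvalue is $\lambda_0$) and with the bounded integrable multiplier $\Lambda(t) = P_k^{\, d}(t)\,P_l^{\, d}(t)$, yields for any fixed $u \in \Sp$ the transfer identity
\[
\int_{\Sp} P_k^{\, d}(\skalarprodukt{u}{x})\, P_l^{\, d}(\skalarprodukt{u}{x}) \, \text{d}\sigma(x) = |\Sp[d-2]| \left\langle P_k^{\, d}, P_l^{\, d} \right\rangle .
\]
Indeed, $\lambda_0 = |\Sp[d-2]| \int_{-1}^1 P_0^{\, d}(t)\,\Lambda(t)(1-t^2)^{(d-3)/2}\,\text{d}t$ collapses exactly to $|\Sp[d-2]|\langle P_k^{\, d}, P_l^{\, d}\rangle$ because $P_0^{\, d}\equiv 1$, while the left-hand side of Funk--Hecke is the displayed surface integral. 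The polynomial multiplier is trivially bounded and integrable on $[-1,1]$, so the hypotheses are met.

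For the off-diagonal case $k \neq l$ I would invoke Theorem \ref{def_legendre}: for fixed $u$ the zonal functions $x \mapsto P_k^{\, d}(\skalarprodukt{u}{x})$ and $x \mapsto P_l^{\, d}(\skalarprodukt{u}{x})$ are spherical harmonics of order $k$ and $l$, respectively. Hence the left-hand side of the transfer identity is the $L^2(\Sp,\text{d}\sigma)$-scalar product of a degree-$k$ and a degree-$l$ spherical harmonic, which vanishes by Proposition \ref{harmonic_orthogonal}. This immediately produces the factor $\delta_{kl}$, i.e. $\left\langle P_k^{\, d}, P_l^{\, d}\right\rangle = 0$ for $k \neq l$.

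For the diagonal case $k = l$ I would evaluate the surface integral independently through the addition formula of Theorem \ref{def_legendre}. Fixing an orthonormal basis $\{\phi_{k,1},\ldots,\phi_{k,\nu_d(k)}\}$ of $\harmonicspace$, one writes $P_k^{\, d}(\skalarprodukt{u}{x}) = \tfrac{|\Sp|}{\nu_d(k)}\sum_i \phi_{k,i}(u)\phi_{k,i}(x)$, squares, integrates in $x$, and uses orthonormality to collapse the double sum to $\tfrac{|\Sp|^2}{\nu_d(k)^2}\sum_i \phi_{k,i}(u)^2$. Re-applying the addition formula at $v = u$ turns $\sum_i \phi_{k,i}(u)^2$ into $\tfrac{\nu_d(k)}{|\Sp|}P_k^{\, d}(1)$, so the integral equals $\tfrac{|\Sp|}{\nu_d(k)}P_k^{\, d}(1)$. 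A one-line preliminary establishes $P_k^{\, d}(1)=1$: integrating the diagonal addition formula over $u\in\Sp$ gives $\nu_d(k)$ on the left (orthonormality) and $\nu_d(k)\,P_k^{\, d}(1)$ on the right. Combined with the transfer identity, this gives $\left\langle P_k^{\, d}, P_k^{\, d}\right\rangle = |\Sp|/(\nu_d(k)\,|\Sp[d-2]|)$, the first claimed value.

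The final equality is then routine: substituting the surface areas $|\Sp| = 2\pi^{d/2}/\Gamma(d/2)$ and $|\Sp[d-2]| = 2\pi^{(d-1)/2}/\Gamma((d-1)/2)$ gives $|\Sp|/|\Sp[d-2]| = \sqrt{\pi}\,\Gamma((d-1)/2)/\Gamma(d/2)$, whence the Gamma-function form. I do not expect a genuine obstacle; the only points requiring care are the bookkeeping of the normalizing constants $|\Sp|$, $|\Sp[d-2]|$ and $\nu_d(k)$ as they pass through the addition formula, and the clean separation of the two index roles in the Funk--Hecke application (the order $0$ of the harmonic $\phi$ versus the degrees $k,l$ hidden inside the multiplier $\Lambda$).
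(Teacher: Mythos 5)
Your proposal is correct. Note that the paper does not prove this proposition at all --- it is quoted verbatim from \cite{G:1996}, Proposition 3.3.6, as background material --- so there is no in-paper proof to compare against; what you have written is essentially the standard textbook argument, assembled from tools the paper does state (Theorem \ref{funk-hecke}, Theorem \ref{def_legendre}, Proposition \ref{harmonic_orthogonal}). Two small remarks: your application of Funk--Hecke with $\phi \equiv 1$ is nothing more than the slice-integration formula $\int_{\Sp} F(\skalarprodukt{u}{x})\,\text{d}\sigma(x) = |\Sp[d-2]|\int_{-1}^1 F(t)(1-t^2)^{(d-3)/2}\,\text{d}t$, which is logically prior to the full Funk--Hecke theorem, so invoking it this way avoids any worry of circularity (in \cite{G:1996} the Funk--Hecke theorem appears \emph{after} this orthogonality result); and the fact $P_k^{\, d}(1)=1$, which you rederive from the addition formula, is already available in the paper as Remark \ref{legendre_schranke}, so you could simply cite it. Your bookkeeping of the constants is consistent with the normalization implicit in Theorem \ref{def_legendre} (orthonormality with respect to $\sigma$, not $\sigma/|\Sp|$), and the final Gamma-function identity follows correctly from $|\Sp| = 2\pi^{d/2}/\Gamma(d/2)$.
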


\begin{remark}[\cite{G:1996}, Lemma 3.3.5] \label{legendre_schranke}
Let $k \in \N_0$. For the $d$-dimensional Legendre polynomial of order $k$, we have $|P_k^{\, d}(t)| \leq 1$ for all $t \in [-1,1]$ and $P_k^{\, d}(1)=1$.
\end{remark}
The next result gives two explicit formulas for the calculation of Legendre polynomials of arbitrary dimension and order.
\begin{prop}[\cite{M:1998}, Section 1.2, S.16 and Lemma 1.6.1] \label{legendre_explizit}
For $k \in \N_0$, we have
\begin{equation*}
    P_k^{\, d}(t)
    = k! \, \Gamma\left(\frac{d-1}{2}\right) \sum_{l=0}^{\lfloor \frac{k}{2} \rfloor} \left( -\frac{1}{4} \right)^l \frac{(1-t^2)^l t^{\, k-2l}}{l! \, (k-2l)! \, \Gamma(l+\frac{d-1}{2})}
    = \sum_{l=0}^{\lfloor \frac{k}{2} \rfloor} a_{2l,k} t^{k-2l}, \quad t \in [-1,1],
\end{equation*}
with
\begin{align*}
    &a_{0, 0} = 1, \\
    &a_{2l,k} = \left( - \frac{1}{4} \right)^l \frac{\Gamma(d-1)}{\Gamma(d/2)} \frac{2^{k-1} k!}{(k+d-3)!} \frac{\Gamma(k-l+(d-2)/2)}{l! (k-2l)!}, \quad l \in \{ 0,\ldots,\lfloor k/2 \rfloor \}, \; k > 0,
\end{align*}
where $\lfloor \cdot \rfloor$ is the lower Gauss bracket.
\end{prop}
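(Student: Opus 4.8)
The plan is to identify the right-hand side with $P_k^{\, d}$ by means of the uniqueness statement in Theorem \ref{def_legendre}, and only afterwards to reconcile the two displayed expressions by a purely algebraic rearrangement. Write $Q_k(t)$ for the first sum. Proposition \ref{legendre_orthogonal}, together with the fact that each $P_j^{\, d}$ has exact degree $j$ (Theorem \ref{def_legendre}), shows that $P_k^{\, d}$ is orthogonal, with respect to the weighted scalar product \eqref{legendre_scalarproduct}, to every polynomial of degree $<k$, since $\{P_0^{\, d},\ldots,P_{k-1}^{\, d}\}$ is a basis of that space. As a positive weight admits, for each degree, a unique orthogonal polynomial up to a scalar, and the scalar is fixed by $P_k^{\, d}(1)=1$ (Remark \ref{legendre_schranke}), it suffices to verify that $Q_k$ (i) is a polynomial of degree at most $k$ with the parity of $k$, (ii) satisfies $Q_k(1)=1$, and (iii) is orthogonal to $t^m$ for $0\le m<k$.

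Properties (i) and (ii) are read off from the first form: each summand $(1-t^2)^l\,t^{\,k-2l}$ has degree $k$ and involves only powers congruent to $k$ modulo $2$, and since the factor $(1-t^2)^l$ vanishes at $t=1$ for $l\ge1$, only the $l=0$ summand survives there, giving $Q_k(1)=k!\,\Gamma(\tfrac{d-1}{2})/(k!\,\Gamma(\tfrac{d-1}{2}))=1$. For (iii) I would substitute the expansion and reduce each contribution to a Beta integral via $\int_{-1}^1 t^{2p}(1-t^2)^{(d-3)/2}\,\mathrm{d}t = B\!\left(p+\tfrac12,\tfrac{d-1}{2}\right)$; the remaining finite sum over $l$ collapses by a terminating hypergeometric summation of Chu--Vandermonde type. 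Once (iii) holds, the fact that $Q_k(1)=1$ means $Q_k\neq0$, and orthogonality to all lower degrees forces the degree of $Q_k$ to be exactly $k$ (a nonzero polynomial of degree $<k$ cannot be orthogonal to itself); uniqueness then gives $Q_k=P_k^{\, d}$.

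Alternatively, orthogonality becomes transparent through the Rodrigues representation $P_k^{\, d}(t)=c_{k,d}\,(1-t^2)^{(3-d)/2}\,\frac{\mathrm{d}^k}{\mathrm{d}t^k}(1-t^2)^{k+(d-3)/2}$, in which $k$ integrations by parts annihilate $t^m$ for $m<k$, the boundary terms vanishing because for $d\ge2$ every derivative of order $<k$ of $(1-t^2)^{k+(d-3)/2}$ still carries a positive power of $1-t^2$; here the constant $c_{k,d}$ is fixed by a one-sided limit at $t=1$. This route trades the Chu--Vandermonde computation for the task of recovering the explicit coefficients from the $k$-th derivative, so it does not avoid the bookkeeping but makes the orthogonality conceptually clean.

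Finally, to pass from the first to the second form I would expand $(1-t^2)^l=\sum_{i=0}^{l}\binom{l}{i}(-1)^i t^{2i}$, collect the coefficient of each fixed power $t^{\,k-2l'}$ by setting $l'=l-i$, and simplify the resulting single sum over $l$ — again a terminating Chu--Vandermonde evaluation — together with the Gamma duplication formula, thereby obtaining the closed form of $a_{2l,k}$ and the base value $a_{0,0}=1$. I expect the main obstacle to be precisely these summation identities and the careful tracking of the interlocking Gamma and factorial factors; the only other delicate point is the endpoint normalization in the Rodrigues approach, where the prefactor $(1-t^2)^{(3-d)/2}$ is singular at $t=\pm1$ and must be handled by a limit.
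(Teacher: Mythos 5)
The paper itself contains no proof of this proposition: it is imported verbatim from M\"uller \cite{M:1998} (Section 1.2 and Lemma 1.6.1), so your argument is necessarily a different route --- a self-contained derivation --- and it is a sound one. The characterization you invoke is legitimately available inside the paper: Theorem \ref{def_legendre} gives existence, exact degree and parity of $P_k^{\,d}$, Proposition \ref{legendre_orthogonal} gives orthogonality of Legendre polynomials of different orders (hence, since $\{P_0^{\,d},\ldots,P_{k-1}^{\,d}\}$ spans the polynomials of degree $<k$, orthogonality of $P_k^{\,d}$ to that whole space), and Remark \ref{legendre_schranke} fixes the normalization $P_k^{\,d}(1)=1$; together with the standard uniqueness (up to scalar) of the degree-$k$ orthogonal polynomial for a positive weight, and your correct observation that orthogonality to all lower degrees plus $Q_k(1)=1$ forces $\deg Q_k = k$, this pins down $Q_k = P_k^{\,d}$. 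The computations you defer do in fact close: after the Beta-integral step the factor $\Gamma\bigl(l+\tfrac{d-1}{2}\bigr)$ in the denominator cancels against the Beta function, and with $s=(k+m+1)/2$ the orthogonality condition reduces to
\begin{equation*}
\sum_{l=0}^{\lfloor k/2\rfloor} \left(-\tfrac14\right)^{l}\frac{\Gamma(s-l)}{l!\,(k-2l)!}
=\frac{\Gamma(s)}{k!}\;{}_2F_1\!\left(-\tfrac{k}{2},\tfrac{1-k}{2};\,1-s;\,1\right),
\end{equation*}
whose Chu--Vandermonde evaluation produces a Pochhammer factor (of the form $(-m/2)_{\lfloor k/2\rfloor}$ for $k$ even) containing a zero precisely when $0\le m<k$ and $k+m$ is even; the odd-parity case is automatic. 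The coefficient extraction for $a_{2l,k}$ via expanding $(1-t^2)^l$ plus the duplication formula likewise checks out (it reproduces, e.g., $a_{0,2}=d/(d-1)$). What your route buys is independence from \cite{M:1998} at the cost of hypergeometric bookkeeping; what it still owes is the explicit execution of those two terminating summations, which are asserted rather than displayed --- they are true, but they are the entire technical content, so a complete write-up would have to include them. The Rodrigues alternative you sketch is also viable but redundant given the first argument.
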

With these formulas, it is straightforward to obtain the first seven Legendre polynomials, which are given by
\begingroup
\allowdisplaybreaks
\begin{align*}
    &P_0^{\, d}(t) = 1, \quad P_1^{\, d}(t) = t, \\[0.25cm]
    &P_2^{\, d}(t) = \frac{1}{d-1}[d t^2 - 1], \\[0.25cm]
    &P_3^{\, d}(t) = \frac{1}{d-1}[(d+2) t^3 - 3t], \\[0.25cm]
    &P_4^{\, d}(t) = \frac{1}{(d-1) (d+1)}[(d+2)(d+4) t^4 - 6(d+2)t^2 + 3], \\[0.25cm]
    &P_5^{\, d}(t) = \frac{1}{(d-1) (d+1)}[(d+4)(d+6) t^5 - 10(d+4)t^3 + 15t], \\[0.25cm]
    &P_6^{\, d}(t) = \frac{1}{(d-1) (d+1) (d+3)}[(d+4)(d+6)(d+8) t^6 - 15(d+4)(d+6)t^4 + 45(d+4)t^2 - 15].
\end{align*}
\endgroup
In a reverse conclusion, we can write any $m$th power, $m \in \N_0$, of a number $t \in [-1,1]$ as a linear combination of Legendre polynomials
\begin{equation} \label{potenzen_legendre}
    t^m = \sum_{j=0}^m c_{j, \, d}(m) P_j^{\, d}(t), \quad t \in [-1,1],
\end{equation}
with coefficients $c_{j, \, d}(m)$, which only depend on $j$, $d$ and $m$. An elaborate calculation discloses the explicit form of these coefficients.
\begin{prop} \label{c_j,d(m) allgemein}
Let $k,l \in \N_0$ with $k \geq 2l$ and $[l] = {1,\ldots,l}$. Then, with the coefficients $a_{2l,k}$ from Proposition \ref{legendre_explizit}, we have
\begin{align*} \label{eigenvalues}
    c_{k-2l, \, d}(k) =
    \begin{cases}
        \begin{alignedat}{2}
        & \hspace{4cm} \frac{1}{a_{0,k}}          &,\quad& \textrm{for} \; l = 0, \\[0.25cm]
        &\sum_{r=1}^l \sum_{\substack{ (l_1,\ldots,l_r) \in [l]^r \\ l_1 + \cdots + l_r = l}} (-1)^r \frac{a_{2l_1,k} a_{2l_2,k-2l_1} \cdots a_{2l_r,k-2(l_1+\cdots+l_{r-1})}}{a_{0,k} a_{0,k-2l_1} \cdots a_{0,k-2l}}          &,\quad& \textrm{for} \; l > 0,
        \end{alignedat}
    \end{cases}
    \end{align*}
where $l_0 = 0$. These are the only nonzero coefficients in \eqref{potenzen_legendre} for given $k \in \N_0$. By the proof of Proposition \ref{K_beta_properties} i), we have in particular, $c_{0,\, d}(\beta) = \psi_d(\beta)$ for all $\beta \in \N$.
\end{prop}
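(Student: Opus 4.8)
The plan is to read \eqref{potenzen_legendre} as the inversion of the triangular change of basis between the monomials $\{1,t,\ldots,t^m\}$ and the Legendre polynomials $\{P_0^{\,d},\ldots,P_m^{\,d}\}$. Since $\deg P_j^{\,d}=j$ by Theorem \ref{def_legendre}, the family $\{P_0^{\,d},\ldots,P_m^{\,d}\}$ is a basis of the space of polynomials of degree at most $m$, so the coefficients $c_{j,\,d}(m)$ in \eqref{potenzen_legendre} exist and are unique; this already justifies speaking of \emph{the} coefficients. Moreover, Theorem \ref{def_legendre} tells us $P_j^{\,d}$ is even for even $j$ and odd for odd $j$, while $t^m$ has the parity of $m$; comparing parities forces $c_{j,\,d}(m)=0$ whenever $j\not\equiv m \pmod 2$, which is precisely the assertion that the only possibly nonzero coefficients are those indexed by $k-2l$, $l=0,\ldots,\lfloor k/2\rfloor$.

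First I would isolate the leading term in Proposition \ref{legendre_explizit}. Writing $P_k^{\,d}(t)=\sum_{l=0}^{\lfloor k/2\rfloor} a_{2l,k}\,t^{k-2l}$, the coefficient $a_{0,k}$ is the (nonzero) leading coefficient of $P_k^{\,d}$, so solving for the top power gives the recursion
\begin{equation*}
t^k=\frac{1}{a_{0,k}}\,P_k^{\,d}(t)-\sum_{s=1}^{\lfloor k/2\rfloor}\frac{a_{2s,k}}{a_{0,k}}\,t^{k-2s},
\end{equation*}
which expresses $t^k$ through $P_k^{\,d}$ and strictly lower powers of the same parity. Extracting the coefficient of $P_{k-2l}^{\,d}$ on both sides, and using that each $t^{k-2s}$ contributes only to $P_j^{\,d}$ with $j\le k-2s$, yields for $l\ge 1$
\begin{equation*}
c_{k-2l,\,d}(k)=-\sum_{s=1}^{l}\frac{a_{2s,k}}{a_{0,k}}\,c_{k-2l,\,d}(k-2s),
\end{equation*}
together with the base case $c_{k,\,d}(k)=1/a_{0,k}$, i.e.\ the $l=0$ formula.

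Next I would run an induction on $l$. Feeding the inductive formula for $c_{k-2l,\,d}(k-2s)$ — which is the claimed expression with $k$ replaced by $k-2s$ and $l$ by $l-s$ — into the recursion and setting $l_1=s$ prepends $s$ to each composition of $l-s$, producing a sum over all compositions $(l_1,\ldots,l_r)$ of $l$ into positive parts. The prefactor $-a_{2s,k}/a_{0,k}$ supplies the extra sign, the new numerator factor $a_{2l_1,k}$ and the denominator factor $a_{0,k}$, while the shifted arguments of the inductive terms align with the arguments $k-2(l_1+\cdots+l_{i-1})$ demanded by the formula; the base case $l-s=0$ seeds the $r=1$ term $-a_{2l,k}/(a_{0,k}a_{0,k-2l})$. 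The main obstacle is purely this bookkeeping: one must verify that the telescoping denominators $a_{0,k-2(l_1+\cdots+l_i)}$ and the numerator arguments line up exactly, and that every composition of $l$ is produced exactly once (distinct first parts $s=l_1$ preclude double counting, and $r=r'+1$ sweeps out $1,\ldots,l$).

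Finally, for the supplementary identity $c_{0,\,d}(\beta)=\psi_d(\beta)$ I would not use the composition formula but orthogonality. Since $P_0^{\,d}\equiv 1$, pairing \eqref{potenzen_legendre} with $P_0^{\,d}$ in the weighted scalar product \eqref{legendre_scalarproduct} and invoking Proposition \ref{legendre_orthogonal} gives $c_{0,\,d}(\beta)=\langle t^\beta,P_0^{\,d}\rangle/\langle P_0^{\,d},P_0^{\,d}\rangle$, that is, $\int_{-1}^1 t^\beta(1-t^2)^{(d-3)/2}\,\mathrm{d}t$ normalised by the total mass $\int_{-1}^1(1-t^2)^{(d-3)/2}\,\mathrm{d}t$. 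Since the projection $\skalarprodukt{b}{U}$ of $U\sim\unifsphere$ onto a fixed direction has exactly the density $\tfrac{|\Sp[d-2]|}{|\Sp|}(1-t^2)^{(d-3)/2}$ on $[-1,1]$, this ratio equals $\E(\skalarprodukt{b}{U})^{\,\beta}=\psi_d(\beta)$, matching the value recorded in the proof of Proposition \ref{K_beta_properties} i); both sides vanish for odd $\beta$, so the claim holds in either parity.
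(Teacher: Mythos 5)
Your proof is correct, and at its core it follows the same strategy as the paper: both solve Proposition \ref{legendre_explizit} for the leading power to get
\begin{equation*}
t^k=\frac{1}{a_{0,k}}\,P_k^{\,d}(t)-\sum_{s=1}^{\lfloor k/2\rfloor}\frac{a_{2s,k}}{a_{0,k}}\,t^{k-2s},
\end{equation*}
and both then induct on $l$ with composition bookkeeping. The difference lies in how the recursion is organized. You expand each lower monomial $t^{k-2s}$ in the Legendre basis and read off the coefficient of $P_{k-2l}^{\,d}$, which gives
\begin{equation*}
c_{k-2l,\,d}(k)=-\sum_{s=1}^{l}\frac{a_{2s,k}}{a_{0,k}}\,c_{k-2l,\,d}(k-2s),
\end{equation*}
a recursion across \emph{different} monomials in which each composition of $l$ is built by prepending its first part $l_1=s$; accordingly your induction hypothesis must be (and is) quantified over all admissible $k$. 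The paper instead re-inserts the displayed identity iteratively so as to work only with coefficients of the fixed monomial $t^k$, arriving at the dual recursion
\begin{equation*}
c_{k-2l,\,d}(k)=-\left(\frac{a_{2l,k}}{a_{0,k}\,a_{0,k-2l}}+\sum_{i=1}^{l-1}c_{k-2i,\,d}(k)\,\frac{a_{2(l-i),k-2i}}{a_{0,k-2l}}\right),
\end{equation*}
in which compositions are built by appending the last part $l-i$. The two recursions are mirror images, each sweeping out every composition of $l$ exactly once; yours is obtained in a single substitution and its bookkeeping (distinct first parts preclude double counting) is arguably the cleaner of the two. You also add two things the paper leaves implicit: an explicit uniqueness-and-parity argument showing that only the coefficients $c_{k-2l,\,d}(k)$ can be nonzero, and a direct proof of $c_{0,\,d}(\beta)=\psi_d(\beta)$ via Proposition \ref{legendre_orthogonal} and the density of the one-dimensional projection of $\unifsphere$ — the paper obtains the same identity inside the proof of Proposition \ref{K_beta_properties} i) through the Funk--Hecke theorem, and the two computations are equivalent.
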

\begin{proof}
The claim is obvious for $k=0$, hence let $k \in \N$. If $l = 0$, the second representation in Proposition \ref{legendre_explizit} yields
\begin{equation} \label{monom_darstellung}
    t^k = \frac{1}{a_{0,k}} P_k^{\, d}(t) - \frac{a_{2,k}}{a_{0,k}} t^{k-2} - \cdots -  \frac{a_{2g(k),k}}{a_{0,k}},
\end{equation}
where $g(k) = \lfloor k/2 \rfloor$. It follows that $c_{k,\, d}(k) = 1/a_{0,k}$, since monomials of lower degree do not contain Legendre polynomials of order $k$ by  \eqref{potenzen_legendre}. Let the claim hold true for all $j \in \{0,1,...,l-1\}$ for some $l \in \N$ with $k \geq 2l$. Next, we will prove the claim for this $l$ and conclude the statement by means of the principle of strong induction. The dots $\cdots$ are occasionally used for the sake of readability.

We repeatedly insert equation \eqref{monom_darstellung} for lower powers into \eqref{monom_darstellung}. The induction hypothesis gives
\begin{align*}
        t^k
        = \hspace{0.125cm} &c_{k,\, d}(k) P_k^{\, d}(t) - \frac{a_{2,k}}{a_{0,k}} \left[ \frac{1}{a_{0,k-2}} P_{k-2}^{\, d}(t) - \frac{a_{2,k-2}}{a_{0,k-2}} t^{k-4} - \cdots - \frac{a_{2(l-1),k-2}}{a_{0,k-2}} t^{k-2l} - \cdots - \frac{a_{2g(k-2),k-2}}{a_{0,k-2}} \right] \\[0.25cm]
        &- \frac{a_{4,k}}{a_{0,k}} t^{k-4} - \cdots - \frac{a_{2l,k}}{a_{0,k}} t^{k-2l} - \cdots - \frac{a_{2g(k),k}}{a_{0,k}} \\
        = \hspace{0.125cm} &c_{k,\, d}(k) P_k^{\, d}(t) + c_{k-2,\, d}(k) P_{k-2}^{\, d}(t) - \left( \frac{a_{4,k}}{a_{0,k}} + c_{k-2,\, d}(k) a_{2,k-2} \right) t^{k-4} \\[0.25cm] &- \cdots - \left( \frac{a_{2l,k}}{a_{0,k}} + c_{k-2,\, d}(k) a_{2(l-1),k-2} \right) t^{k-2l} - \cdots,
\end{align*}
since
\begin{equation*}
    c_{k-2,\, d}(k)
    = \sum_{r=1}^1 \sum_{\substack{ (l_1,\ldots,l_r) \in [1]^r \\ l_1 + \cdots + l_r = 1}} (-1)^r \frac{a_{2l_1,k} a_{2l_2,k-2l_1} \cdots a_{2l_r,k-2(l_1+\cdots+l_{r-1})}}{a_{0,k} a_{0,k-2l_1} \cdots a_{0,k-2l}} = - \frac{a_{2,k}}{a_{0,k} a_{0,k-2}}.
\end{equation*}
Furthermore, we have
\begin{align*}
    c_{k-4,\, d}(k)
    &= \sum_{r=1}^2 \sum_{\substack{ (l_1,\ldots,l_r) \in [2]^r \\ l_1 + \cdots + l_r = 2}} (-1)^r \frac{a_{2l_1,k} a_{2l_2,k-2l_1} \cdots a_{2l_r,k-2(l_1+\cdots+l_{r-1})}}{a_{0,k} a_{0,k-2l_1} \cdots a_{0,k-2l}} = - \frac{a_{4,k}}{a_{0,k} a_{0,k-4}} +  \frac{a_{2,k} a_{2,k-2}}{a_{0,k} a_{0,k-2} a_{0,k-4}} \\
    &= - \left( \frac{a_{4,k}}{a_{0,k} } + c_{k-2,d}(k) a_{2,k-2} \right) \frac{1}{a_{0,k-4}},
\end{align*}
so that
\begin{align*}
    t^k
    = \hspace{0.125cm} &c_{k,\, d}(k) P_k^{\, d}(t) + c_{k-2,\, d}(k) P_{k-2}^{\, d}(t) + c_{k-4,\, d}(k) P_{k-4}^{\, d}(t) \\[0.25cm]
    &- \cdots - \left( \frac{a_{2l,k}}{a_{0,k}} + c_{k-2,\, d}(k) a_{2(l-1),k-2} + c_{k-4,\, d}(k) a_{2(l-2),k-4} \right) t^{k-2l} - \cdots
\end{align*}
If we continue this procedure iteratively, then we arrive at
\begin{align*}
    t^k
    = c_{k,\, d}(k) P_k^{\, d}(t) + \cdots - \left( \frac{a_{2l,k}}{a_{0,k} a_{0,k-2l}} + \sum_{i=1}^{l-1} c_{k-2i,\, d}(k) \frac{a_{2(l-i),k-2i}}{a_{0,k-2l}} \right) P_{k-2l}^{\, d}(t) - \cdots.
\end{align*}
For the following, we abbreviate the quotient from the Proposition as
\begin{equation*}
    q_l(l_1,\ldots,l_r) = \frac{a_{2l_1,k} a_{2l_2,k-2l_1} \cdots a_{2l_r,k-2(l_1+\cdots+l_{r-1})}}{a_{0,k} a_{0,k-2l_1} \cdots a_{0,k-2l}}.
\end{equation*}
Once again, with the induction hypothesis it follows for each $i \in \N$ with $i \leq l-1$
\begin{align*}
    c_{k-2i,\, d}(k) \frac{a_{2(l-i),k-2i}}{a_{0,k-2l}}
    &= \sum_{s=1}^i \sum_{\substack{ (i_1,\ldots,i_s) \in [i]^s \\ i_1 + \cdots + i_s = i}} (-1)^s q_i(i_1,\ldots,i_s) \frac{a_{2(l-i),k-2i}}{a_{0,k-2l}} \\[0.25cm]
    &= \sum_{s=1}^i \sum_{\substack{ (i_1,\ldots,i_s) \in [i]^s \times \{ l-i \} \\ i_1 + \cdots + i_{s+1} = l}} (-1)^s q_l(i_1,\ldots,i_{s+1})
\end{align*}
and, thus, finally
\begin{align*}
    c_{k-2l,\, d}(k)
    &= - \left( \frac{a_{2l,k}}{a_{0,k} a_{0,k-2l}} + \sum_{i=1}^{l-1} c_{k-2i,d}(k) \frac{a_{2(l-i),k-2i}}{a_{0,k-2l}} \right) \\[0.25cm]
    &= - \left( \frac{a_{2l,k}}{a_{0,k} a_{0,k-2l}} + \sum_{i=1}^{l-1} \sum_{s=1}^i \sum_{\substack{ (i_1,\ldots,i_s) \in [i]^s \times \{ l-i \} \\ i_1 + \cdots + i_{s+1} = l}} (-1)^s q_l(i_1,\ldots,i_{s+1}) \right) \\[0.25cm]
    &= \sum_{r=1}^l \sum_{\substack{ (l_1,\ldots,l_r) \in [l]^r \\ l_1 + \cdots + l_r = l}} (-1)^r q_l(l_1,\ldots,l_r),
\end{align*}
since the first summand satisfies
\begin{equation*}
    - \frac{a_{2l,k}}{a_{0,k} a_{0,k-2l}} = \sum_{\substack{ (l_1) \in [l]^1 \\ l_1 = l}} (-1)^1 q_l(l_1).
\end{equation*}
 In the second summand we sum over all tuples $(i_1,\ldots,i_{s+1}) \in [i]^s \times \{ l-i \}$ satisfying $i_1 + \cdots + i_{s+1} = l$ for $s = 1,\ldots,i; i = 1,\ldots,l-1$. This coincides exactly with the sum over all tuples $(l_1,\ldots,l_r) \in [l]^r$ satisfying $l_1 + \cdots + l_r = l$ for $r = 2,\ldots,l$.
\end{proof}
For the first couple of powers, we have
\begingroup
\allowdisplaybreaks
\begin{align*}
    &t^0 = P_0^{\, d}(t), \quad t = P_1^{\, d}(t), \\[0.25cm]
    &t^2 = \frac{d-1}{d} P_2^{\, d}(t) + \frac{1}{d}, \\[0.25cm]
    &t^3 = \frac{d-1}{d+2} P_3^{\, d}(t) + \frac{3}{d+2} P_1^{\, d}(t), \\[0.25cm]
    &t^4 = \frac{(d-1) (d+1)}{(d+2) (d+4)}P_4^{\, d}(t) + \frac{6(d-1)}{d (d+4)} P_2^{\, d}(t) + \frac{3}{d (d+2)}, \\[0.25cm]
    &t^5 = \frac{(d-1)(d+1)}{(d+4)(d+6)} P_5^{\, d}(t) + \frac{10(d-1)}{(d+2) (d+6)} P_3^{\, d}(t) + \frac{15}{(d+2) (d+4)} P_1^{\, d}(t), \\[0.25cm]
    &t^6 = \frac{(d-1)(d+1)(d+3)}{(d+4) (d+6) (d+8)} P_6^{\, d}(t) + \frac{15(d-1)(d+1)}{(d+2) (d+4) (d+8)} P_4^{\, d}(t) \\[0.25cm]
    &\qquad+ \frac{45(d-1)}{d (d+4) (d+6)} P_2^{\, d}(t) + \frac{15}{d (d+2)(d+4)}. \\
\end{align*}
\endgroup

\section{Technical Lemmas and Proofs} \label{sec:AppB}

\begin{lemma} \label{lem:mean}
If $U \sim \unifsphere$, then
\begin{align*}
    \begin{aligned}
        \eta_\beta(b,c)
        &= \E (\skalarprodukt{b}{U})^{\, \beta} (\skalarprodukt{c}{U})^{\, \beta}
        = \sum_{j=0}^\beta \frac{(c_{j,\, d}(\beta))^2}{\nu_d(j)} P_j^{\, d}(\skalarprodukt{b}{c}), \quad b,c \in \Sp, \beta \in \N.
    \end{aligned}
\end{align*}
\end{lemma}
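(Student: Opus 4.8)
The plan is to expand each power as a finite Legendre series via \eqref{potenzen_legendre}, interchange the expectation with the resulting finite sums, and then evaluate the arising spherical integrals by the Funk--Hecke theorem, using the orthogonality of Legendre polynomials to reduce a double sum to its diagonal.

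First I would apply \eqref{potenzen_legendre} to both factors, writing $(\skalarprodukt{b}{U})^\beta = \sum_{j=0}^\beta c_{j,\, d}(\beta) P_j^{\, d}(\skalarprodukt{b}{U})$ and, analogously, $(\skalarprodukt{c}{U})^\beta = \sum_{k=0}^\beta c_{k,\, d}(\beta) P_k^{\, d}(\skalarprodukt{c}{U})$. Since $U \sim \unifsphere$, the expectation is $|\Sp|^{-1}$ times an integral against $\sigma$, so by linearity
\begin{equation*}
\eta_\beta(b,c) = \frac{1}{|\Sp|} \sum_{j=0}^\beta \sum_{k=0}^\beta c_{j,\, d}(\beta) c_{k,\, d}(\beta) \int_{\Sp} P_j^{\, d}(\skalarprodukt{b}{x}) P_k^{\, d}(\skalarprodukt{c}{x}) \, \text{d}\sigma(x).
\end{equation*}
Both sums are finite, so no convergence questions arise and the interchange is immediate.

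Next I would evaluate the inner integral. By Theorem \ref{def_legendre}, the map $x \mapsto P_k^{\, d}(\skalarprodukt{c}{x})$ is a spherical harmonic of order $k$, which lets me invoke the Funk--Hecke-Theorem \ref{funk-hecke} with $\Lambda = P_j^{\, d}$ and $u = b$ to obtain
\begin{equation*}
\int_{\Sp} P_j^{\, d}(\skalarprodukt{b}{x}) P_k^{\, d}(\skalarprodukt{c}{x}) \, \text{d}\sigma(x) = |\Sp[d-2]| \, \langle P_j^{\, d}, P_k^{\, d} \rangle \, P_k^{\, d}(\skalarprodukt{b}{c}),
\end{equation*}
where $\langle \cdot, \cdot \rangle$ is the weighted scalar product from \eqref{legendre_scalarproduct}. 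Now Proposition \ref{legendre_orthogonal} gives $\langle P_j^{\, d}, P_k^{\, d} \rangle = \delta_{jk} \, |\Sp| / (\nu_d(j) \, |\Sp[d-2]|)$, which simultaneously annihilates every off-diagonal term and, on the diagonal, cancels the two factors $|\Sp[d-2]|$ to leave $\frac{|\Sp|}{\nu_d(j)} P_j^{\, d}(\skalarprodukt{b}{c})$. Substituting back and cancelling the prefactor $|\Sp|^{-1}$ yields precisely $\sum_{j=0}^\beta (c_{j,\, d}(\beta))^2 \, \nu_d(j)^{-1} \, P_j^{\, d}(\skalarprodukt{b}{c})$.

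The argument is short once the two structural inputs are identified. The only point needing care is recognizing $x \mapsto P_k^{\, d}(\skalarprodukt{c}{x})$ as a spherical harmonic so that Funk--Hecke applies, together with bookkeeping of the surface-area constants $|\Sp|$ and $|\Sp[d-2]|$ so that they cancel correctly; I do not anticipate any genuine obstacle beyond this.
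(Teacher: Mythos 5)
Your proof is correct and uses essentially the same approach as the paper: expansion of the powers via \eqref{potenzen_legendre}, the Funk--Hecke theorem applied to the spherical harmonic $x \mapsto P_k^{\, d}(\skalarprodukt{c}{x})$, and the orthogonality relation of Proposition \ref{legendre_orthogonal} to collapse the double sum to its diagonal. The only cosmetic difference is ordering: the paper expands just one factor, applies Funk--Hecke with $\Lambda(t)=t^{\beta}$, and then expands $t^{\beta}$ inside the resulting one-dimensional integral, whereas you expand both factors up front before invoking Funk--Hecke — the same tools with the same bookkeeping, arriving at the identical conclusion.
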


\begin{proof}
It follows by the Funk--Hecke-Theorem, Theorem \ref{funk-hecke}, applied to the spherical harmonic $P_j^{\, d}(\skalarprodukt{c}{\cdot}) \colon \Sp \to \R$, $c \in \Sp$, and the representation of a monomial via \eqref{potenzen_legendre}
\begin{align*}
    \eta_\beta(b,c)
    &= \E (\skalarprodukt{b}{U})^{\, \beta} (\skalarprodukt{c}{U})^{\, \beta}
    = \frac{1}{|\Sp|} \int_{\Sp} (\skalarprodukt{b}{\omega})^{\, \beta} (\skalarprodukt{c}{\omega})^{\, \beta} \, \text{d}\sigma(\omega) \\[0.25cm]
    &= \frac{1}{|\Sp|} \int_{\Sp} (\skalarprodukt{b}{\omega})^{\, \beta} \sum_{j=0}^\beta c_{j,\, d}(\beta) \, P_j^{\, d}(\skalarprodukt{\omega}{c}) \, \text{d}\sigma(\omega) \\[0.25cm]
    &= \frac{|\Sp[d-2]|}{|\Sp|} \sum_{j=0}^\beta c_{j,\, d}(\beta) \, P_j^{\, d}(\skalarprodukt{b}{c}) \int_{-1}^1 t^{\, \beta} P_j^{\, d}(t) (1-t^2)^{(d-3)/2} \, \text{d}t \\[0.25cm]
    &= \frac{|\Sp[d-2]|}{|\Sp|} \sum_{j=0}^\beta \sum_{l=0}^\beta c_{j,\, d}(\beta) \, c_{l,\, d}(\beta) \, P_j^{\, d}(\skalarprodukt{b}{c}) \int_{-1}^1 P_l^{\, d}(t) P_j^{\, d}(t) (1-t^2)^{(d-3)/2} \, \text{d}t.
\end{align*}
Due to the orthogonality properties of the $d$-dimensional Legendre polynomials, see Proposition \ref{legendre_orthogonal}, we can conclude
\begin{align*}
    \eta_\beta(b,c)
    &= \frac{|\Sp[d-2]|}{|\Sp|} \sum_{j=0}^\beta \sum_{l=0}^\beta c_{j,\, d}(\beta) \, c_{l,\, d}(\beta) \, P_j^{\, d}(\skalarprodukt{b}{c}) \, \left\langle P_l^{\, d}, P_j^{\, d} \right\rangle \\[0.25cm]
    &= \sum_{j=0}^\beta \sum_{l=0}^\beta c_{j,\, d}(\beta) \, c_{l,\, d}(\beta) \, P_j^{\, d}(\skalarprodukt{b}{c}) \, \frac{\delta_{jl}}{\nu_d(j)}
    = \sum_{j=0}^\beta \frac{(c_{j,\, d}(\beta))^2}{\nu_d(j)} P_j^{\, d}(\skalarprodukt{b}{c}).
\end{align*}
\end{proof}

\begin{lemma}\label{lem:DQVK}
Under $(\mathbb{P}^{(n)})_{n\in\N}$ we have for $n\rightarrow\infty$
\begin{eqnarray*}
\log L_n \cd \mbox{N}\left(-\frac{\tau^2}{2},\tau^2\right),
\end{eqnarray*}
where $\tau^2=\int_{\Sp} h^2(x)\sigma(x)\;\text{d}x<\infty$.
\end{lemma}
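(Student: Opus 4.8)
The plan is to exploit the product structure of the likelihood ratio and reduce the statement to a central limit theorem plus a law of large numbers for sums of bounded iid summands. Since
$$L_n = \frac{d\Al^{(n)}}{d\Pb^{(n)}} = \prod_{j=1}^n \frac{f_n(U_j)}{\mu(U_j)} = \prod_{j=1}^n\left(1 + \frac{h(U_j)}{\sqrt{n}}\right),$$
where under $\Pb^{(n)}$ the vectors $U_1,\ldots,U_n$ are iid with $U_1\sim\unifsphere$, the natural first step is to pass to the logarithm,
$$\log L_n = \sum_{j=1}^n \log\left(1 + \frac{h(U_j)}{\sqrt{n}}\right),$$
and to Taylor-expand each summand. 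Writing $M=\|h\|_\infty<\infty$, for every $n$ with $M/\sqrt n<1/2$ one has $|h(U_j)/\sqrt n|<1/2$ simultaneously for all $j$, so the expansion $\log(1+x)=x-x^2/2+r(x)$ with $|r(x)|\le C|x|^3$ applies uniformly and yields
$$\log L_n = \frac{1}{\sqrt n}\sum_{j=1}^n h(U_j) \;-\; \frac{1}{2n}\sum_{j=1}^n h^2(U_j) \;+\; R_n, \qquad R_n=\sum_{j=1}^n r\!\left(\frac{h(U_j)}{\sqrt n}\right).$$

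I would then treat the three pieces separately. The leading term is a normalized sum of iid, bounded, centred variables: boundedness of $h$ gives finite variance $\tau^2=\E h^2(U_1)<\infty$, and the standing assumption $\int_{\Sp}h\,\mu\,d\sigma=0$ gives $\E h(U_1)=0$, so the Lindeberg--L\'evy CLT yields $n^{-1/2}\sum_{j=1}^n h(U_j)\cd \mathrm{N}(0,\tau^2)$. The quadratic term is handled by the strong law of large numbers, $\tfrac{1}{2n}\sum_{j=1}^n h^2(U_j)\cas \tfrac12\E h^2(U_1)=\tau^2/2$, hence it converges to the constant $\tau^2/2$ in probability. For the remainder, boundedness gives the purely deterministic bound $|R_n|\le C\sum_{j=1}^n|h(U_j)/\sqrt n|^3\le C\,n\,(M/\sqrt n)^3=C M^3 n^{-1/2}\to 0$, so $R_n\to0$ surely. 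Slutsky's theorem then combines these into $\log L_n\cd \mathrm{N}(-\tau^2/2,\tau^2)$, as claimed.

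The only genuinely delicate point is the simultaneous control of the Taylor remainders across all $n$ summands, and this is exactly where the boundedness of $h$ does the work: it guarantees that the expansion is valid for all $j$ at once as soon as $n$ is large, and it reduces $R_n$ to a non-random $O(n^{-1/2})$ quantity rather than something requiring a probabilistic estimate. Under a weaker hypothesis such as mere square-integrability of $h$ one would instead need a Lindeberg truncation together with a more careful in-probability bound on $R_n$; the present boundedness assumption makes all of this unnecessary. I would finally note that the limiting law $\mathrm{N}(-\tau^2/2,\tau^2)$---whose mean is minus half its variance---reflects the LAN structure and, via Le Cam's first lemma, the mutual contiguity of $(\Al^{(n)})$ and $(\Pb^{(n)})$ underlying Theorem \ref{thm:ben_alt}.
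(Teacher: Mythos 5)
Your proposal is correct and follows essentially the same route as the paper's proof: Taylor expansion of $\log(1+h(U_{nj})/\sqrt{n})$, a CLT for the linear term, a law-of-large-numbers argument for the quadratic term, and a remainder bound that uses the boundedness of $h$ (the paper controls the remainder via the Lagrange form and bounds the quadratic term by a mean/variance computation, then invokes the Lindeberg--Feller CLT, which is the triangular-array version appropriate to the rowwise setup, but this is only a cosmetic difference since under $\mathbb{P}^{(n)}$ each row is iid uniform). The only point worth flagging is that your appeal to the \emph{strong} law across rows of a triangular array is, strictly speaking, an abuse; it is harmless here because the law of $\frac{1}{n}\sum_{j=1}^n h^2(U_{nj})$ under $\mathbb{P}^{(n)}$ coincides with that of an iid sequence, so convergence in probability transfers.
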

\begin{proof}
Using a Taylor expansion of the logarithm around $x_0=1$ we have
\begin{eqnarray*}\label{DQA1}
\log L_n(U_{n1},\ldots,U_{nn})&=&\sum_{j=1}^n\log\left(1+\frac{h(U_{nj})}{\sqrt{n}}\right)\\
&=&\sum_{j=1}^n\left\{\frac{h(U_{nj})}{\sqrt{n}}-\frac{h^2(U_{nj})}{2n}+R_{n,j}\right\},
\end{eqnarray*}
with
\begin{equation}\label{Restgliedab}
R_{n,j}=\frac{1}{3!}\frac{2}{\left(1+\eta_{n,j}\right)^3}\frac{h^3(U_{n,j})}{n^{\frac{3}{2}}},
\end{equation}
where $|\eta_{n,j}|\le\frac{\left|h\left(U_{n,j}\right)\right|}{\sqrt{n}}$. Since $h$ is bounded, we have $\displaystyle{\sum_{j=1}^nR_{n,j}=o_{\mathbb{P}^{(n)}}(1)}$, and
\begin{equation*}
E\left(\frac{1}{n}\sum_{j=1}^nh^2\left(U_{n,j}\right)\right)\longrightarrow \tau^2\;\mbox{as well as}\;V\left(\frac{1}{n}\sum_{j=1}^nh^2\left(U_{n,j}\right)\right)\longrightarrow 0
\end{equation*}
for $n\rightarrow\infty$. The claim is a consequence of the Lindeberg--Feller CLT.
\end{proof}

The next Proposition deals with the technical calculations of Examples \ref{bsp_bahadur_vMF}, \ref{bsp_bahadur_watson} and \ref{bsp_bahadur_LP}. Before we provide the proof, we present some special functions and their properties, compare with \cite{MJ:2000}, Appendix 1.
\begin{enumerate}[label={}]
    \item Modified Bessel function of the first kind and order $p \geq 0$:
    \begin{equation} \label{bessel_def}
        I_p(\kappa) = \frac{(\kappa/2)^p}{\Gamma(p+1/2) \Gamma(1/2)} \int_{-1}^1 e^{\kappa t} (1-t^2)^{p-1/2} \, \text{d}t, \quad \kappa > 0.
    \end{equation}
    \item The function $I_p$ satisfies
    \begin{equation} \label{bessel_reihe}
        I_p(\kappa) = \sum_{r=0}^\infty \frac{1}{\Gamma(p+r+1) \Gamma(r+1)} \left( \frac{\kappa}{2} \right)^{2r+p}, \quad p \geq 0, \; \kappa > 0.
    \end{equation}
    \item For $\kappa > 0$ and $p \geq 1$ we have
    \begin{equation} \label{bessel_ableitung}
        \kappa I_p'(\kappa) = p I_p(\kappa) + \kappa I_{p+1}(\kappa), \quad
        I_0'(\kappa) = I_1(\kappa).
    \end{equation}
    \item For $\kappa > 0$ let
    \begin{equation} \label{A_d}
        A_d(\kappa) = \frac{I_{d/2}(\kappa)}{I_{d/2-1}(\kappa)}.
    \end{equation}
    \item For sufficiently small $\kappa > 0$ we have
    \begin{equation} \label{A_d_kappa_klein}
        A_d(\kappa) = \frac{\kappa}{d} - \frac{\kappa^3}{d^2(d+2)} + O(\kappa^5).
    \end{equation}
    \item For $\kappa > 0$ we have
    \begin{equation} \label{A_d_ableitung}
        A_d'(\kappa) = 1 - A_d^2(\kappa) - \frac{d-1}{\kappa} A_d(\kappa).
    \end{equation}
    \item For $\kappa > 0$ let
    \begin{equation} \label{a_d}
        a_d(\kappa) = 2 \pi^{d/2} \left( \frac{\kappa}{2} \right)^{1-d/2} I_{d/2-1}(\kappa).
    \end{equation}
    \item With the series expansion in \eqref{bessel_reihe} we obtain
    \begin{equation} \label{a_d_grenzwert}
        a_d(\kappa)
        = 2 \pi^{d/2} \left( \frac{\kappa}{2} \right)^{1-d/2} \left[ \frac{1}{\Gamma(d/2) \Gamma(1)} \left( \frac{\kappa}{2} \right)^{d/2-1} + O(\kappa^{d/2+1}) \right] \overset{\kappa \to0^+}{\rightarrow} |\Sp|^+.
    \end{equation}
    \item An application of \eqref{bessel_ableitung} yields for $\kappa > 0$
    \begin{equation} \label{log_a_d_ableitung}
        \partial_\kappa [ \log a_d(\kappa) ] = \frac{a_d'(\kappa)}{a_d(\kappa)} = A_d(\kappa).
    \end{equation}
    \item Kummer function for non-negative $\kappa$:
    \begin{equation} \label{kummer_def}
        M(a,b,\kappa) = \frac{1}{B(a, b-a)} \int_{-1}^1 e^{\kappa t^2} t^{2a-1} (1-t^2)^{b-a-1} \, \text{d}t, \quad b>a>0, \; \kappa \geq 0.
    \end{equation}
    \item We have
    \begin{equation} \label{kummer_reihe}
        M(a,b,\kappa) = \sum_{r=0}^\infty \frac{\Gamma(a+r) \Gamma(b)}{\Gamma(a) \Gamma(b+r)} \frac{\kappa^r}{r!}, \quad b>a>0, \; \kappa \geq 0.
    \end{equation}
    \item For $\kappa \geq 0$ and $b > a > 0$ we have
    \begin{equation} \label{kummer_ableitung}
        M'(a,b,\kappa) = \frac{a}{b} M(a+1,b+1,\kappa).
    \end{equation}
    \item With the series expansion in \eqref{kummer_reihe} we obtain for $b > a > 0$
    \begin{equation} \label{kummer_grenzwert}
        M(a,b,\kappa) = 1 + O(\kappa) \overset{\kappa \to0^+}{\rightarrow} 1^+.
    \end{equation}
    \item For $\kappa \geq 0$ let
    \begin{equation} \label{D_d}
        D_d(\kappa) = \frac{M(3/2,d/2+1,\kappa)}{d M(1/2,d/2,\kappa)}.
    \end{equation}
    \item For sufficiently small $\kappa \geq 0$ we have
    \begin{equation} \label{D_d_kappa_klein}
        D_d(\kappa) = \frac{1}{d} + \frac{2(d-1)}{d^2(d+2)} \kappa + O(\kappa^2).
    \end{equation}
    \item For $\kappa > 0$ we have
    \begin{equation} \label{D_d_ableitung}
        D_d'(\kappa) = \frac{\frac{3}{d+2} M(5/2,d/2+2,\kappa) M(1/2,d/2,\kappa) - \frac{1}{d} M(3/2,d/2+1,\kappa)}{d M^2(1/2,d/2,\kappa)}.
    \end{equation}
    \item With \eqref{kummer_grenzwert} we obtain
    \begin{equation} \label{D_d_ableitung_grenzwert}
        \limkappa D_d'(\kappa) = \frac{2(d-1)}{d^2(d+2)}
    \end{equation}
    \item For $\kappa \geq 0$ let
    \begin{equation} \label{d_d}
        d_d(\kappa) = \frac{2 \pi^{d/2}}{\Gamma(d/2)} M(1/2,d/2,\kappa).
    \end{equation}
    \item Due to the series expansion in \eqref{kummer_reihe} we have for $\kappa > 0$
    \begin{equation} \label{d_d_negativ}
        1 - M(1/2,d/2,\kappa)
        = - \sum_{r=1}^\infty \frac{\Gamma(1/2+r) \Gamma(d/2)}{\Gamma(1/2) \Gamma(d/2+r)} \frac{\kappa^r}{r!} < 0.
    \end{equation}
    \item An application of \eqref{kummer_ableitung} immediately gives for $\kappa > 0$
    \begin{equation} \label{log_d_d_ableitung}
        \partial_\kappa [ \log d_d(\kappa) ] = \frac{d_d'(\kappa)}{d_d(\kappa)} = D_d(\kappa).
    \end{equation}
\end{enumerate}
\noindent
We come to the announced Proposition. The notation corresponds as far as possible to the one of Chapter \ref{sec:Bah}.
\begin{prop} \label{bahadur_ARE_berechnungen}
Let $\kappa >0$, and let $f(\cdot \, | \, \kappa)$ be a continuous density w.r.t the surface measure $\sigma$, which is parameterized via $\kappa$. The limit case $\kappa \to0^+$ is assumed to yield the uniform distribution on $\Sp$. Further, let $U$ be a random vector, which is distributed according to $f(\cdot \, | \, \kappa)$. Then
\begin{enumerate}[label=\roman*)]
    \item For the von Mises--Fisher alternative \textsc{vMF}$(\theta, \kappa)$, with $\theta \in \Sp$ fixed:
    \begin{enumerate}
        \item
        \begin{equation*}
            \textsc{KL}(\kappa, 0) = A_d(\kappa) \kappa - \log a_d(\kappa) +  \log |\Sp|,
        \end{equation*}
        \item
        \begin{equation*}
            \gamma_\kappa(b) = \frac{|\Sp[d-2]|}{a_d(\kappa)} \sum_{l=0}^\infty \frac{\kappa^l}{l!} \sum_{j=0}^\beta c_{j, \, d}(\beta) \Delta_j(l) P_j^d(\skalarprodukt{\theta}{b}) - \psi_d(\beta), \quad b \in \Sp,
        \end{equation*}
        \\
        \begin{equation*}
            \Delta_j(l) = \int_{-1}^1 P_j^d(t) t^l (1-t^2)^{(d-3)/2} \, \text{d}t, \quad j = 1,\ldots,\beta, \; l \in \N_0.
        \end{equation*}
        \item
        \begin{equation*}
            \lim_{\kappa \to0^+} \frac{\max_{b \in \Sp} \gamma_\kappa^2(b)}{2 \textsc{KL}(\kappa, 0)} = \lambda_1 \nu_d(1).
        \end{equation*}
    \end{enumerate}
    \item For the Watson alternative \textsc{W}$(\theta, \kappa)$, with $\theta \in \Sp$ fixed:
    \begin{enumerate}
        \item
        \begin{equation*}
            \textsc{KL}(\kappa, 0) = D_d(\kappa) \kappa - \log d_d(\kappa) +  \log |\Sp|,
        \end{equation*}
        \item
        \begin{equation*}
            \gamma_\kappa(b) = \frac{|\Sp[d-2]|}{d_d(\kappa)} \sum_{l=0}^\infty \frac{\kappa^l}{l!} \sum_{j=0}^\beta c_{j, \, d}(\beta) \Delta_j(2l) P_j^d(\skalarprodukt{\theta}{b}) - \psi_d(\beta), \quad b \in \Sp,
        \end{equation*}
        \item
        \begin{equation*}
            \lim_{\kappa \to0^+} \frac{\max_{b \in \Sp} \gamma_\kappa^2(b)}{2 \textsc{KL}(\kappa, 0)} = \lambda_2 \nu_d(2).
        \end{equation*}
    \end{enumerate}
    \item For the Legendre polynomial alternative \textsc{LP}$_m(\theta, \kappa)$, with $m \in \N$ and $\theta \in \Sp$ fixed and $\kappa \in [0,1]$:
    \begin{enumerate}
        \item
        \begin{equation*}
            \textsc{KL}(\kappa, 0) = \frac{\kappa^2}{\nu_d(m)} \left( 1 - \frac{1}{2 (1 + \xi_\kappa)^2} \right) -  \frac{\kappa^3}{2(1+\xi_\kappa)^2} \frac{|\Sp[d-2]|}{|\Sp|} \int_{-1}^1 (P_m^d(t))^3 (1-t^2)^{(d-3)/2} \, \text{d}t,
        \end{equation*}
        with an intermediate point $\xi_\kappa$ satisfying $|\xi_\kappa| \leq \kappa$.
        \item
        \begin{equation*}
            \gamma_\kappa(b) = \kappa^2 \frac{\lambda_m}{\nu_d(m)} P_m^d(\skalarprodukt{\theta}{b}), \quad b \in \Sp,
        \end{equation*}
        \item
        \begin{equation*}
            \lim_{\kappa \to0^+} \frac{\max_{b \in \Sp} \gamma_\kappa^2(b)}{2 \textsc{KL}(\kappa, 0)} = \lambda_m \nu_m(2).
        \end{equation*}
    \end{enumerate}
\end{enumerate}
\end{prop}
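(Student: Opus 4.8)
The plan is to treat the three parts with a single common computation and then specialise, since parts (i), (ii) and (iii) are structurally identical: each splits into (a) an evaluation of the Kullback--Leibler number, (b) a closed form for the shift $\gamma_\kappa$, and (c) a Taylor expansion producing the stated limit. The shared ingredients are the monomial expansion $t^\beta=\sum_{j=0}^\beta c_{j,\,d}(\beta)P_j^{\,d}(t)$ from \eqref{potenzen_legendre}, the Funk--Hecke Theorem \ref{funk-hecke}, the Legendre orthogonality of Proposition \ref{legendre_orthogonal}, and the two facts $c_{0,\,d}(\beta)=\psi_d(\beta)$ and $\lambda_j=(c_{j,\,d}(\beta)/\nu_d(j))^2$ from Propositions \ref{c_j,d(m) allgemein} and \ref{K_beta_properties}. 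I will also use repeatedly the identity $\Delta_j(l)=\langle P_j^{\,d},t^l\rangle=c_{j,\,d}(l)\,|\Sp|/(\nu_d(j)\,|\Sp[d-2]|)$, obtained by expanding $t^l$ in Legendre polynomials and invoking orthogonality.

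For the Kullback--Leibler numbers in each (a), I would write the log-density as a normalising constant plus the natural statistic. For vMF and Watson the densities are $a_d(\kappa)^{-1}\exp(\kappa\,x^\top\theta)$ and $d_d(\kappa)^{-1}\exp(\kappa(x^\top\theta)^2)$, so that $\log(f(U\,|\,\kappa)/f(U\,|\,0))=-\log a_d(\kappa)+\kappa\,U^\top\theta+\log|\Sp|$ and its Watson analogue. Taking $\E_\kappa$ and using $\E_\kappa[U^\top\theta]=A_d(\kappa)$ and $\E_\kappa[(U^\top\theta)^2]=D_d(\kappa)$ — each the logarithmic derivative of the normalising constant by \eqref{log_a_d_ableitung} and \eqref{log_d_d_ableitung} — produces the displayed formulas at once. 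For LP one has $\textsc{KL}(\kappa,0)=\E_\kappa[\log(1+\kappa P_m^{\,d}(\theta^\top U))]$; a second-order Taylor expansion of $\log(1+u)$ with Lagrange remainder at an intermediate point $\xi_\kappa$, integrated against $f(\cdot\,|\,\kappa)$, annihilates the linear term through the orthogonality of $P_m^{\,d}$ to the constant and leaves $\int(P_m^{\,d})^2=\nu_d(m)^{-1}$ together with the stated cubic remainder.

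For the shifts in (b) I would expand the exponential (resp. the affine factor for LP) as a power series, substitute $(b^\top x)^\beta=\sum_j c_{j,\,d}(\beta)P_j^{\,d}(b^\top x)$, and apply Funk--Hecke termwise with $u=\theta$ and $\Lambda(t)=t^l,\,t^{2l}$ or $P_m^{\,d}(t)$, reducing every integral to $P_j^{\,d}(\theta^\top b)\,\Delta_j(\cdot)$; for LP orthogonality collapses the sum to $j=m$, so $\gamma_\kappa(b)$ is exactly proportional to $\kappa\,P_m^{\,d}(\theta^\top b)$ with coefficient $c_{m,\,d}(\beta)/\nu_d(m)$. The crux is (c). I would first show that the $b$-independent part of $\gamma_\kappa$ vanishes identically: the $j=0$ contribution carries the factor $\sum_l(\kappa^l/l!)\psi_d(l)=a_d(\kappa)/|\Sp|$ (resp. $\sum_l(\kappa^l/l!)\psi_d(2l)=d_d(\kappa)/|\Sp|$), which cancels precisely against $\psi_d(\beta)$. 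Hence $\gamma_\kappa$ is a combination of harmonics of degree $\ge 1$, and its leading coefficient in $\kappa$ lands on $P_1^{\,d}$ (vMF), $P_2^{\,d}$ (Watson) or $P_m^{\,d}$ (LP). Since $|P_j^{\,d}|\le1$ with $P_j^{\,d}(1)=1$ by Remark \ref{legendre_schranke}, the maximum is taken at $b=\theta$, giving $\spheremax\gamma_\kappa^2(b)=\kappa^2(c_{m,\,d}(\beta)/\nu_d(m))^2+o(\kappa^2)$. Expanding $2\,\textsc{KL}(\kappa,0)$ to order $\kappa^2$ via \eqref{A_d_kappa_klein} and \eqref{D_d_kappa_klein} (directly from (a) for LP, with $\xi_\kappa\to0$) and forming the quotient, the algebra collapses to $\lambda_m\nu_d(m)$ after using $\nu_d(1)=d$ and $\nu_d(2)=(d+2)(d-1)/2$.

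I expect the main obstacle to be the Watson denominator. There the $O(\kappa)$ parts of $D_d(\kappa)\kappa$ and $-\log d_d(\kappa)$ must cancel so that $\textsc{KL}$ is genuinely of order $\kappa^2$, which forces me to carry $D_d$ to first order and to check that the surviving coefficient combines with the factor $\tfrac{d-1}{d}$ arising from $t^2=\tfrac{d-1}{d}P_2^{\,d}+\tfrac1d$ to reproduce exactly $\lambda_2\nu_d(2)$ after division by $\nu_d(2)$. The remaining work is bookkeeping: justifying the termwise use of Funk--Hecke under the series (uniform convergence on the compact sphere suffices) and controlling the $o(\kappa^2)$ remainders uniformly in $b$, so that the maximisation over $b$ and the limit $\kappa\to0^+$ may be interchanged.
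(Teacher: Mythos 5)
Your proposal is correct, and it reaches all three limits by the same basic toolkit as the paper (the expansion \eqref{potenzen_legendre}, the Funk--Hecke Theorem \ref{funk-hecke}, Legendre orthogonality from Proposition \ref{legendre_orthogonal}, and $c_{0,\,d}(\beta)=\psi_d(\beta)$), with part iii) essentially identical to the paper's treatment; but your organization of the two key steps is genuinely different. For the Kullback--Leibler numbers you use the exponential-family identity $\E_\kappa[T(U)]=\partial_\kappa \log Z(\kappa)$ together with \eqref{log_a_d_ableitung} and \eqref{log_d_d_ableitung}, whereas the paper verifies $\E_\kappa[\skalarprodukt{\theta}{U}]=A_d(\kappa)$ and its Watson analogue by hand (Funk--Hecke plus an integration by parts on the Bessel integral, resp.\ a Beta--Kummer identity); both are valid, yours is shorter, the paper's is self-contained at the level of special functions. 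More substantially, in step (c) you note that the entire $b$-independent ($j=0$) part of $\gamma_\kappa$ cancels \emph{identically} against $-\psi_d(\beta)$: your identities $\sum_l (\kappa^l/l!)\,\psi_d(l)=a_d(\kappa)/|\Sp|$ and $\sum_l (\kappa^l/l!)\,\psi_d(2l)=d_d(\kappa)/|\Sp|$ are correct (they merely restate that the vMF and Watson densities integrate to one; even more directly, the $j=0$ term of the Legendre expansion is the constant $c_{0,\,d}(\beta)=\psi_d(\beta)$, whose expectation under \emph{any} law is $\psi_d(\beta)$). This reduces (c) to a quotient of two explicit $\kappa^2$-order Taylor expansions, maximized at $b=\theta$ via Remark \ref{legendre_schranke}, with the remainders controlled uniformly in $b$ exactly as you indicate. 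The paper instead keeps the $b$-independent pieces and shows, through de L'Hospital computations applied to $\gamma_\kappa(b)/\sqrt{2\,\textsc{KL}(\kappa,0)}$ case by case ($l=0$, $l=1$, $l\geq 2$), that they are asymptotically negligible; in the Watson case this requires an exact cancellation between two nonvanishing limits (the $l=0$ piece $-\psi_d(\beta)\sqrt{(d+2)/(d-1)}/\sqrt{2}$ against the $\psi_d(\beta)/d$ portion of the $l=1$ piece), which is precisely the delicacy your route eliminates. What your route forgoes is only the paper's stronger intermediate statement that $\gamma_\kappa(\cdot)/\sqrt{2\,\textsc{KL}(\kappa,0)}$ converges uniformly to $c_{j,\,d}(\beta)\,\nu_d(j)^{-1/2}P_j^{\,d}(\skalarprodukt{\theta}{\cdot})$. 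Incidentally, your LP shift $\gamma_\kappa(b)=\kappa\, c_{m,\,d}(\beta)\,\nu_d(m)^{-1}P_m^{\,d}(\skalarprodukt{\theta}{b})$ agrees with the paper's own proof, confirming that the $\kappa^2\lambda_m/\nu_d(m)$ in statement iii)(b) and the $\nu_m(2)$ in iii)(c) are typos in the proposition as printed.
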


\begin{proof}
\begin{enumerate}[label=\roman*)]
    \item
    By definition of the von Mises--Fisher distribution, see \eqref{von_mises_fisher_dichte}, we have for fixed $\theta \in \Sp$
    \begin{equation*}
        f(x \, | \, \kappa) = \frac{1}{a_d(\kappa)} \exp(\kappa \skalarprodukt{\theta}{x}), \quad x \in \Sp.
    \end{equation*}
    We compute for $\kappa > 0$
    \begin{align*}
        \text{KL}(\kappa, 0)
        &= \E_\kappa \left[ \log \left( \frac{f(U \, | \, \kappa)}{f(U \, | \, 0} \right) \right]
        = \int_{\Sp} \log \left( |\Sp| f(x \, | \, \kappa) \right) f(x \, | \, \kappa) \, d\sigma(x) \\[0.25cm]
        &= \frac{\kappa}{a_d(\kappa)} \int_{\Sp} \skalarprodukt{\theta}{x} \exp(\kappa \, \skalarprodukt{\theta}{x}) \, d\sigma(x) - \log a_d(\kappa) + \log |\Sp| \\[0.25cm]
        &= \kappa \frac{|\Sp[d-2]|}{a_d(\kappa)} \int_{\Sp} t e^{\kappa t} (1-t^2)^{(d-3)/2} \, dt - \log a_d(\kappa) + \log |\Sp|,
    \end{align*}
    where the last equality follows from the Funk--Hecke-Theorem for $P_0^d \equiv 1$. Moreover, we obtain by virtue of $\Gamma((d+1)/2) =  \Gamma((d-1)/2)(d-1)/2$ and an integration by parts
    \begin{align*}
        I_{d/2}(\kappa)
        &= \frac{(\kappa/2)^{d/2}}{\Gamma((d+1)/2) \Gamma(1/2)} \int_{-1}^1 e^{\kappa t} (1-t^2)^{(d-1)/2} \, dt \\[0.25cm]
        &= \frac{(\kappa/2)^{d/2-1}}{\Gamma((d-1)/2) \Gamma(1/2)} \int_{-1}^1 t e^{\kappa t} (1-t^2)^{(d-3)/2} \, dt,
    \end{align*}
    so that statement a) follows from
    \begin{align*}
        \text{KL}(\kappa, 0)
        &= \frac{|\Sp[d-2]|}{a_d(\kappa)} \frac{\Gamma((d-1)/2) \Gamma(1/2)}{(\kappa/2)^{d/2-1}} I_{d/2}(\kappa) \kappa - \log a_d(\kappa) + \log |\Sp| \\[0.25cm]
        &=  \frac{2 \pi^{d/2} \Gamma((d-1)/2)^{-1}}{2 \pi^{d/2} (\kappa/2)^{1-d/2} I_{d/2-1}(\kappa)} \frac{\Gamma((d-1)/2)}{(\kappa/2)^{d/2-1}} I_{d/2}(\kappa) \kappa - \log a_d(\kappa) + \log |\Sp| \\[0.25cm]
        &= \frac{I_{d/2}(\kappa)}{I_{d/2-1}(\kappa)} \kappa - \log a_d(\kappa) + \log |\Sp| \\[0.25cm]
        &= A_d(\kappa) \kappa - \log a_d(\kappa) +  \log |\Sp|.
    \end{align*}
    For statement b) we observe for $\kappa > 0$ and $b \in \Sp$
    \begin{align*}
        \E_\kappa (\skalarprodukt{b}{U})^{\, \beta}
        &= \frac{1}{a_d(\kappa)} \int_{\Sp} (\skalarprodukt{b}{x})^{\, \beta} \exp(\kappa \, \skalarprodukt{\theta}{x}) \, \text{d}\sigma(x) \\[0.25cm]
        &= \frac{1}{a_d(\kappa)} \sum_{j=0}^\beta c_{j,\, d}(\beta) \int_{\Sp} P_j^d(\skalarprodukt{b}{x}) \exp(\kappa \, \skalarprodukt{\theta}{x}) \, \text{d}\sigma(x),
    \end{align*}
    where the last equality is due to \eqref{potenzen_legendre}. The function $\Sp \ni x \mapsto P_j^d(\skalarprodukt{b}{x})$ is a $d$-dimensional spherical harmonic of order $j$, so that, once again, the Funk--Hecke-Theorem yields
    \begin{align*}
        \E_\kappa (\skalarprodukt{b}{U})^{\, \beta}
        &= \frac{|\Sp[d-2]|}{a_d(\kappa)} \sum_{j=0}^\beta c_{j,\, d}(\beta) P_j^d(\skalarprodukt{\theta}{b}) \int_{-1}^1 P_j^d(t) e^{\kappa t} (1-t^2)^{(d-3)/2} \, \text{d}t \\[0.25cm]
        &= \frac{|\Sp[d-2]|}{a_d(\kappa)} \sum_{j=0}^\beta c_{j,\, d}(\beta) P_j^d(\skalarprodukt{\theta}{b}) \sum_{l=0}^\infty \frac{\kappa^l}{l!} \Delta_j(l) \\[0.25cm]
        &= \frac{|\Sp[d-2]|}{a_d(\kappa)} \sum_{l=0}^\infty \frac{\kappa^l}{l!} \sum_{j=0}^\beta c_{j, \, d}(\beta) \Delta_j(l) P_j^d(\skalarprodukt{\theta}{b}).
    \end{align*}
    Since $\gamma_\kappa(b) = \E_\kappa (\skalarprodukt{b}{U})^{\, \beta} - \psi_d(\beta)$, formula b) follows. In particular, Proposition \ref{legendre_orthogonal} and the examples after Proposition \ref{c_j,d(m) allgemein} for $j = 1, \ldots , \beta$ yield
    \begin{align} \label{Delta_0_1_2}
        &\Delta_j(0)
        = \left\langle P_j^{\, d}, P_0^{\, d} \right\rangle
        = \delta_{j \, 0} \frac{|\Sp|}{\nu_d(0) \, |\Sp[d-2]|}
        = \delta_{j \, 0} \frac{|\Sp|}{|\Sp[d-2]|}, \\[0.25cm]
        &\Delta_j(1)
        = \left\langle P_j^{\, d}, P_1^{\, d} \right\rangle
        = \delta_{j \, 1} \frac{|\Sp|}{\nu_d(1) \, |\Sp[d-2]|}, \\[0.25cm]
        &\Delta_j(2)
        = \frac{1}{d} \left\langle P_j^{\, d}, P_0^{\, d} \right\rangle + \frac{d-1}{d} \left\langle P_j^{\, d}, P_2^{\, d} \right\rangle
        = \delta_{j \, 0} \frac{1}{d} \frac{|\Sp|}{|\Sp[d-2]|} + \delta_{j \, 2} \frac{d-1}{d} \frac{|\Sp|}{\nu_d(2) \, |\Sp[d-2]|}.
    \end{align}
    With the fact that $c_{0, \, d}(\beta) = \psi_d(\beta)$, see proof of Proposition \ref{K_beta_properties} i), a brief calculation gives
    \begin{equation} \label{gamma_kappa_split}
        \begin{aligned}
            \gamma_\kappa(b)
            &= \left( \frac{|\Sp|}{a_d(\kappa)} - 1 \right) \psi_d(\beta)
            + \kappa \frac{|\Sp|}{a_d(\kappa)} \frac{c_{1, \, d}(\beta)}{\nu_d(1)} P_1^d(\skalarprodukt{\theta}{b}) \\[0.25cm]
            &\hspace{0.5cm} + \frac{|\Sp[d-2]|}{a_d(\kappa)} \sum_{l=2}^\infty \frac{\kappa^l}{l!} \sum_{j=0}^\beta c_{j, \, d}(\beta) \Delta_j(l) P_j^d(\skalarprodukt{\theta}{b}).
        \end{aligned}
    \end{equation}
     In order to prove the last statement in i) we assume that $\kappa \in (0, R)$ for some $R>0$. This is no severe restrictions, since we are solely interested in the asymptotics near $0$. As the initial step we want to determine
    \begin{equation*}
        \limkappa \frac{\gamma_\kappa(b)}{\sqrt{2 \text{KL}(\kappa, 0)}}, \quad b \in \Sp.
    \end{equation*}
    For this purpose, we distinguish the cases $l=0$, $l=1$ and $l \geq 2$ as in equation \eqref{gamma_kappa_split}.

    Consider $l=0$ and define $\alpha_d(\kappa) = \left( \frac{|\Sp|}{a_d(\kappa)} - 1 \right)$. With \eqref{a_d_grenzwert} we realize that $\limkappa \alpha_d(\kappa) = 0$. With an application of de L'Hospital's rule, we calculate
    \begin{align*}
        \limkappa \frac{2 \text{KL}(\kappa,0)}{\alpha_d^2(\kappa)}
        &= \limkappa \frac{2 \left[ A_d(\kappa) \kappa -\log a_d(\kappa) + \log |\Sp| \right]}{\alpha_d^2(\kappa)}
        = \limkappa \frac{2 [ A_d'(\kappa)\kappa + A_d(\kappa) - A_d(\kappa) ]}{2 \alpha_d'(\kappa) \alpha_d(\kappa)} \\[0.25cm]
        &= \frac{1}{|\Sp|} \limkappa \frac{A_d'(\kappa) \kappa}{- \cfrac{ a_d'(\kappa)}{a_d^2(\kappa)}  \alpha_d(\kappa)} =
        \frac{1}{|\Sp|} \limkappa \frac{A_d'(\kappa) \kappa}{\cfrac{A_d(\kappa)}{a_d^2(\kappa)} \left[ a_d(\kappa) - |\Sp| \right]} \\[0.25cm]
        &= \frac{1}{|\Sp|} \limkappa \frac{a_d^2(\kappa) \left[ \cfrac{\kappa}{A_d(\kappa)} - A_d(\kappa) \kappa - d + 1 \right]}{ a_d(\kappa) - |\Sp| } = \infty.
    \end{align*}
    Here, the second and the second to last equality follow from \eqref{log_a_d_ableitung}, and the last equation follows from \eqref{A_d_ableitung}. Then, the enumerator converges to $|\Sp|^2$ due to $ \limkappa A_d(\kappa) \kappa = 0$ and with \eqref{A_d_kappa_klein} due to
    \begin{equation*}
        \limkappa \frac{A_d(\kappa)}{\kappa} = \limkappa \left( \frac{1}{d} - \frac{\kappa^2}{d^2 (d+2)} + O(\kappa^4) \right) = \frac{1}{d}.
    \end{equation*}
    On the other hand, the denominator converges to $0^+$ due to \eqref{a_d_grenzwert}. Hence, it follows for $l=0$
    \begin{equation*}
        \limkappa \frac{ \left( \cfrac{|\Sp|}{a_d(\kappa)} - 1 \right) \psi_d(\beta) }{\sqrt{2 \text{KL}(\kappa, 0)}}
        = - \limkappa \frac{\psi_d(\beta)}{\sqrt{\cfrac{2 \text{KL}(\kappa,0)}{\alpha_d^2(\kappa)}}} = 0.
    \end{equation*}
    Consider $l=1$. We have
    \begin{equation*}
        \limkappa \frac{ \kappa }{\sqrt{2 \text{KL}(\kappa, 0)}}
        = \limkappa \frac{1}{\sqrt{ \cfrac{2 \left[ A_d(\kappa) \kappa -\log a_d(\kappa) + \log |\Sp| \right]}{\kappa^2} }}
        = \sqrt{d}.
    \end{equation*}
    De L'Hospital's rule and \eqref{A_d_ableitung} yield
    \begin{align*}
        \limkappa \frac{2 \left[ A_d(\kappa) \kappa -\log a_d(\kappa) + \log |\Sp| \right]}{\kappa^2}
        &= \limkappa A_d'(\kappa) \\[0.25cm]
        &= \limkappa \left( 1 - A_d^2(\kappa) - \frac{d-1}{\kappa} A_d(\kappa) \right) = \frac{1}{d}.
    \end{align*}
    Together with $\nu_d(1) = d$ it follows that
    \begin{equation*}
        \limkappa \frac{ \kappa }{\sqrt{2 \text{KL}(\kappa, 0)}}  \frac{|\Sp|}{a_d(\kappa)} \frac{c_{1, \, d}(\beta)}{\nu_d(1)} P_1^d(\skalarprodukt{\theta}{b}) = \frac{c_{1, \, d}(\beta)}{\sqrt{\nu_d(1)}} P_1^d(\skalarprodukt{\theta}{b}), \quad b \in \Sp.
    \end{equation*}
    \\
    Consider $l \geq 2$. Then we immediately have
    \begin{equation*}
        \limkappa \frac{ \kappa^l }{\sqrt{2 \text{KL}(\kappa, 0)}} = \limkappa \frac{ \kappa }{\sqrt{2 \text{KL}(\kappa, 0)}} \kappa^{l-1} = 0,
    \end{equation*}
    because the first factor is bounded. For the following, we define the function series
    \begin{equation*}
        S_N: (0, R) \times \Sp \to \R; \quad
        (\kappa, b) \mapsto \sum_{l=2}^N \frac{1}{l!} \frac{\kappa^l}{\sqrt{2 \text{KL}(\kappa, 0)}} \sum_{j=0}^\beta c_{j, \, d}(\beta) \Delta_j(l) P_j^d(\skalarprodukt{\theta}{b})
    \end{equation*}
    for $N \in \N$, $N \geq 2$. For fixed $\kappa \in (0, R)$, the series is continuous on $\Sp$. Furthermore, it converges uniformly on $(0, R)$ as $N \to\infty$ and for fixed $b \in \Sp$, since for $\kappa \in (0, R)$ we have
    \begin{equation*}
        \frac{ \kappa^l }{\sqrt{2 \text{KL}(\kappa, 0)}} = \frac{ \kappa }{\sqrt{2 \text{KL}(\kappa, 0)}} \kappa^{l-1} \leq \max \{ C, R \}^l
    \end{equation*}
    with a constant $C > 0$ bounding $\kappa/\sqrt{2 \text{KL}(\kappa, 0)}$ from above. Due to $|\, P_j^d \,| \leq 1$, see Proposition \ref{legendre_schranke}, and $| \Delta_j(l) | \leq |\Sp| / |\Sp[d-2]|$ for $j = 1, \ldots, \beta$, $l \in \N_0$, it then follows
    \begin{align*}
        \left| \sum_{l=2}^\infty \frac{1}{l!} \frac{\kappa^l}{\sqrt{2 \text{KL}(\kappa, 0)}} \sum_{j=0}^\beta c_{j, \, d}(\beta) \Delta_j(l) P_j^d(\skalarprodukt{\theta}{b}) \right|
        \leq \beta \max_{j=1,\ldots,\beta} |c_{j, \, d}(\beta)| \frac{|\Sp|}{|\Sp[d-2]|} \sum_{l=2}^\infty \frac{\max \{ C, R \}^l}{l!},
    \end{align*}
    and the last series converges. The uniform convergence yields for $b \in \Sp$
    \begin{align*}
        \limkappa \frac{\gamma_\kappa(b)}{\sqrt{2 \text{KL}(\kappa, 0)}}
        &= \limkappa \left( \frac{ \left( \cfrac{|\Sp|}{a_d(\kappa)} - 1 \right) \psi_d(\beta) }{\sqrt{2 \text{KL}(\kappa, 0)}} + \frac{ \kappa }{\sqrt{2 \text{KL}(\kappa, 0)}}  \frac{|\Sp|}{a_d(\kappa)} \frac{c_{1, \, d}(\beta)}{\nu_d(1)} P_1^d(\skalarprodukt{\theta}{b}) \right) \\[0.25cm]
        &\hspace{0.5cm} + \limkappa \frac{|\Sp[d-2]|}{a_d(\kappa)} \sum_{l=2}^\infty \frac{1}{l!} \frac{\kappa^l}{\sqrt{2 \text{KL}(\kappa, 0)}} \sum_{j=0}^\beta c_{j, \, d}(\beta) \Delta_j(l) P_j^d(\skalarprodukt{\theta}{b}) \\[0.25cm]
        &= 0 + \frac{c_{1, \, d}(\beta)}{\sqrt{\nu_d(1)}} P_1^d(\skalarprodukt{\theta}{b}) + 0
        = \frac{c_{1, \, d}(\beta)}{\sqrt{\nu_d(1)}} P_1^d(\skalarprodukt{\theta}{b}).
    \end{align*}
    We may see here, especially, that $\gamma_\kappa(\cdot)/\sqrt{2 \text{KL}(\kappa, 0)}$ converges pointwise to $c_{1, \, d}(\beta)/\sqrt{\nu_d(1)} P_1^d(\skalarprodukt{\theta}{\cdot})$ for $\kappa \to0^+$. As a matter of fact, the convergence is even uniform on $\Sp$, because $|\, P_j^d \,| \leq 1$. Thus we can finally conclude
    \begin{align*}
        \limkappa \frac{\spheremax \gamma_\kappa^2(b)}{2 \textsc{KL}(\kappa, 0)}
        &= \limkappa \spheremax \left( \frac{\gamma_\kappa(b)}{\sqrt{2 \text{KL}(\kappa, 0)}} \right)^2
        = \spheremax \left( \limkappa \frac{\gamma_\kappa(b)}{\sqrt{2 \text{KL}(\kappa, 0)}} \right)^2 \\[0.25cm]
        &= \frac{(c_{1, \, d}(\beta))^2}{\nu_d(1)} \spheremax |\, P_1^d(\skalarprodukt{\theta}{b}) \,|^2 = \lambda_1 \nu_d(1).
    \end{align*}
    Here, the last equality is due to \eqref{eq:eigenvalues} and the fact that $\spheremax |\, P_1^d(\skalarprodukt{\theta}{b}) \,|^2 = 1$, see Remark \ref{legendre_schranke}.
    \item
    In the case of a Watson alternative, we are not going to perform such a detailed proof as before since the relevant quantities are quite similar to those in i) and, otherwise, many arguments would be repeated. Recall that the density of a Watson distribution with fixed parameter $\theta \in \Sp$ is given by
    \begin{equation*}
        f(x \, | \, \kappa) = \frac{1}{d_d(\kappa)} \exp(\kappa \skalarprodukt{\theta}{x}), \quad x \in \Sp.
    \end{equation*}
    Let $\kappa > 0$. First of all, the Kullback--Leibler information number satisfies
    \begin{align*}
        \text{KL}(\kappa, 0)
        = \kappa \frac{|\Sp[d-2]|}{d_d(\kappa)} \int_{\Sp} t^2 \exp(\kappa t^2) (1-t^2)^{(d-3)/2} \, \text{d}t - \log d_d(\kappa) + \log |\Sp|.
    \end{align*}
    By the definition of the Kummer function
    \begin{align*}
        M(3/2, d/2 + 1, \kappa)
        &= \frac{1}{B(3/2, (d-1)/2)} \int_{-1}^1 t^2 e^{\kappa t^2} (1-t^2)^{(d-3)/2} \, \text{d}t \\[0.25cm]
        &= \frac{d \, \Gamma(d/2)}{\sqrt{\pi} \, \Gamma((d-1)/2)} \int_{-1}^1 t^2 e^{\kappa t^2} (1-t^2)^{(d-3)/2} \, \text{d}t,
    \end{align*}
    so that after a short calculation it follows that
    \begin{align*}
        \text{KL}(\kappa, 0)
        &= \kappa \frac{|\Sp[d-2]|}{d_d(\kappa)} \frac{\sqrt{\pi} \, \Gamma((d-1)/2)}{d \, \Gamma(d/2)} M(3/2, d/2 + 1, \kappa) - \log d_d(\kappa) + \log |\Sp| \\[0.25cm]
        &= D_d(\kappa) \kappa - \log d_d(\kappa) +  \log |\Sp|.
    \end{align*}
    For $\gamma_\kappa$ the exact same arguments as in i) yield the formula
    \begin{equation*}
        \gamma_\kappa(b) = \frac{|\Sp[d-2]|}{d_d(\kappa)} \sum_{l=0}^\infty \frac{\kappa^l}{l!} \sum_{j=0}^\beta c_{j, \, d}(\beta) \Delta_j(2l) P_j^d(\skalarprodukt{\theta}{b}) - \psi_d(\beta), \quad b \in \Sp.
    \end{equation*}
    Therefore,
    \begin{equation*}
        \begin{aligned}
            \gamma_\kappa(b)
            &= \left( \frac{|\Sp|}{d_d(\kappa)} - 1 \right) \psi_d(\beta)
            + \kappa \frac{|\Sp[d-2]|}{d_d(\kappa)} \sum_{j=0}^\beta c_{j, \, d}(\beta) \Delta_j(2) P_j^d(\skalarprodukt{\theta}{b}) \\[0.25cm]
            &\hspace{0.5cm} + \frac{|\Sp[d-2]|}{d_d(\kappa)} \sum_{l=2}^\infty \frac{\kappa^l}{l!} \sum_{j=0}^\beta c_{j, \, d}(\beta) \Delta_j(2l) P_j^d(\skalarprodukt{\theta}{b}) \\[0.25cm]
            &=
            \left( \frac{|\Sp|}{d_d(\kappa)} - 1 \right) \psi_d(\beta)
            + \kappa \frac{|\Sp|}{d_d(\kappa)} \left[ \frac{\psi_d(\beta)}{d} + \frac{c_{2, \, d}(\beta)}{\nu_d(2)} \frac{d-1}{d} P_2^d(\skalarprodukt{\theta}{b}) \right] \\[0.25cm]
            &\hspace{0.5cm} + \frac{|\Sp[d-2]|}{d_d(\kappa)} \sum_{l=2}^\infty \frac{\kappa^l}{l!} \sum_{j=0}^\beta c_{j, \, d}(\beta) \Delta_j(2l) P_j^d(\skalarprodukt{\theta}{b}).
        \end{aligned}
    \end{equation*}
    The last equality is due to \eqref{Delta_0_1_2}. Again we distinguish the cases $l=0$, $l=1$ and $l \geq 2$ and assume that $\kappa > 0$ is sufficiently small. Consider $l=0$ and define $\delta_d(\kappa) = \left( \frac{|\Sp|}{d_d(\kappa)} - 1 \right)$. Then $\delta_d(\kappa) = \left( \frac{1}{M(1/2, d/2, \kappa)} - 1 \right) < 0$ by \eqref{d_d_negativ}. Thus we calculate
    \begingroup
    \allowdisplaybreaks
    \begin{align*}
        \limkappa \frac{2 \text{KL}(\kappa,0)}{\left(1 - \cfrac{1}{M(1/2, d/2, \kappa)} \right)^2}
        &= \limkappa \frac{2 \left[ D_d(\kappa) \kappa -\log d_d(\kappa) + \log |\Sp| \right]}{\left(1 - \cfrac{1}{M(1/2, d/2, \kappa)} \right)^2} \\[0.25cm]
        &= \limkappa \frac{D_d'(\kappa) \kappa}{\cfrac{M'(1/2, d/2, \kappa)}{M^2(1/2, d/2, \kappa)} \cfrac{M(1/2, d/2, \kappa) - 1}{M(1/2, d/2, \kappa)}} \\[0.25cm]
        &= \limkappa M^2(1/2, d/2, \kappa) \frac{D_d'(\kappa) \kappa}{D_d(\kappa) (M(1/2, d/2, \kappa) - 1)} \\[0.25cm]
        &= \limkappa \frac{D_d'(\kappa)}{D_d(\kappa)} \limkappa \frac{\kappa}{M(1/2, d/2, \kappa) - 1} \\[0.25cm]
        &= \limkappa \frac{D_d'(\kappa)}{D_d(\kappa)} \limkappa \frac{1}{M'(1/2, d/2, \kappa)}.
    \end{align*}
    \endgroup
    Here, the second and the last equality are due to de L'Hospital's rule, and the third and fourth equality follow from \eqref{log_d_d_ableitung} and \eqref{kummer_grenzwert}, respectively. By \eqref{kummer_ableitung}, \eqref{kummer_grenzwert}, \eqref{D_d_kappa_klein} and \eqref{D_d_ableitung_grenzwert}, it follows that
    \begin{equation*}
        \limkappa \frac{ \left( \cfrac{|\Sp|}{d_d(\kappa)} - 1 \right) \psi_d(\beta) }{\sqrt{2 \text{KL}(\kappa, 0)}}
        = - \limkappa \frac{ \psi_d(\beta) }{\sqrt{\cfrac{2 \text{KL}(\kappa,0)}{\left(1 - \cfrac{1}{M(1/2, d/2, \kappa)} \right)^2}}}
        = - \frac{\psi_d(\beta)}{\sqrt{2}} \sqrt{\frac{d+2}{d-1}}.
    \end{equation*}
    If $l=1$, then
    \begin{equation*}
        \limkappa \frac{ \kappa }{\sqrt{2 \text{KL}(\kappa, 0)}}
        = \limkappa \frac{1}{\sqrt{ \cfrac{2 \left[ D_d(\kappa) \kappa -\log d_d(\kappa) + \log |\Sp| \right]}{\kappa^2} }}
        = \frac{d}{\sqrt{2}} \sqrt{\frac{d+2}{d-1}},
    \end{equation*}
    since
    \begin{equation*}
        \limkappa \frac{2 \left[ D_d(\kappa) \kappa -\log a_d(\kappa) + \log |\Sp| \right]}{\kappa^2}
        = \limkappa D_d'(\kappa)
        = \frac{2(d-1)}{d^2(d+2)},
    \end{equation*}
    where the last equation is due to \eqref{D_d_ableitung_grenzwert}.
    The expressions for $l \geq 2$ vanish for $\kappa \to0^+$ by the same argument as in i).

    These results yield for $b \in \Sp$
    \begingroup
    \allowdisplaybreaks
    \begin{align*}
        \limkappa \frac{\gamma_\kappa(b)}{\sqrt{2 \text{KL}(\kappa, 0)}}
        &= \limkappa \frac{ \left( \cfrac{|\Sp|}{d_d(\kappa)} - 1 \right) \psi_d(\beta) }{\sqrt{2 \text{KL}(\kappa, 0)}} \\[0.25cm]
        &\hspace{0.5cm} + \limkappa \frac{ \kappa }{\sqrt{2 \text{KL}(\kappa, 0)}}  \frac{|\Sp|}{d_d(\kappa)} \left[ \frac{\psi_d(\beta)}{d} + \frac{c_{2, \, d}(\beta)}{\nu_d(2)} \frac{d-1}{d} P_2^d(\skalarprodukt{\theta}{b}) \right]  \\[0.25cm]
        &\hspace{0.5cm} + \limkappa \frac{|\Sp[d-2]|}{d_d(\kappa)} \sum_{l=2}^\infty \frac{1}{l!} \frac{\kappa^l}{\sqrt{2 \text{KL}(\kappa, 0)}} \sum_{j=0}^\beta c_{j, \, d}(\beta) \Delta_j(2l) P_j^d(\skalarprodukt{\theta}{b}) \\[0.25cm]
        &= - \frac{\psi_d(\beta)}{\sqrt{2}} \sqrt{\frac{d+2}{d-1}} + \frac{d}{\sqrt{2}} \sqrt{\frac{d+2}{d-1}} \left[ \frac{\psi_d(\beta)}{d} + \frac{c_{2, \, d}(\beta)}{\nu_d(2)} \frac{d-1}{d} P_2^d(\skalarprodukt{\theta}{b}) \right] + 0 \\[0.25cm]
        &= \sqrt{\frac{(d+2)(d-1)}{2}} \frac{c_{2, \, d}(\beta)}{\nu_d(2)} P_2^d(\skalarprodukt{\theta}{b})
        = \frac{c_{2, \, d}(\beta)}{\sqrt{\nu_d(2)}} P_2^d(\skalarprodukt{\theta}{b}).
    \end{align*}
    \endgroup
    Using \eqref{eq:eigenvalues} once again, it follows that
     \begin{align*}
        \limkappa \frac{\spheremax \gamma_\kappa^2(b)}{2 \textsc{KL}(\kappa, 0)}
        &= \limkappa \spheremax \left( \frac{\gamma_\kappa(b)}{\sqrt{2 \text{KL}(\kappa, 0)}} \right)^2
        = \spheremax \left( \limkappa \frac{\gamma_\kappa(b)}{\sqrt{2 \text{KL}(\kappa, 0)}} \right)^2 \\[0.25cm]
        &= \frac{(c_{2, \, d}(\beta))^2}{\nu_d(2)} \spheremax |\, P_2^d(\skalarprodukt{\theta}{b}) \,|^2 = \lambda_2 \nu_d(2).
    \end{align*}
    \item
    Recall the density
    \begin{equation*}
        f(x \, | \, \kappa) = \frac{1}{|\Sp|} \left( 1 + \kappa P_m^d(\skalarprodukt{\theta}{x}) \right), \quad x \in \Sp,
    \end{equation*}
     of the Legendre polynomial alternative with fixed $m \in \N$ and $\theta \in \Sp$.
    We calculate for $\kappa \in [0,1]$
    \begin{align*}
        \text{KL}(\kappa, 0)
        &= \frac{1}{|\Sp|} \int_{\Sp} \log \left( 1 + \kappa P_m^d(\skalarprodukt{\theta}{x}) \right) \left( 1 + \kappa P_m^d(\skalarprodukt{\theta}{x}) \right) \, \text{d}\sigma(x) \\[0.25cm]
        &= \frac{|\Sp[d-2]|}{|\Sp|} \int_{-1}^1 \log \left( 1 + \kappa P_m^d(t) \right) \left( 1 + \kappa P_m^d(t) \right) (1-t^2)^{(d-3)/2} \, \text{d}t.
    \end{align*}
    A Taylor expansion of order 1 of $t \mapsto \log(1+t)$ around $0$ yields
    \begin{align*}
        \text{KL}(\kappa, 0)
        &= \frac{|\Sp[d-2]|}{|\Sp|} \int_{-1}^1 \left( \kappa P_m^d(t) - \frac{\left( \kappa P_m^d(t) \right)^2}{2 (1 + \xi_\kappa)^2} \right) \left( 1 + \kappa P_m^d(t) \right) (1-t^2)^{(d-3)/2} \, \text{d}t \\[0.25cm]
        &= \frac{|\Sp[d-2]|}{|\Sp|} \left[ \kappa \left\langle P_m^{\, d}, P_0^{\, d} \right\rangle + \kappa^2 \left\langle P_m^{\, d}, P_m^{\, d} \right\rangle - \frac{\kappa^2}{2 (1 + \xi_\kappa)^2} \left\langle P_m^{\, d}, P_m^{\, d} \right\rangle - \frac{\kappa^3}{2 (1 + \xi_\kappa)^2} \left\langle \left( P_m^{\, d} \right)^2, P_m^{\, d} \right\rangle  \right] \\[0.25cm]
        &= \frac{\kappa^2}{\nu_d(m)} \left( 1 - \frac{1}{2 (1 + \xi_\kappa)^2} \right) -  \frac{\kappa^3}{2(1+\xi_\kappa)^2} \frac{|\Sp[d-2]|}{|\Sp|} \int_{-1}^1 \left( P_m^{\, d}(t) \right)^3 (1-t^2)^{(d-3)/2} \, \text{d}t.
    \end{align*}
    Here, $\xi_\kappa$ is an intermediate point, which satisfies $|\xi_\kappa| \leq \kappa |\, P_m^d(t) \,| \leq \kappa$ for $t \in [-1,1]$.
    Therefore, we have
    \begin{equation*}
        \text{KL}(\kappa, 0) = \frac{\kappa^2}{\nu_d(m)} \left( 1 - \frac{1}{2 (1 + \xi_\kappa)^2} \right) + O(\kappa^3), \quad \kappa \to0^+.
    \end{equation*}
    For $\gamma_\kappa$ we compute for $b \in \Sp$
    \begingroup
    \allowdisplaybreaks
    \begin{align*}
        \E_\kappa (\skalarprodukt{b}{U})^{\, \beta}
        &= \frac{1}{|\Sp|} \int_{\Sp} (\skalarprodukt{b}{x})^{\, \beta} \left( 1 + \kappa P_m^d(\skalarprodukt{\theta}{x}) \right) \, d\sigma(x) \\[0.25cm]
        &= \psi_d(\beta) + \kappa P_m^d(\skalarprodukt{\theta}{b}) \frac{|\Sp[d-2]|}{|\Sp|} \int_{-1}^1 P_m^{\, d}(t) t^{\, \beta} (1-t^2)^{(d-3)/2} \, dt \\[0.25cm]
        &= \psi_d(\beta) + \kappa P_m^d(\skalarprodukt{\theta}{b}) \frac{|\Sp[d-2]|}{|\Sp|} \sum_{j=0}^\beta c_{j, \, d}(\beta) \left\langle P_j^{\, d}, P_m^{\, d} \right\rangle \\[0.25cm]
        &= \psi_d(\beta) + \kappa \frac{c_{m, \, d}(\beta)}{\nu_d(m)} P_m^d(\skalarprodukt{\theta}{b}),
    \end{align*}
    \endgroup
    where the second and third equality are due to \eqref{eq:ewert} and \eqref{potenzen_legendre}, respectively.
    Hence,
    \begin{equation*}
        \spheremax \gamma_\kappa^2(b) = \kappa^2 \lambda_m \spheremax |\, P_m^d(\skalarprodukt{\theta}{b}) \,|^2 = \kappa^2 \lambda_m.
    \end{equation*}
    In view of these results and the fact, that $\limkappa \xi_\kappa = 0$, we conclude
    \begin{align*}
        \limkappa \frac{\spheremax \gamma_\kappa^2(b)}{2 \textsc{KL}(\kappa, 0)}
        &= \limkappa \frac{\kappa^2 \lambda_m}{\cfrac{2 \kappa^2}{\nu_d(m)} \left( 1 - \cfrac{1}{2 (1 + \xi_\kappa)^2} \right) + O(\kappa^3)} \\[0.25cm]
        &= \limkappa \cfrac{\lambda_m}{\cfrac{2}{\nu_d(m)} \left( 1 - \cfrac{1}{2 (1 + \xi_\kappa)^2} \right) + O(\kappa)}
        = \frac{\lambda_m}{\cfrac{2}{\nu_d(m)} \left( 1 - \cfrac{1}{2} \right)}
        = \lambda_m \nu_d(m).
    \end{align*}
\end{enumerate}
\end{proof}

\section{Proofs of main results} \label{sec:AppC}

\textbf{Proof of Theorem \ref{thm:asy_emp}.}

\begin{proof}
The proof uses the methods presented in the proof of Theorem 2.1 in \cite{BH:1991}, although the covariance structure is a generalization. Putting $W(b)=(b^\top U_1)^\beta-\psi_d(\beta),\,b\in\Sp,$ we have (using $x^\beta-y^\beta=(x-y)\sum_{j=0}^{\beta-1}x^jy^{\beta-1-j}$, $x,y\in\R$, $\beta\in\N$) with the Cauchy--Schwarz inequality
\begin{equation*}
|W(b)-W(c)|\le \beta \|b-c\|,\quad b,c\in\Sp,
\end{equation*}
and a direct application of the CLT in Banach spaces, see \cite{AG:1980}, Corollary 7.17, yields the claim. The Corollary is applicable, since the metric space $\left(\Sp,\|\cdot\|\right)$ clearly satisfies the stated entropy condition. To finish the proof it suffices to calculate the covariance structure $E(W(b)W(c))$ by taking advantage of Lemma \ref{lem:mean}.
\end{proof}

\newpage
\noindent
\textbf{Proof of Proposition \ref{K_beta_properties}.}

\begin{proof}
\begin{enumerate}[label=\roman*)]
    \item Let $\beta \in \N$ and $k \in \N_0$. Since $\rho_\beta(b,c) = Q(\skalarprodukt{b}{c})$ for all $b,c \in \Sp$, it follows by the Funk--Hecke-Theorem \ref{funk-hecke} for $x \in \Sp$ and $\phi \in \mathcal{H}_k(\Sp)$
    \begin{align*}
        K_\beta \phi(x)
        = \frac{1}{|\Sp|} \int_{\Sp} \rho_\beta(\omega, x) \phi(\omega) \, \text{d}\sigma(\omega)
        = \frac{1}{|\Sp|} \int_{\Sp} Q(\skalarprodukt{\omega}{x}) \phi(\omega) \, \text{d}\sigma(\omega)
        = \lambda_k \phi(x)
    \end{align*}
    with
    \begin{equation*}
        \lambda_k = \frac{|\Sp[d-2]|}{|\Sp|} \int_{-1}^1 P_k^{\, d}(t) Q(t) (1-t^2)^{(d-3)/2} dt,
    \end{equation*}
    where the expression for $\lambda_k$ is derived from the Funk--Hecke-Theorem. By Proposition \ref{legendre_orthogonal} we have the following orthogonality property of the Legendre polynomials w.r.t the scalar product in \eqref{legendre_scalarproduct}
    \begin{equation*}
        \left\langle P_k^d, P_j^d \right\rangle = \delta_{kj} \frac{|\Sp|}{\nu_d(k) \, |\Sp[d-2]|}, \quad \forall k, j \in \N_0.
    \end{equation*}
    In view of \eqref{eta} and \eqref{eq:covk} we have
    \begin{equation*}
        Q(t) = \sum_{j=0}^\beta \frac{(c_{j,\, d}(\beta))^2}{\nu_d(j)} P_j^{\, d}(t) - \psi_d^2(\beta), \quad t \in [-1,1].
    \end{equation*}
    We calculate for $k \leq \beta$ with $P_0^d\equiv 1$
    \begingroup
    \allowdisplaybreaks
    \begin{align*}
        \int_{-1}^1 P_k^{\, d}(t) Q(t) (1-t^2)^{(d-3)/2}
        &= \sum_{j=0}^\beta \frac{(c_{j,\, d}(\beta))^2}{\nu_d(j)} \left\langle P_k^d, P_j^d \right\rangle
        - \int_{-1}^1 P_k^{\, d}(t) \psi_d^2(\beta) (1-t^2)^{(d-3)/2} \text{d}t \\[0.25cm]
        &= \frac{|\Sp|}{|\Sp[d-2]|} \sum_{j=0}^\beta \left( \frac{c_{j,\, d}(\beta)}{\nu_d(j)} \right)^2 \delta_{kj}
        - \psi_d^2(\beta) \left\langle P_k^d, P_0^d \right\rangle \\[0.25cm]
        &= \frac{|\Sp|}{|\Sp[d-2]|} \left[ \left( \frac{c_{k,\, d}(\beta)}{\nu_d(k)} \right)^2 - \delta_{k0} \psi_d^2(\beta) \right].
    \end{align*}
    \endgroup
    Thereby we obtain
    \begin{equation*}
        \lambda_k
        = \frac{|\Sp[d-2]|}{|\Sp|} \int_{-1}^1 P_k^{\, d}(t) Q(t) (1-t^2)^{(d-3)/2} \text{d}t
        = \left( \frac{c_{k,\, d}(\beta)}{\nu_d(k)} \right)^2 - \delta_{k0} \psi_d^2(\beta), \quad k \leq \beta.
    \end{equation*}
    Evidently is $\lambda_k = 0$ for $k > \beta$ by the orthogonality of the Legendre polynomials. Consider the case $k=0$. With $\nu_d(0)=1$ we already know that
    \begin{equation*}
        \lambda_k = (c_{0,\, d}(\beta))^2 - \psi_d^2(\beta) = (c_{0,\, d}(\beta) - \psi_d(\beta))(c_{0,\, d}(\beta) + \psi_d(\beta)).
    \end{equation*}
    If $\beta$ is odd, then $\psi_d(\beta) = c_{0,\, d}(\beta) = 0$ due to \eqref{eta} and Proposition \ref{c_j,d(m) allgemein}. Thus let $\beta$ be even. Then the Funk--Hecke-Theorem for $1 \in \mathcal{H}_0(\Sp)$, equation \eqref{potenzen_legendre}, and the orthogonality of the Legendre polynomials yield
    \begin{align*}
        \psi_d(\beta)
        &= \E(\skalarprodukt{b}{U})^{\, \beta}
        = \frac{1}{|\Sp|} \int_{\Sp} (\skalarprodukt{b}{\omega})^{\, \beta}  \, \text{d}\sigma(\omega)
        = \frac{|\Sp[d-2]|}{|\Sp|} \int_{-1}^1 t^{\, \beta} (1-t^2)^{(d-3)/2} \, \text{d}t \\[0.25cm]
        &= \frac{|\Sp[d-2]|}{|\Sp|} \sum_{j=0}^\beta c_{j,\, d}(\beta) \int_{-1}^1 P_j^{\, d}(t) (1-t^2)^{(d-3)/2} \, \text{d}t
        = \frac{|\Sp[d-2]|}{|\Sp|} \sum_{j=0}^\beta c_{j,\, d}(\beta) \left\langle P_j^d, P_0^d \right\rangle \\[0.25cm]
        &= \frac{|\Sp[d-2]|}{|\Sp|} \sum_{j=0}^\beta c_{j,\, d}(\beta) \frac{\delta_{j \, 0} |\Sp|}{\nu_d(0) \, |\Sp[d-2]|} = c_{0,\, d}(\beta).
    \end{align*}
    \item
    Let $\{\phi_{k,1},\ldots,\phi_{k,\nu_d(k)}\}$ be an orthonormal basis of $\harmonicspace$. Then $\bigcup_{k=0}^\infty \{\phi_{k,1},\ldots,\phi_{k,\nu_d(k)}\}$ is an orthonormal basis of $L^2(\Sp, \text{d}\sigma)$ and we get the unique series expansion for $f \in L^2(\Sp, \text{d}\sigma)$
    \begin{equation} \label{f_harmonic_expansion}
        f \overset{L^2}{=} \sum_{k=0}^\infty \Psi_k,
    \end{equation}
    with $\Psi_k = \sum_{j=1}^{\nu_d(k)} \left\langle f, \phi_{k,j} \right\rangle_{L^2} \phi_{k,j} \in \harmonicspace$, compare with \eqref{L^2_harmonic_expansion}.
    Set $a_{k, j} = \left\langle f, \phi_{k,j} \right\rangle_{L^2}$ for all $j \in \{ 1,\ldots,\nu_d(k) \}$, $k \in \N_0$. It follows for $x \in \Sp$
    \begin{align*}
        K_\beta f(x)
        &= \frac{1}{|\Sp|} \int_{\Sp} \rho_\beta(\omega, x) f(\omega) \, \text{d}\sigma(\omega)
        = \frac{1}{|\Sp|} \left\langle \, \rho_{\beta}(\cdot, x), f \right\rangle_{L^2} \\[0.25cm]
        &= \frac{1}{|\Sp|} \sum_{k=0}^\infty \sum_{j=1}^{\nu_d(k)} a_{k,j} \left\langle \, \rho_{\beta}(\cdot, x), \phi_{k,j} \right\rangle_{L^2}
        \overset{i)}{=} \sum_{k=0}^\infty \sum_{j=1}^{\nu_d(k)} a_{k,j} \lambda_k \phi_{k,j}(x) \mathds{1}_{\{k \, \leq \, \beta\}} \\[0.25cm]
        &= \sum_{k=1}^\beta \lambda_k \sum_{j=1}^{\nu_d(k)} a_{k,j} \phi_{k,j}(x).
    \end{align*}
    Hence, $K_\beta$ is a finite-rank operator, and it has the representation
    \begin{equation} \label{K_beta_finite_rank}
        K_\beta f(x) = \sum_{k=1}^\beta \lambda_k \sum_{j=1}^{\nu_d(k)} a_{k,j} \phi_{k,j}(x), \quad x \in \Sp.
    \end{equation}
    \item
    Let $\lambda \in \C$, $ \lambda \notin \{ 0 \} \cup \{ \lambda_k \; | \; k = 1,\ldots,\beta \}$, and consider the equation $\lambda f - K_\beta f = 0$ für $f \in L^2(\Sp, \text{d}\sigma)$ as in \eqref{f_harmonic_expansion}. Observe for $i \in \{ 1,\ldots,\nu_d(m) \}$, $m \in \{ 1,\ldots,\beta \}$
    \begin{align*}
        \lambda \left\langle f, \phi_{m,i} \right\rangle_{L^2}
        &= \left\langle K_\beta f, \phi_{m,i} \right\rangle_{L^2}
        = \sum_{k=1}^\beta \lambda_k \sum_{j=1}^{\nu_d(k)} a_{k,j} \left\langle \phi_{k,j}, \phi_{m,i} \right\rangle_{L^2} \\
        &= \sum_{k=1}^\beta \lambda_k \sum_{j=1}^{\nu_d(k)} a_{k,j} \delta_{mk} \delta_{ij}
        = \sum_{k=1}^\beta \lambda_k \delta_{mk} a_{k,i}
        = \lambda_m a_{m,i} = \lambda_m \left\langle f, \phi_{m,i} \right\rangle_{L^2}.
    \end{align*}
    Since $\lambda \neq \lambda_m$, it immediately follows that $\left\langle f, \phi_{m,i} \right\rangle_{L^2} = 0$. Hence, $\left\langle f, \phi_{m,i} \right\rangle_{L^2} = 0$ for all $i \in \{ 1,\ldots,\nu_d(m) \}$, $m \in \{ 1,\ldots,\beta \}$ and therewith $K_\beta f = 0$. We conclude $\lambda f = 0$ and, since $\lambda \neq 0$, it follows that $f = 0$. This, in turn, means that $\lambda$ is a resolvent point.
    \item With $f \in L^2(\Sp, \text{d}\sigma)$ like in \eqref{f_harmonic_expansion} and by \eqref{K_beta_finite_rank} we conclude
    \begin{equation*}
        \skalarproduktLebesgue{K_\beta f}{f}
        = \sum_{k=1}^\beta \lambda_k \sum_{j=1}^{\nu_d(k)} a_{k,j} \skalarproduktLebesgue{\phi_{k,j}}{f}
        = \sum_{k=1}^\beta \lambda_k \sum_{j=1}^{\nu_d(k)} a_{k,j}^2 \geq 0,
    \end{equation*}
    since all eigenvalues are non-negative.
\end{enumerate}
\end{proof}

\noindent
\textbf{Proof of Proposition \ref{pro:AltD}.}

\begin{proof}
We prove the claim only for the case of $\beta$ odd as the other case can be done analogously. Since a centred Gaussian process is defined by the covariance kernel, we merely have to show that the covariance kernels of $Z_\beta(\cdot)$ and the process on the right-hand side, denoted by $Y(\cdot)$, coincide. Clearly, after some calculations we first see that $Y(\cdot)$ is a centred Gaussian process. Furthermore,
\begin{align*}
        \E Y(b) Y(c)
        &= |\Sp| \sum_{\substack{ k,m=1 \\ k,m \textrm{ odd}}}^\beta \sqrt{\lambda_k \lambda_m} \sum_{j=1}^{\nu_d(k)} \sum_{i=1}^{\nu_d(m)} \phi_{k,j}(b) \, \phi_{m,i}(c) \, \E N_{k,j} N_{m,i} \\
        &= |\Sp| \sum_{\substack{ k,m=1 \\ k,m \textrm{ odd}}}^\beta \sqrt{\lambda_k \lambda_m} \sum_{j=1}^{\nu_d(k)} \sum_{i=1}^{\nu_d(m)} \phi_{k,j}(b) \, \phi_{m,i}(c) \, \delta_{km} \delta_{ij} \\
        &= |\Sp| \sum_{\substack{ k=1 \\ k \textrm{ odd}}}^\beta \lambda_k \sum_{j=1}^{\nu_d(k)} \phi_{k,j}(b) \, \phi_{k,j}(c)
        = \rho_\beta(b,c),
\end{align*}
where the last equality follows from Mercer's theorem, see \cite{SCH:2010}, Theorem 2.10, which is applicable due to our previously obtained results. Note that the eigenvalues in \eqref{eq:eigenvalues} equal zero for even indices, if $\beta$ is odd, for the occurring constants $c_{k,\, d}(\beta)$ are zero in these cases. Compare also with Proposition \ref{c_j,d(m) allgemein}.
\end{proof}

\noindent
\textbf{Proof of Theorem \ref{thm:ben_alt}.}

\begin{proof}
Lemma \ref{lem:DQVK} yields by Le Cams first Lemma, see \cite{LV:2017} Proposition 5.2.1, that $\mathbb{P}^{(n)}$ and $\mathbb{A}^{(n)}$ are mutually contiguous. Straightforward calculations show for $b\in\Sp$ under $\mathbb{P}^{(n)}$
\begin{enumerate}[label=\roman*)]
\item $\displaystyle\lim_{n\rightarrow\infty}\mbox{Cov}\left(Z_{n,\beta}(b),\log L_n\right)=S^*_\beta(b)$,

\item for $l\in\N$, $a_1,\ldots,a_l\in\Sp$ the distribution of $(Z_{n,\beta}(a_1),\ldots,Z_{n,\beta}(a_l),\log L_n)^\top$ converges to
    \begin{equation*}
    \mbox{N}_{l+1}\left(\left(0,\ldots,0,-\frac{\tau^2}{2}\right)^\top,\left(\begin{array}{cc}\Sigma_{a_1,\ldots,a_l} & z \\ z^\top & \tau^2 \end{array}\right)\right),
    \end{equation*}
    where $\Sigma_{a_1,\ldots,a_l}=\left(\rho_\beta\left(a_{j_1},a_{j_2}\right)\right)_{1\le j_1,j_2\le l}$ is a $l\times l$-matrix,  $z=\left(S^*_\beta(a_1),\ldots,S^*_\beta(a_l)\right)^\top$ is a $l$-dimensional vector, and $\tau$ is defined in Lemma \ref{lem:DQVK}.
\end{enumerate}
Le Cam's third lemma shows that, under $\mathbb{A}^{(n)}$ the finite-dimensional distributions of $Z_{n,\beta}$ converge weakly to the corresponding distributions of the shifted Gaussian process $Z_\beta+S^*_\beta$. Since $Z_{n,\beta}$ is tight under $\mathbb{P}^{(n)}$ and $\mathbb{A}^{(n)}$ is contiguous to $\mathbb{P}^{(n)}$ we have tightness of $Z_{n,\beta}$ and $Z_{n,\beta}\cd Z_\beta+S^*_\beta$ under $\mathbb{A}^{(n)}$.
\end{proof}

\noindent
\textbf{Proof of Theorem \ref{T_n,beta_bahadur_steigung}.}

\begin{proof}
For the first statement, we use \cite{BAH:1971}, Theorem 7.2, and verify the two conditions therein for the case of the approximate Bahadur slope, compare with \cite{BAH:1967}, Section 6. First of all, due to the strong law of large numbers we have
\begin{equation*}
    \frac{1}{n} \standardsumme (\skalarprodukt{b}{U_j})^{\, \beta} - \psi_d(\beta) \overset{\mathcal{\Pb_\kappa} \textrm{-f.s.}}{\rightarrow} \gamma_\kappa(b), \quad b \in \Sp.
\end{equation*}
Hence we immediately obtain
\begin{equation*}
    \frac{\widetilde{T}_{n, \, \beta}}{\sqrt{n}} \overset{\mathcal{\Pb_\kappa}}{\rightarrow} \spheremax \gamma_\kappa^2(b), \quad \kappa > 0.
\end{equation*}
Under $H_0$, Corollary \ref{cor:H0} yields
\begin{equation*}
    \widetilde{T}_{n, \, \beta} = \sqrt{T_{n, \, \beta}} \weakconv \spheremax | \, Z_\beta(b) \, |.
\end{equation*}
With $F(t) = \Pb( \, \max_{b \in \Sp} | \, Z_\beta(b) \, | < t)$, $t \in \R$, an application of \cite{LT:2002}, Corollary 3.2, gives
\begin{align*}
     \lim_{t \to\infty} \frac{\log(1-F(t))}{t^2}
     &= \lim_{t \to\infty} \frac{\log \Pb\left( \, \max_{b \in \Sp} | \, Z_\beta(b) \, | \geq t \right)}{t^2} \\
     &= \lim_{t \to\infty} \frac{\log \Pb\left( \, \| \, Z_\beta(\cdot) \, \|_\infty \geq t \right)}{t^2} = - \frac{1}{2 \spheremax \rho_\beta(b,b)}.
\end{align*}
Thus, the approximate Bahadur slope is
\begin{equation*}
    c_{\widetilde{T}_\beta}^{\; a}(\kappa)
    = \frac{\spheremax \gamma_\kappa^2(b)}{\spheremax \rho_\beta(b,b)}, \quad \kappa > 0.
\end{equation*}
For the second statement, we utilize again Theorem 7.2 in \cite{BAH:1971} and prove the second condition for sufficiently small $\kappa > 0$ with the help of \cite{RAO:1972}, Lemma 2.1 and Lemma 2.2. Let us consider, once again, the $C(\Sp, \R)$-valued random elements $W_j(b) = (\skalarprodukt{b}{U_j})^{\, \beta} - \psi_d(\beta)$, $b \in \Sp$, which are centered under $H_0$ and due to their compact support fulfill the integrability condition of \cite{RAO:1972}, Lemma 2.1. Hence, with $F_n(t)=\Pb_0\left( \widetilde{T}_{n, \, \beta} < t \right)$, $t \in \R$, we obtain for sufficiently small $\epsilon > 0$
\begin{align*}
    \lim_{n \to\infty} \frac{\log(1-F_n(\sqrt{n}\epsilon))}{n}
    &= \lim_{n \to\infty} \frac{\log \Pb_0\left( \widetilde{T}_{n, \, \beta} \geq \sqrt{n} \epsilon \right)}{n} \\
    &= \lim_{n \to\infty} \frac{\log \Pb_0\left( \, \left\| \frac{1}{n} \sum_{j=1}^n W_j(\cdot) \, \right\|_\infty \geq \epsilon \right)}{n}
    = -\frac{\epsilon^2}{2 \underset{b \in \Sp}{\sup} \E W_1^2(b)} + o(\epsilon^2).
\end{align*}
According to the proof of Theorem \ref{thm:asy_emp}, $W_1(\cdot)$ defines the covariance kernel $\rho_\beta$ of the Gaussian process $Z_\beta(\cdot)$, so that indeed
\begin{equation*}
    \lim_{n \to\infty} \frac{\log(1-F_n(\sqrt{n}\epsilon))}{n}
     = -\frac{\epsilon^2}{2 \spheremax \rho_\beta(b,b)} + o(\epsilon^2)
\end{equation*}
for sufficiently small $\epsilon > 0$. Due to the assumed condition \eqref{density_L1_conv}, it follows that
\begin{align*}
    \left| \gamma_\kappa(b) \right|
    &= \left| \int_{\Sp} (\skalarprodukt{b}{x})^{\, \beta} f(x \, | \, \kappa) \, d\sigma(x) - \psi_d(\beta) \right|
    = \left| \int_{\Sp} (\skalarprodukt{b}{x})^{\, \beta} \left( f(x \, | \, \kappa) - f(x \, | \, 0) \right) \, d\sigma(x) \right|  \\
    &\leq || f(\cdot \, | \, \kappa) - f(\cdot \, | \, 0) ||_{L^1}
    \overset{\kappa \rightarrow 0^+}{\longrightarrow} 0.
\end{align*}
Lebesgue's dominated convergence theorem shows that $\gamma_\kappa \in C(\Sp; \R)$ for each $\kappa > 0$, and the last calculation justifies the uniform convergence of
$\gamma_\kappa$ against $0$ on $\Sp$ for $\kappa \rightarrow 0^+$. This implies $\limkappa \spheremax \gamma_\kappa^2(b) = 0$. Thus, the exact Bahadur slope is
\begin{equation*}
    c_{\widetilde{T}_\beta}(\kappa)
    = \frac{\spheremax \gamma_\kappa^2(b)}{\spheremax \rho_\beta(b,b)} + o \left( \spheremax \gamma_\kappa^2(b) \right)
\end{equation*}
for sufficiently small $\kappa > 0$. Finally, notice that for each $b \in \Sp$, we have
\begin{align*}
    \rho_\beta(b,b)
    &= \eta_\beta(b,b) - \psi_d^2(\beta)
    = \sum_{j=0}^\beta \frac{(c_{j,\, d}(\beta))^2}{\nu_d(j)} P_j^{\, d}(\skalarprodukt{b}{b}) - \psi_d^2(\beta)
    = \sum_{j=0}^\beta \frac{(c_{j,\, d}(\beta))^2}{\nu_d(j)} - \psi_d^2(\beta) \\
    &= \frac{(c_{0, \, d}(\beta))^2}{\nu_d(0)} + \sum_{j=1}^\beta \lambda_j \nu_d(j) - \psi_d^2(\beta)
    = \sum_{j=1}^\beta \lambda_j \nu_d(j).
\end{align*}
Here, the second and third equality follow from \eqref{eta} and Remark \ref{legendre_schranke}, respectively.
\end{proof}

\end{appendix}

\vspace{5mm}
\noindent
J. Borodavka, \\
Steinbuch Centre for Computing, \\
Karlsruhe Institute of Technology (KIT), \\
Zirkel 2, D-76131 Karlsruhe. \\
E-mail: \href{mailto:Jaroslav.Borodavka@kit.edu}{Jaroslav.Borodavka@kit.edu} \\[3mm]
B. Ebner, \\
Institute of Stochastics, \\
Karlsruhe Institute of Technology (KIT), \\
Englerstr. 2, D-76128 Karlsruhe. \\
E-mail: \href{mailto:Bruno.Ebner@kit.edu}{Bruno.Ebner@kit.edu} 
\end{document}